\def\titlerunning#1{\gdef\titrun{#1}}
\def\author#1{\gdef\autrun{\def\and{\unskip, }#1}\gdef\@author{#1}}
\def\address#1{{\def\and{\\\hspace*{18pt}}\renewcommand{\thefootnote}{}%
\footnote {#1}}%
\markboth{\autrun}{\titrun}}
\def\email#1{e-mail: #1}
\def\subjclass#1{{\renewcommand{\thefootnote}{}%
\footnote{\emph{Mathematics Subject Classification (2010):} #1}}}
\def\keywords#1{\par\medskip
\noindent\textbf{Keywords.} #1}
\newtheorem{thm}{Theorem}[section]
\newtheorem{cor}[thm]{Corollary}
\newtheorem{lemma}[thm]{Lemma}
\newtheorem{remark}[thm]{Remark}
\theoremstyle{definition}
\numberwithin{equation}{section}
\def\N{\mathbb {N}}
\def\R{\mathbb {R}}
\def\C{\mathbb {C}}
\def\B{\mathcal {B}}
\def\D{\mathbb {D}}
\def\eps{\varepsilon}
\def\ie{{\em i.e.,\ }}
\begin{document}


\baselineskip=17pt


\titlerunning{Upper and lower bounds for the correlation function via general return times}
\title{Upper and lower bounds for the correlation function via inducing with general return times}

\author{Henk Bruin and Dalia Terhesiu}

\date{\today}

\maketitle

\address{Faculty of Mathematics, University of Vienna, Oskar Morgensternplatz 1, 1090 Vienna, Austria; \email{henk.bruin@univie.ac.at}
\and
Faculty of Mathematics, University of Vienna, Oskar Morgensternplatz 1, 1090 Vienna, Austria; \email{daliaterhesiu@gmail.com}}

\subjclass{Primary 37A25; Secondary 37A40, 37A50, 37D25}


\begin{abstract}
For non-uniformly expanding maps inducing with a general return
time to Gibbs Markov maps, we provide sufficient
 conditions for obtaining higher order asymptotics for the correlation
function in the infinite measure setting. 
Along the way, we show that these conditions are sufficient
to recover previous results on sharp mixing rates in the finite measure
setting for non-Markov maps, but for a larger class of observables.
The results are illustrated by (finite and infinite measure preserving)
non-Markov intervals maps with an indifferent fixed point.
\end{abstract}


\keywords{rates of mixing,  renewal theory, inducing, Markov towers}


\section{Introduction}

For infinite measure preserving systems $(X, f, \mu)$, first order asymptotics of the correlation function $\rho_n(v,w) = \int_X v w \circ f^n \, d\mu$
for suitable observables $v, w$ were obtained in ~\cite{Gouezel11, MT} via the method of operator renewal theory.
The method and techniques in~\cite{Gouezel11, MT}  rely on the existence of some $Y\subset X$ such that the
\emph{first return} map $f^\tau: Y\to Y$ (with {\em first return time} $\tau(y)=\inf\{n\ge1:f^ny\in Y\}$) 
satisfies several good properties. In particular, given that $\mu_\tau$ is  the $f^\tau$-invariant measure, it is essential that $\mu_\tau(y\in Y:\tau(y)>n)=\ell(n)n^{-\beta}$, where  $\ell$ is a slowly varying
function\footnote{We recall that a measurable
function $\ell:(0,\infty)\to(0,\infty)$ is slowly varying if $\lim_{x\to\infty}\ell(\lambda x)/\ell(x)=1$ for all $\lambda>0$. Examples of slowly varying functions are the asymptotically constant functions and the logarithm.} and
$\beta\in(0,1]$. The strong requirement on the asymptotic behavior of $\mu_\tau(\tau>n)$ originates in the works~\cite{GL, Erickson} which provide first order asymptotics for scalar renewal sequences.

As clarified in~\cite{MT}, in the infinite measure setting, higher order asymptotics of the correlation function $\rho_n(v,w)$ (for suitable observables $v, w$) can be obtained 
exploiting higher order expansion of the tail probability $\mu_\tau(\tau>n)$, 
which can be estimated when the invariant density of the first return map $f^\tau$ is smooth enough. The work~\cite{MT} obtains results on  higher order asymptotics of $\rho_n(v,w)$
associated with infinite measure systems that induce with \emph{first return} to Gibbs Markov maps (see Section~\ref{sec-towers} for
details); in particular, the results in~\cite{MT} (and later~\cite{Terhesiu12} which improves on~\cite{MT}) apply to the LSV family of maps considered in~\cite{LiveraniSaussolVaienti99}.
A return map with smooth invariant density may be obtained by inducing with a 
\emph{general return}, rather than a \emph{first return} time.
More precisely, even when the first return map is not Gibbs Markov, it might happen that there exists some {\em general} return time $\varphi:Y\to\N$ of $f$ to $Y$, 
such that $f^\varphi = (f^\tau)^\rho$ is Gibbs Markov
for some  {\em reinduce time} $\rho:Y \to \N$.
 A good example is the class of non-Markov maps with indifferent fixed points studied in~\cite{Zweimuller98,Zweimuller00}. 
 Higher order asymptotics of the correlation function $\rho_n$ for the infinite measure preserving systems studied in~\cite{Zweimuller98,Zweimuller00} has not been addressed yet.
It is mainly
the question of  higher order asymptotics of $\rho_n$ for
\emph{infinite} measure preserving systems that induce with a \emph{general return} time to Gibbs Markov maps that we answer in this paper. 
While focusing on this problem we also obtain some new results for \emph{finite} measure preserving systems  that induce with,  again,
a \emph{general return} time to Gibbs Markov maps. The method of proof builds 
on~\cite{MT} (in the infinite case) and on the works~\cite{Sarig02, Gouezel04} 
(in the finite case),
which develop operator renewal theory (via first return inducing) for dynamical systems.

Let $(X,\mu)$ be a measure space (finite or infinite), and 
$f:X\to X$ a conservative, ergodic 
measure preserving map. Fix $Y\subset X$ with $\mu(Y)\in(0,\infty)$ and let $\tau$ be the first return of $f$ to $Y$.
Let $L:L^1(\mu)\to L^1(\mu)$ denote the transfer operator 
 for $f$ and define
\[
T_n v:=1_YL^n (1_Yv) ,\enspace n\ge0, \qquad R_nv:=1_YL^n(1_{\{\tau=n\}}v),\enspace n\ge1.
\]
Thus $T_n$ corresponds to general returns to $Y$ and $R_n$ corresponds to first returns to $Y$. The  sequence of operators $T_n=\sum_{j=1}^n R_jT_{n-j}$
generalizes the notion of scalar renewal sequences (for details on the latter we refer to~\cite{Feller66, BGT} and references therein).

Operator renewal sequences via inducing with respect to the \emph{first return} time were introduced in~\cite{Sarig02} to study
lower bounds for 
the correlation function $\rho_n(v,w)$ (for $v,w$ supported on $Y$)
associated with finite measure preserving systems.
This technique was later refined in~\cite{Gouezel04, GouezelPhD}.
In particular, under suitable assumptions on the
first return map $f^\tau$, preserving a measure $\mu_\tau$, and requiring that
$\mu_\tau(y\in Y:\tau(y)>n)=O(n^{-\beta})$, $\beta>2$, ~\cite[Theorem 1]{Sarig02} provides higher order expansions of $T_n$,
while ~\cite[Theorem 1]{Gouezel04} shows that~\cite[Theorem 1]{Sarig02}
holds for $\beta>1$ (see also Subsection~\ref{subsec-prev} where we recall the latter mentioned result in a particular setting).
An immediate consequence of these results is that  the upper bound
$|\int_X v w\circ f^n\, d\mu -\int_X vd\mu\int_X w\, d\mu| = O(n^{-(\beta-1)})$
(for appropriate observables $v, w$ supported on $Y$) is sharp in the sense that there exists a lower bounds of the same order.

The work~\cite{MT} developed a theory of renewal operator sequences for dynamical systems with
infinite measure, generalizing the results of~\cite{GL, Erickson} to the operator case.
This work obtains first and higher order asymptotics for the $n$-th iterate 
 $L^n$ of the transfer operator associated with $f$. In particular, 
under suitable assumptions on the
first return map $f^\tau$ (including the assumption that $\mu_\tau(\tau>n)=\ell(n)n^{-\beta}$, where  $\ell$ is a slowly varying
function and $\mu_\tau$ is $f^\tau$ invariant) it is shown in~\cite{MT} that for $\beta\in (1/2,1)$,
$\lim_{n\to\infty}\ell(n)n^{1-\beta}L^n v=\frac{\sin\pi\beta}{\pi} \int v\, d\mu$, uniformly on $Y$ and pointwise on $X$, for appropriate
 observables $v$. Obviously, this type of result implies that $\lim_{n\to\infty}\ell(n)n^{1-\beta}\rho_n(v,w)=\frac{\sin\pi\beta}{\pi} \int v\, d\mu\,\int w\, d\mu $, for suitable observables $v,w$
 and $\beta\in (1/2,1)$.
For results for $\beta\leq 1/2$ under stronger tail assumptions we refer to~\cite{Gouezel11}.

An important question is whether operator renewal type results/arguments can be 
exploited for $(Y, F=f^\varphi)$, when $\varphi$ is a {\em general} return time 
of $f$ to $Y$ in the sense that
$f^\varphi = (f^\tau)^\rho$, where $\rho:Y \to \N$ is some {\em reinduce time}. 
Assume that $(Y, f^\varphi)$ is Gibbs Markov  preserving a
 measure $\mu_0$.
A Young tower over $f^\varphi$ can be constructed (see~\cite{Young99})
and the first return map on the base of the tower is isomorphic to $(Y, f^\varphi, \mu_0)$ (see Section~\ref{sec-towers}, which recalls
this in detail).

In this work we provide sufficient conditions to answer the above question when the general return map $f^\varphi$
is Gibbs Markov.
In short, we formulate a tail condition on $\mu_0(\rho > k)$
that allows us to work with a decomposition (as in \cite{Gouezel05,GouezelPhD}) of the transfer operator on the Young
tower over $(Y, \mu_0, f^\varphi)$.
Our main result in the finite case 
Theorem~\ref{prop-corel-finite} 
provides upper and lower bounds for the correlation function
$\rho_n(v,w)$  provided that the tails
$\mu_0(\varphi > n)$ and $\mu_0(\rho > k)$ are of the right form
(see (H0) a) and (H1) in Section~\ref{sec-frame}).
More importantly, Theorem~\ref{prop-corel-finite} provides upper and lower bounds of the correlation function
for observables $v,w$ supported on the
whole space $X$ (so not just on $Y$). To deal with observables supported on the whole space, we introduce
 weighted norms, with weights inverse proportional 
to the entrance time to $Y$ (see Section~\ref{sec-abstr}).

Our main result in the infinite case Theorem~\ref{prop-corel-infinite} provides higher order asymptotics of the correlation function
for observables supported on the whole space. This result is obtained assuming higher order expansion of $\mu_0(\varphi > n)$
(see (H0) b) in Section~\ref{sec-frame}) and again, assuming that the tail $\mu_0(\rho > k)$ satisfies (H1) in Section~\ref{sec-frame}.
To deal with observables supported on the whole space, we use the same type of  weighted norms used in the finite case (see Section~\ref{sec-abstr}).

We illustrate the use of  the main results 
in the setting of non-Markov interval maps with indifferent fixed points, in particular the class of maps studied
in \cite{Zweimuller98,Zweimuller00} (see
Section~\ref{sec-AFN}). Below, we recall briefly  the  previous results on the correlation function in this non-Markov setting.

 In the finite measure non-Markov setting (as in \cite{Zweimuller98,Zweimuller00}),
upper bounds for observables supported on the whole space $X$
have been obtained in \cite{MT13}. Although not written up yet, the method in~\cite{MT13}
can also be used to obtain lower bounds for the decay of correlations of observables supported on $Y$
and, most probably, can be extended to deal with observables supported on the whole space.
We also mention that in the same setting, the work~\cite{HuVaienti} provides upper and lower bounds for the decay of correlations of observables supported on $Y$.
In both works~\cite{MT13, HuVaienti}  the results are obtained
by exploiting operator renewal type results/arguments as developed
in \cite{Sarig02, Gouezel04} via inducing with first return times.

In the infinite setting of non-Markov maps $f:X\to X$ (as in \cite{Zweimuller98,Zweimuller00}), first order asymptotics of $L^nv$,
for some appropriate $v$ supported on a compact subset of $X':=X\setminus\{\text{indifferent fixed points}\}$
has been established in~\cite{MT}. This result immediately implies first order asymptotics of $\rho_n(v,w)$,
again for $v,w$ supported on $X'$. Again, the underlying scheme relies on 
inducing with first return times.
The detailed results are recalled in Section~\ref{sec-knownres}.

In the setting of finite measure preserving non-Markov interval maps with indifferent fixed points,
Theorems~\ref{prop-corel-finite} gives upper and lower bounds for the decay of correlation  of observables supported on $X$.
In the infinite measure setting, Theorem~\ref{prop-corel-infinite} gives  higher order asymptotics of $\rho_n(v,w)$ for $v,w$ supported  on $X$.
In checking  the required assumptions of the abstract results (\ie (H0) and (H1)) for typical examples in the class considered in \cite{Zweimuller98,Zweimuller00},
we obtain an excellent estimate on $ \mu_\tau(\tau > n)$. In the infinite measure case, the estimate on $\mu_\tau(\tau > n)$ enables us to extend the results of~\cite{MT,Terhesiu12} 
on the higher order asymptotics of $L^n$ to the typical examples studied here; we refer to Section~\ref{sec-moregen} for details.
\\[2mm]
{\bf Notation:} We will use $a_n = O(b_n)$ and (to make proofs more readable) also  $a_n \ll b_n$  to
mean that there is a uniform constant $C$ such that $a_n \leq C b_n$.

\section{The induced map and main assumptions}
\label{sec-frame}

Given $f:X\to X$, we require that there exists $Y\subset X$ and a \emph{general} (not necessarily first) return time $\varphi: Y\to \N$ such that
the return map $F:=f^\varphi:Y \to Y$, preserving the measure $\mu_0$, is a Gibbs Markov map as recalled below.
For convenience we rescale such that $\mu_0(Y)=1$.

We assume that $F$ has a Markov partition $\alpha = \{ a \}$ such that
$\varphi|_a$ is constant on each partition element, and
$F:a \to Y$ is a bijection $\bmod{\, \mu_0}$. 
Let $p = \log \frac{d\mu}{d\mu\circ F}$ be the corresponding potential.
We assume that there is $\theta \in (0,1)$ and 
$C_1 > 0$ such that
\begin{equation}\label{eq:locLip}
e^{p(y)} \leq C_1 \mu_0(a),\qquad |e^{p(y)}  - e^{p(y')}| \leq C_1 \mu(a) \theta^{s(y, y')} 
\quad \text{ for all } y, y' \in a,\,  a \in \alpha,
\end{equation}
where $s(y_1, y_2) = \min\{ n \geq 0 : F^ny_1\text{ and } F^ny_2 \text{ belong to different elements of } \alpha \}$
is the {\em separation time}. We also assume that $\inf_{a \in \alpha}\mu_0(Fa)>0$ (big image property).

In addition to the Gibbs Markov property above, we assume
that the following holds:

\begin{itemize}
\item[\textbf{(H0)}]
\begin{itemize}
\item[\textbf{a)}] Finite measure case: $\mu_0(y \in  Y:\varphi(y)>n)=O(n^{-\beta})$
for $\beta>1$.
\item[\textbf{b)}] Infinite measure case:
$\mu_0(y\in Y:\varphi(y)>n)=cn^{-\beta}+H(n)$  for $\beta\in (1/2,1)$,
some $c > 0$ and
function $H$ such that  $H(n)=O(n^{-2\beta})$.
\end{itemize}
\end{itemize}

The following dynamical assumption will be verified for the class of maps described 
in Section~\ref{sec-AFN} and will play an important role in the proofs of the main results.

\begin{itemize}
\item[\textbf{(H1)}] Let $\tau:Y\to\N$ be the first return time to $Y$,
and $\tau_k$ the
$k$-th return time to $Y$, \ie $\tau_0 = 0$,
$\tau_{k+1}(y) = \tau_k(y) + \tau(f^{\tau_k(y)}(y))$.
Let $\rho$ be the {\em reinduce time} for
the general return, \ie $\varphi(y) = \tau_{\rho(y)}(y)$.
Write $\{ \varphi > n\}: = \{ y \in Y : \varphi(y) > n\}$.
We assume that there exists $C > 0$ such that
\[
\int_{\{\varphi > n\}} \rho(y) \, d\mu_0 \leq C \mu_0(\varphi > n)
\]
for all $n \geq 0$.
\end{itemize}

\begin{remark}\label{rmk-expansion}
The first return time $\tau$ may be defined on a
larger set than where the general return time
$\varphi$ is defined, but the difference in domains has
measure zero, so we will ignore it.
\end{remark}

In order to have the norms in \eqref{eq-extracond2} below well defined,
we need another mild condition on the inducing scheme.
\begin{itemize}
\item[\textbf{(H2)}]
Either $f^i(a) \subset Y$ or
$f^i(a) \cap Y = \emptyset$ for all $a \in \alpha, 0 \leq i < \varphi(a)$, 
\end{itemize}
This condition certainly holds 
for the examples studied in Section~\ref{sec-AFN} (which provides the required details).

\section{The tower over the map $F=f^\varphi$}
\label{sec-towers}

The tower $\Delta$ is the disjoint union 
of sets $(\{ \varphi = j\} ,i)$, $j \geq 1$, $0 \leq i < j$
with tower map
$$
T_\Delta(y,i) = \begin{cases}
(y,i+1) & \text{ if } 0 \leq i < \varphi(y)-1,\\
(Fy,0) & \text{ if } i = \varphi(y)-1.
\end{cases}
$$
This map preserves the measure $\mu_\Delta$ defined as
$\mu_\Delta(A,i) = \mu_0(A)$ for every measurable set $A$, with $A \subset \{ \varphi = j\}$ and $0 \leq i < j$.

Let $Y_i = \{ (y,i) : \varphi(y) > i\}$ be the $i$-th level of the tower,
so $Y   = Y_0$ is the base.
The restriction $\mu_\Delta|_{Y} = \mu_0$ is invariant under 
$T_\Delta^\varphi$, which is the first return map to the base.

We extend the function $\varphi$ to the tower as
\begin{equation}\label{eq-genDelta}
\varphi_\Delta(y,i) := \varphi(y) - i.
\end{equation}
Define $\pi:\Delta \mapsto X$ by $\pi(y,i) := f^i(y)$.
Then $\mu_X = \mu_\Delta \circ \pi^{-1}$ is $f$-invariant, and $\mu_X$ is related 
to the $F$-invariant measure $\mu_0$ by the usual formula
$$
\mu_X(A) := \sum_{j=1}^\infty \sum_{i=0}^{j-1} \mu_0(f^{-i} \cap \{ \varphi = j\} )
= \sum_{j=0}^\infty \mu_0(f^{-j}A \cap \{ \varphi > j \} ).
$$
Regardless of whether $\bar\varphi := \int_Y \varphi\, d \mu_0$
is finite (in which case we can normalize $\mu_X$) or not, $\mu_0$
is absolutely continuous w.r.t.\ $\mu_X$.
 
Let $v_X, w_X$ be observables supported on the original space $X$;
they lift to observables on the tower which we will
denote by $v_\Delta = v_X \circ \pi$ and $w_\Delta = w_X \circ \pi$.
Then
\begin{equation}\label{eq:muDeltamuX}
 \int_X v_X w_X \circ f^n \, d\mu_X = 
\int_\Delta v_\Delta w_\Delta \circ T_\Delta^n \, d\mu_\Delta. 
\end{equation}
To justify \eqref{eq:muDeltamuX}, use the duality formula
$\int_\Delta \pi^*v_X w_\Delta d\mu_\Delta = \int_X v_X \hat \pi  w_\Delta d\mu_X$,
where $\pi^*v_X = v_X \circ \pi$ and $\hat \pi w_\Delta = w_\Delta \circ \pi^{-1}$.
To compute $ \int_X v_X w_X \circ f^n \, d\mu_X$, it therefore suffices to estimate
the correlation function on the tower.

\section{Results for the map $f$ under the assumptions of Section~\ref{sec-frame}}
\label{sec-abstr}

Throughout we assume that $f$ and $F=f^\varphi$ satisfy the assumptions of Section~\ref{sec-frame}. In particular, we assume that $F$
is Gibbs Markov and that the relevant forms of 
(H0) and (H1) hold.

We restrict to the following class of observables. Let
$$
\tau^*(x) := 1+\min\{ i \geq 0 : f^i(x) \in Y\}.
$$

Recall that $s(x, x')$
is the separation time of points $x,x'\in Y$ and let $\theta \in (0,1)$ be such that ~\eqref{eq:locLip} holds.
Let $v_X:X\to\R$. For $\eps > 0$ we define the weighted norm $\|\, \|_\theta^*$ as follows: 
\begin{equation}~\label{eq-extracond2}
\begin{cases}
\|v_X\|_{\infty}^* := \sup_{x\in X}|v_X(x)|\tau^*(x)^{1+\epsilon}, \\[2mm]
|v_X |_\theta^* = \sup_{a \in \alpha} \sup_{0 \leq i < \varphi(a)} \sup_{x,x' \in a}
 \frac{(\tau^* \circ f^i(a))^{1+\eps}}{\theta^{s(x,x')}} |v_X \circ f^i(x) - v_X \circ f^i(x')|,
\end{cases}
\end{equation}
and $\| v_X \|_\theta^* = \| v_X \|_\infty^* + | v_X |_\theta^*$. 
Note that by (H2),  $\tau^*$ is constant on $f^i(a)$, 
so the factor $\tau^* \circ f^i(a)$ in \eqref{eq-extracond2} 
is well-defined.
\begin{remark}\label{rmk-suppY1}
If $v_X$ is supported on $Y$, then the weighted norms $\|\,\|_{\infty}^*$ and $\| \, \|_\theta^*$ coincide with 
$\|\,\|_{L^\infty(\mu_0)}$ and $\| \, \|_\theta$ with $\|v\|_\theta=\|v\|_{L^\infty(\mu_0)}+Lip(v)$, where $Lip(v)$ is
the Lipschitz constant of $v$ w.r.t.\ the distance $d_\theta(x,x'):=\theta^{s(x,x')}$.
\end{remark}

The main results in the present set-up are stated below.

\begin{thm}[finite measure]\label{prop-corel-finite} 
Assume (H0) a) and (H1).
Suppose that $v_X, w_X:X\to\R$ are such that $\|v_X\|_\theta^*<\infty$ and $\|w_X\|_{\infty}^*<\infty$.
Let $d\mu:=\frac{1}{\bar\varphi}d\mu_X$. Then
\begin{align*}
\int_X v_X w_X \circ f^n \, d\mu -\int_X v_X\, d\mu \int_X w_X  \, d\mu
&=\frac{1}{\bar\varphi}\sum_{j=n+1}^\infty \mu_0(\varphi>j)  \int_X v_X\, d\mu \int_X w_X  \, d\mu\\
&\quad +O(\|v_X \|_{\theta}^*\cdot \| w_X \|_{\infty}^*\cdot d_n),
\end{align*}
where
$$
d_n :=  \begin{cases}
n^{-\beta} &\text{ if } \beta > 2; \\
n^{-2} \log n &\text{ if } \beta = 2; \\
 n^{-(2\beta-2)} &\text{ if } \beta < 2.
\end{cases}
$$
\end{thm}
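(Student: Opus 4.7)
The plan is to reduce the correlation on $X$ to the correlation on the tower $\Delta$ via \eqref{eq:muDeltamuX}, apply Sarig--Gouezel operator renewal theory on the base $Y$ for the Gibbs--Markov first-return map $F = T_\Delta^\varphi$, and handle observables supported on all of $\Delta$ by decomposing $v_\Delta, w_\Delta$ level-by-level and pushing down to $Y$. Assumption (H1) will be used to control the contribution of columns of height exceeding $n$. By \eqref{eq:muDeltamuX} it suffices to estimate $\int_\Delta (L_\Delta^n v_\Delta)\, w_\Delta \, d\mu_\Delta$. Write $v_\Delta = \sum_{k\ge 0} v_\Delta 1_{Y_k}$ and analogously for $w_\Delta$. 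Starting from $(y,k) \in Y_k$, the tower orbit first reaches the base $Y$ at step $\varphi(y) - k$ (at the point $(Fy,0)$), so a first-return decomposition yields
\[
1_Y L_\Delta^n (v_\Delta 1_{Y_k}) = \sum_{j=1}^n T_{n-j}\bigl( L_F(v_\Delta(\cdot,k) 1_{\{\varphi = j+k\}}) \bigr), \qquad T_m := 1_Y L_\Delta^m 1_Y,
\]
with a symmetric statement obtained by dualising through $w_\Delta$.

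Since $F$ is Gibbs--Markov on $(Y,\mu_0)$ with $\mu_0(\varphi>n)=O(n^{-\beta})$, $\beta>1$, \cite[Theorem~1]{Gouezel04} applied to the pair $T_n, R_n$ on the Lipschitz space $\mathcal L_\theta$ gives an asymptotic expansion of the form
\[
T_n u = \bar\varphi^{-1}\Bigl(\int_Y u\, d\mu_0\Bigr) 1_Y + \bar\varphi^{-1}\Bigl(\sum_{j>n}\mu_0(\varphi>j)\Bigr)\Bigl(\int_Y u\, d\mu_0\Bigr) 1_Y + E_n u,
\]
with $\|E_n\|_{\mathcal L_\theta \to L^\infty(\mu_0)} = O(d_n)$. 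The middle term, paired with $w$, will eventually produce the explicit correction appearing in the statement.

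Substituting into the level decomposition and pairing with the analogous decomposition of $w_\Delta$, the weighted norms give summable contributions level by level: by (H2), $\tau^* \circ f^i$ is constant on each partition element, so the factor $(\tau^*)^{-(1+\eps)}$ built into $\|v_X\|_\theta^*$ and $\|w_X\|_\infty^*$ absorbs the sums over $k, l$ (the total contribution of a column of height $\varphi(a)$ to $\sum_k (\tau^*(f^k a))^{-(1+\eps)}$ is uniformly bounded). The leading terms combine to produce $\int v_X\,d\mu \int w_X\,d\mu$, the middle terms give the advertised correction, and the $E_{n-j}$ errors sum, via a standard convolution argument splitting $j+k+l \le n/2$ from $>n/2$, to $O(\|v_X\|_\theta^* \|w_X\|_\infty^* d_n)$. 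Contributions from columns where the orbit has not reached $Y$ by step $n$ are handled by (H1): expanding such a column as $\rho(y)$ first-return excursions of $f$ and estimating directly, the bound $\int_{\{\varphi>n\}} \rho\, d\mu_0 \ll \mu_0(\varphi>n) = O(n^{-\beta})$ absorbs them into $O(d_n)$.

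The main difficulty is that $\varphi$ is a general, not first, return: each $\varphi$-column of $(Y,F)$ packs together $\rho(y)$ first-return excursions of $f$ to $Y$, so the columns are intrinsically large even when the individual excursions are short, and the separation time on $Y$ is the $F$-separation time rather than the $f^\tau$-separation time. Condition (H1) is the key bridge guaranteeing that the mass in high $\varphi$-columns behaves as if it came from a first-return tail with the same index $\beta$; without it the pushdown sums above could not close at rate $d_n$. Reconciling the level-by-level pushdown estimates (controlled by $\|v_X\|_\theta^*$ and $\|w_X\|_\infty^*$) with the Gibbs--Markov Lipschitz structure on $(Y,F)$ and the $\rho$-bookkeeping provided by (H1) is the core of the proof.
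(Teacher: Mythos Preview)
Your strategy is essentially the paper's: decompose the tower orbit into ``descent to base / evolution on base / ascent from base'', apply Gou\"ezel's renewal expansion to the base piece, and use (H1) together with the weighted norms to control the descent/ascent sums and the orbits that never see the base. The paper packages the same computation via Gou\"ezel's operators $A_n, B_n, C_n$ (so that $L_\Delta^n = \sum A_{n_1}T^*_{n_2}B_{n_3} + C_n$) and generating functions $A(z)T^*(z)B(z)$, which makes the extraction of the exact correction term cleaner: writing $A(z) = A(1) + (A(z)-A(1))$ and likewise for $B$ isolates $A(1)PB(1)$ (giving the product of integrals via Lemma~\ref{lemma-of1}) and $A(1)[\text{correction}]B(1)$ (giving exactly $\bar\varphi^{-2}\sum_{k>n}\mu_0(\varphi>k)\int v\,d\mu_X\int w\,d\mu_X$), while the cross terms with $A(z)-A(1)$ or $B(z)-B(1)$ are shown to be $O(\mu_0(\varphi>n))$ directly from Lemmas~\ref{lemma-An}--\ref{lemma-Bn2}. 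Your direct convolution argument would reach the same conclusion, but you should be aware that the correction term at base time $m=n-j-l$ is $\sum_{i>m}\mu_0(\varphi>i)$, not $\sum_{i>n}$, and you need the summability of $\sum_j j\|B_j v_\Delta\|_\theta$ (which follows from Lemma~\ref{lemma-Bn2} and $\bar\varphi<\infty$) to absorb the discrepancy into $O(n^{-\beta})$. One slip: in your stated Gou\"ezel expansion the correction coefficient is $\bar\varphi^{-2}$, not $\bar\varphi^{-1}$ (cf.\ Lemma~\ref{lemma-Gouezel}); this matters for matching the constant in the theorem.
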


\begin{thm}[infinite measure]\label{prop-corel-infinite} 
Assume (H0) b) and (H1). 
Suppose that $v_X, w_X:X\to\R$ are such that  $\|v_X\|_\theta^*<\infty$ and  $\|w_X\|_{\infty}^*<\infty$. Let $q=\max\{j\geq 0:2(j+1)\beta>2j+1\}$. Then there exist
real constants\footnote{The constants $d_0,\ldots, d_{q-1}$ depend only on $\beta$ and the constant $c$
appearing in (H0) b). 
For their precise
form we refer to~\cite[Theorem 9.1]{MT} and~\cite[Theorem 1.1]{Terhesiu12}.}
$d_0,\ldots, d_{q-1}$ such that
\begin{align*}
\int_X v_X w_X \circ f^n \, d\mu_X 
&=(d_0n^{\beta-1}+\ldots + d_{q-1} n^{q(\beta-1)})\int_X v_X\, d\mu_X \int_X w_X  \, d\mu_X\\
&\quad +O(\|v_X\|_\theta^*\cdot \| w_X \|_{\infty}^*\cdot d_n),
\end{align*}
where $d_n= n^{-(\beta-1/2)}$.
\end{thm}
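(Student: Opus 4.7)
The plan is to pull the computation back to the Young tower $\Delta$ over $F = f^\varphi$, where the higher-order operator renewal expansion from \cite{MT, Terhesiu12} can be applied to the base map, and then recover the $X$-correlation by handling observables that live above the base. By~\eqref{eq:muDeltamuX} it suffices to estimate $\int_\Delta v_\Delta\, w_\Delta \circ T_\Delta^n\, d\mu_\Delta$. I decompose the lifted observables by tower level, $v_\Delta = \sum_{m \geq 0} v_m$ with $v_m := v_\Delta \cdot 1_{Y_m}$, and similarly $w_\Delta = \sum_{l \geq 0} w_l$. For a fixed pair $(m, l)$, the orbit of a point $(y, m) \in Y_m$ under $T_\Delta$ first descends deterministically to $(Fy, 0) \in Y_0$ at time $\varphi(y) - m$, performs some number of base-to-base excursions governed by $F$, and finally ascends $l$ steps to land in $Y_l$ at time $n$. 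Tracking these paths expresses $\int v_m\, (w_l \circ T_\Delta^n)\, d\mu_\Delta$ as a convolutional sum involving the renewal operators $T_k = 1_{Y_0} L_\Delta^k 1_{Y_0}$ acting on the natural $Y_0$-restrictions $\tilde v_{m,k}(y) := v_X(f^m(y))\, 1_{\{\varphi(y) = k+m\}}$ and $\tilde w_l(y) := w_X(f^l(y))\, 1_{\{\varphi(y) > l\}}$, plus short-time boundary contributions whose size is controlled by $\mu_0(\varphi > n)$.

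Next, I apply the higher-order renewal expansion from \cite[Theorem 9.1]{MT} and \cite[Theorem 1.1]{Terhesiu12}, whose hypotheses are met: $F$ is Gibbs Markov on $Y_0$ by assumption, and (H0) b) supplies the second-order tail $\mu_0(\varphi > n) = c n^{-\beta} + O(n^{-2\beta})$. Applied to $\tilde v_{m,k}$ regarded as a Lipschitz function on $Y_0$ for the metric $d_\theta$, it gives
\begin{align*}
T_N\, \tilde v_{m,k} = \Bigl( d_0 N^{\beta-1} + \cdots + d_{q-1} N^{q(\beta-1)} \Bigr) \int_{Y_0} \tilde v_{m,k}\, d\mu_0 + E_N(\tilde v_{m,k}),
\end{align*}
with $\|E_N(\phi)\|_{L^\infty(\mu_0)} \leq C\, N^{-(\beta - 1/2)}\, \|\phi\|_\theta$. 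Pairing against $\tilde w_l$, summing over $k$, $m$ and $l$, and using $\sum_{m, k} \int_{Y_0} \tilde v_{m,k}\, d\mu_0 = \int_X v_X\, d\mu_X$ (together with the analogous identity for $w_X$), the leading terms reassemble into $(d_0 n^{\beta-1} + \cdots + d_{q-1} n^{q(\beta-1)}) \int v_X\, d\mu_X \int w_X\, d\mu_X$, which is the main asymptotic in the theorem.

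The principal obstacle is to sum the error contributions across all levels $m, l$ and partition elements of $\alpha$ while preserving the overall bound $O(\|v_X\|_\theta^*\, \|w_X\|_\infty^*\, n^{-(\beta - 1/2)})$. The weighted norms~\eqref{eq-extracond2} are designed precisely for this: by (H2), on each partition element $a \in \alpha$ with $0 \leq i < \varphi(a)$, the function $v_X \circ f^i$ restricted to $a$ has uniform and $d_\theta$-Lipschitz bounds of order $\|v_X\|_\theta^* / \tau^*(f^i(a))^{1+\eps}$. Since $\varphi = \tau_\rho$, a direct count gives $\sum_{i=0}^{\varphi(a)-1} \tau^*(f^i(a))^{-(1+\eps)} \leq C(\eps)\, \rho(a)$, so that integration against $\mu_0$ restricted to a tail set yields, via (H1),
\begin{align*}
\int_{\{\varphi > N\}} \sum_{i=0}^{\varphi(y)-1} \tau^*(f^i(y))^{-(1+\eps)}\, d\mu_0(y)
\leq C(\eps) \int_{\{\varphi > N\}} \rho\, d\mu_0 \leq C'\, \mu_0(\varphi > N) = O(N^{-\beta}).
\end{align*}
With this input, the level-and-partition sum of the renewal errors is bounded by a constant multiple of $\|v_X\|_\theta^*\, \|w_X\|_\infty^*\, n^{-(\beta - 1/2)}$; the short-time boundary terms are of order $\mu_0(\varphi > n) = O(n^{-\beta})$, which is $o(n^{-(\beta - 1/2)})$ since $\beta > 1/2$; and the contribution from very large levels is absorbed after a standard truncation at a scale of order $n$. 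Combining these estimates yields the advertised error $d_n = n^{-(\beta - 1/2)}$.
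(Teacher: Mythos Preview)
Your outline uses the same underlying structure as the paper: lifting to the tower $\Delta$, the Gou\"ezel decomposition into descent/base-renewal/ascent pieces (your $\tilde v_{m,k}$, $T_N$, $\tilde w_l$ are precisely the ingredients of the operators $B_n$, $T_n^*$, $A_n$ in Section~\ref{sec-gentower}), Lemma~\ref{lemma-MT} on the base, and the use of (H1) together with the weighted norms to control level sums (this is exactly the content of Lemmas~\ref{lemma-Cn}--\ref{lemma-Bn2}). The main divergence is in execution: you apply the renewal expansion $T_N = (\text{poly in }N)P + D_N$ term by term and then sum, whereas the paper packages everything into generating functions, writes $A(z)T^*(z)B(z) = A(1)T^*(z)B(1) + I_{\inf}^A(z) + I_{\inf}^B(z)$, and extracts the coefficients of the cross terms by a H\"older-continuity/Fourier argument (Lemma~\ref{lemma-b} applied via Corollary~\ref{cor-IaIBinf}).

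The gap in your write-up is the ``reassembly'' step. After summing the leading terms you obtain $\sum_{k+l+N=n} \gamma_k\,\delta_l\,(d_0 N^{\beta-1}+\cdots)$ with $\gamma_k = \sum_m \int \tilde v_{m,k}\,d\mu_0$ and $\delta_l = \int \tilde w_l\,d\mu_0$; this is \emph{not} equal to $(d_0 n^{\beta-1}+\cdots)\int v_X\,d\mu_X\int w_X\,d\mu_X$, and the discrepancy --- a genuine convolution of $N^{\beta-1}$ against the summable sequences $\gamma_k$, $\delta_l$ (whose tails are $O(n^{-\beta})$ via (H1), as you correctly note) --- must itself be shown to be $O(n^{-\beta})$. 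This is precisely what the paper's $I_{\inf}^A$, $I_{\inf}^B$ terms encode, and the Fourier argument of Lemma~\ref{lemma-b} is the device that delivers the $O(n^{-\beta})$ bound without doing the convolution by hand. Your direct route can be completed (split at $n/2$, apply the mean value theorem to $(n-m)^{\beta-1}-n^{\beta-1}$, and use the $O(n^{-\beta})$ tail bounds on $\gamma$ and $\delta$), but the proposal as written asserts the reassembly without argument. The paper's generating-function organization is cleaner precisely because the main term $A(1)T_n^*B(1)$ is already in the right shape and no reassembly is required.
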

\begin{remark}
Instead of (H0) b), one can assume a stronger tail expansion of the form used in~\cite[Theorem 1.1]{Terhesiu12}
and as such obtain an improved error term in Theorem~\ref{prop-corel-infinite} . This is just an exercise, which can be solved 
by the argument used in the proof of Theorem~\ref{prop-corel-infinite}.
\end{remark}

\begin{remark}
The novelty of Theorem~\ref{prop-corel-finite} lies in the fact that
the observables are supported on the whole space, and of course the fact that this result can be obtained by inducing with general return times.
Theorem~\ref{prop-corel-infinite} is new, even for observables supported on $Y$.
\end{remark}

\section{Transfer operators on the tower}
\label{sec-gentower}

Let $L_\Delta$ be the transfer operator associated with
the tower map $T_\Delta$ and potential 
$$
p_\Delta(y,i) := \begin{cases}
0 & \text{ if } i < \varphi(y)-1,\\
p(y)& \text{ if } i = \varphi(y)-1.
\end{cases}
$$
Given that $L$ is the transfer operator associated with $(X,f,\mu_X)$, we have $L_\Delta \pi^* v_X = \pi^* L v_X$.

Recall that $Y   = Y_0$ is the base of the tower $\Delta$ and that $F=f^\varphi$ preserves the measure $\mu_0$. 
Choose $\theta \in (0,1)$ such that ~\eqref{eq:locLip} holds
and put $d_\theta(x,x'):=\theta^{s(x,x')}$, where $s(x, x')$
is the separation time.
Let $\B_\theta(Y)$ be the Banach space of $d_\theta$-Lipschitz functions $v: Y\to \R$
with norm $\|v\|_\theta=\|v\|_{L^\infty(\mu_0)}+Lip(v)$, where $Lip(v)$ is the Lipschitz constant of $v$ w.r.t.\ $d_\theta$.
 
Let $R^*:L^1(\mu)\to L^1(\mu)$ be  the transfer operator  associated with $F=f^\varphi$.
Under the assumption that $F$ is Gibbs Markov, it is known  that
 (see, for instance,~\cite[Section 5]{Sarig02}):
\begin{itemize}
\item[\textbf{(P1)}]
\begin{itemize}
\item[\textbf{a)}]
The space $\B_\theta(Y)$ contains constant functions and  $\mathcal{B}_\theta(Y)\subset L^\infty( \mu_0)$.
\item[\textbf{b)}]
 $1$ is a simple eigenvalue for $R^*$, isolated in
the spectrum of $R^*$.
\end{itemize}
\end{itemize}

Let $\D=\{z\in\C:|z|<1\}$ and
$\bar\D=\{z\in\C:|z|\le1\}$.
Given $z\in\bar\D$, we define $R^*(z):L^1(Y)\to L^1(Y)$ to be
the operator $R^*(z)v:= R^*(z^{\varphi}v)$.
 By (P1) b), $1$ is an isolated eigenvalue in the spectrum of $R^*(1)$. 
In addition, we know that (see, for instance,~\cite[Section 5]{Sarig02})

\begin{itemize}
\item[\textbf{(P2)}]
For $z\in\bar\D\setminus\{1\}$, the spectrum of $R^*(z)$ does not contain $1$.
\end{itemize}

Note that $\varphi$ is the first return time of $T_\Delta$ to the base $Y$. Define the following transfer operators that describe the general resp.\ the first return
to the base $Y$:
$$
T^*_nv := 1_{Y} L_\Delta^n(1_{Y}v),\enspace n\ge0, \qquad 
R_n^*v := 1_{Y}L_\Delta^n(1_{\{\varphi=n\}}v)=R^*(1_{\{\varphi= n\}} v),\enspace n\ge1.
$$

By, for instance,~\cite[Lemma 8]{Sarig02}
there is $C> 0$ such that 
\begin{itemize}
\item[\textbf{(P3)}] $\| R^*(1_a v ) \|_\theta \leq C \mu_0(a)\|1_a v\|_\theta$ for all $a \in \alpha$   
and hence $\|R^*_n\|_\theta=O(\mu_0(\varphi=n))$.
\end{itemize}
As recalled below, $(I-R^*(z))^{-1}$ can be used to understand the 
asymptotics
of the transfer operator of the Markov tower over the (general) induced map $F=f^\varphi$.
First, it is easy to see that 
$R^*(z)v= R^*(z^{\varphi} v)=\sum_{n\geq 1} (R_n^* v)z^n$. 
By (P1) and (P2), when viewed as a family of operators
acting on $\mathcal{B}_\theta(Y)\subset L^\infty(\mu_0)$, the function $(I-R^*(z))^{-1}$ 
is bounded 
and continuous on $\bar\D\setminus\{1\}$. By (P3), $R_n^*$ is bounded, so  $z\mapsto (I-R^*(z))^{-1}$ is analytic on $\D$.

Since $\varphi$ is a first return time of $T_\Delta$ to the base $Y$,
we have the renewal equation on the tower $T^*_n=\sum_{j=1}^{n} R_j^* \,T^*_{n-j}$.
For $z\in\bar\D$, define
 $T^*(z):=\sum_{n\geq 0} T^*_n z^n$ and recall  $R^*(z)=\sum_{n\geq 1} R_n^* z^n$. 
Since $(I-R^*(z))^{-1}$ is well defined on $\bar\D\setminus\{1\}$,
we have the following equation on $\bar\D\setminus\{1\}$:
\begin{equation}\label{eq-ren-tower}
T^*(z)=(I-R^*(z))^{-1}.
\end{equation}

Under both forms of (H0) (\ie finite and infinite measure preserving)
and the rest of the assumptions in Section~\ref{sec-frame},
the asymptotic behavior of $T_n^*$ is well understood 
(\cite{Sarig02, Gouezel04, MT}); we also refer to Subsection~\ref{subsec-prev} where we recall these results.

Since $v_\Delta = v_X \circ \pi$ is in general not supported on $Y_0$,
equation \eqref{eq-ren-tower} cannot be used as such to obtain information on the asymptotic behaviour of the correlation function
on the tower given by
$\int_\Delta v_\Delta w_\Delta \circ T_\Delta^n \, d\mu_\Delta=\int_\Delta L_\Delta^n v_\Delta w_\Delta\, d\mu_\Delta$. 
Hence, one cannot just use  \eqref{eq-ren-tower} and ~\eqref{eq:muDeltamuX} to estimate the correlation function
$\int_X v_X  w_\Delta \circ f^n d\mu_X$ for the map $f:X \to X$. However, 
the operators $A_n$ and $B_n$ defined below
can be used to deal with precisely this problem.

Following \cite[Section 2.1.1]{GouezelPhD}, to understand the behaviour of $L_\Delta^n$ via 
the behaviour of $T^*_n$, we need to define several operators that describe the action of 
$L_\Delta^n$ outside $Y$.
Recall from~\eqref{eq-genDelta} that $\varphi_\Delta(y,i)=\varphi(y)-i$ and define the operators
associated with the end resp.\ beginning
of an orbit on the tower 
as
$$
A_nv :=  L_\Delta^n(1_{\{ \varphi > n \} } v ), \quad  n \geq 0,
\qquad \qquad
B_nv := \begin{cases}
1_{Y} L_\Delta^n(1_{\{ \varphi_\Delta = n  \} \setminus Y }v ), &  n \geq 1, \\
1_{Y}v, & n = 0.
\end{cases}
$$
The operator associated with orbits that do not see the base of the tower is:
$$
C_n v :=  L_\Delta^n(1_{\{ \varphi_\Delta > n \}\setminus Y } v), \quad n \geq 0.
$$
As noticed in  \cite[Section 2.1.1]{GouezelPhD}, the following equation describes
the relationship between $T^*_n=1_{Y} L_\Delta^n 1_{Y}$ and $L_\Delta^n$.
\begin{equation}\label{eq-LDelta}
  L_\Delta^n = \sum_{n_1+n_2+n_3=n} A_{n_1}T^*_{n_2}B_{n_3} + C_n.
\end{equation}

\section{Proofs of the main results: previous and new ingredients}
\label{sec-strategy}

\subsection{Previous ingredients}
\label{subsec-prev}

As already mentioned,  $T_\Delta^\varphi$ is the first return map for $T_\Delta$
to the base $Y_0=Y$. Thus,
previous results on  renewal theory, in particular ~\cite[Theorem 1]{Gouezel04} (under (H0) a)-finite measure case) and
~\cite[Theorem 9.1]{MT} (under (H0) b)- infinite measure case), apply to $T_n^*$.
We start by recalling these results, as relevant to the present setting.
Let $P$ denote the spectral projection corresponding to the eigenvalue $1$
for $R^*(1)$. So, we can write $Pv(y)\equiv \int_Y v\, d\mu_0$.

\begin{lemma}~\cite[Theorem 1]{Gouezel04}
\label{lemma-Gouezel}
Assume that $F$ is Gibbs Markov and that (H0) a) holds. Then
\[
T_n^*=\frac{P}{\bar\varphi}+\frac{1}{\bar\varphi^2}\sum_{k=n+1}^\infty \sum_{j=k+1}^\infty P R_j P +E_n,
\]
where $E_n$ is an operator on $\B_\theta(Y)$ satisfying 
$$
\|E_n\|_\theta\ll \begin{cases}
n^{-\beta} &\text{ if } \beta > 2; \\
n^{-2} \log n &\text{ if } \beta = 2; \\
 n^{-(2\beta-2)} &\text{ if } \beta < 2.
\end{cases}
$$
\end{lemma}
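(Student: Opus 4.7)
The proof is essentially that of \cite[Theorem 1]{Gouezel04}, so my plan is to recall its structure adapted to the present notation. The starting point is the renewal equation $T^*(z) = (I - R^*(z))^{-1}$ on $\bar\D \setminus \{1\}$ together with the Cauchy-type recovery $T^*_n = \frac{1}{2\pi}\int_0^{2\pi} e^{-in\theta} T^*(e^{i\theta})\, d\theta$. Everything comes down to asymptotic analysis of $(I - R^*(z))^{-1}$ near $z = 1$ in the operator norm on $\B_\theta(Y)$, followed by a Fourier/multiplier argument to translate that analysis into decay of $T^*_n - \frac{P}{\bar\varphi} - \frac{1}{\bar\varphi^2}\sum_{k \geq n+1}\sum_{j \geq k+1} P R_j P$.

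Next, I would set up the spectral perturbation. By (P1) and (P3), $R^*(1)$ has a simple isolated eigenvalue $1$ with rank-one projection $P v = (\int v\, d\mu_0)\cdot \mathbf{1}$, and $\|R^*(z) - R^*(1)\|_\theta \to 0$ as $z \to 1$. Standard perturbation theory then produces, for $z$ close to $1$, a simple eigenvalue $\lambda(z)$ near $1$ with spectral projection $P(z)$ and a complementary piece $(I - R^*(z))^{-1}(I - P(z))$ which is bounded on a neighborhood of $\bar\D$. The latter piece has exponentially decaying Fourier coefficients, so it is absorbed into the error. For the eigenvalue, a direct computation using $R^*(z) P = \hat\varphi(z)\, P$ on $\text{Range}(P)$, where $\hat\varphi(z) = \sum_n \mu_0(\varphi = n) z^n$, gives $\lambda(z) = 1 - (1-z)\bar\varphi + (1-z)^2 \cdot\! \mathcal{W}(z) + o(\cdot)$, where by summation by parts $\mathcal{W}(z) = \sum_{j \geq 0} z^j \sum_{k > j}\mu_0(\varphi > k)$. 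The projection $P(z) = P + O(1-z)$ by the same tail information.

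Feeding this in, one obtains
\[
(I - R^*(z))^{-1} = \frac{P(z)}{1 - \lambda(z)} + (\text{analytic on neighborhood of }\bar\D),
\]
and the leading term expands as
\[
\frac{P(z)}{1-\lambda(z)} = \frac{P}{(1-z)\bar\varphi} + \frac{\mathcal{W}(z)\, P}{\bar\varphi^2} + (\text{remainder}).
\]
The Fourier coefficient of $\frac{1}{1-z}$ contributes the constant $P/\bar\varphi$; that of $\mathcal{W}(z) = \sum_{j \geq 0} z^j \sum_{k>j}\mu_0(\varphi > k)$ is precisely $\sum_{k>n}\mu_0(\varphi > k) = \sum_{k \geq n+1}\sum_{j \geq k+1}\mu_0(\varphi = j)$, and since $P R_j P = \mu_0(\varphi = j)\, P$ this yields the claimed second term $\bar\varphi^{-2}\sum_{k \geq n+1}\sum_{j \geq k+1} P R_j P$. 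The error $E_n$ is the Fourier coefficient of the remainder.

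The main obstacle is bounding $\|E_n\|_\theta$ in the regime $\beta \in (1,2]$: there the derivative $\sum_n n^2 \mu_0(\varphi = n)$ is infinite, so the naive Taylor expansion of $\hat\varphi(z)$ past second order fails and one cannot simply integrate by parts twice. The remedy, which is the heart of \cite{Gouezel04}, is to write the remainder explicitly as a sum of tail-weighted geometric series in $z$ and apply sharp $L^1$/multiplier estimates to the Fourier coefficients of $(1-z)^\alpha$-type functions, yielding the case split $n^{-\beta}$, $n^{-2}\log n$, $n^{-(2\beta-2)}$. I would import those multiplier bounds verbatim from \cite{Gouezel04}; the only thing requiring verification in our setting is that the norm estimate (P3), together with (H0)(a), produces exactly the required control on $R^*(z) - R^*(1)$ and its differences at the operator-norm level on $\B_\theta(Y)$, which follows from $\|R_n^*\|_\theta \ll \mu_0(\varphi = n)$ after summation against the tail $\mu_0(\varphi > n) = O(n^{-\beta})$.
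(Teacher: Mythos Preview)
The paper does not give a proof of this lemma at all: it is stated as a direct citation of \cite[Theorem~1]{Gouezel04} and used as a black box in Section~6. So there is no ``paper's own proof'' to compare against; your proposal is a sketch of the argument in the cited reference, and as such it is essentially correct in outline.

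One small imprecision worth flagging: you write ``$R^*(z)P=\hat\varphi(z)P$ on $\mathrm{Range}(P)$'', but $R^*(z)\mathbf 1=R^*(z^\varphi)$ is in general not a constant function, so this identity is false as written. What is true, and what you actually need, is $PR^*(z)P=\hat\varphi(z)P$, which follows from $\int_Y R^*(z^\varphi)\,d\mu_0=\int_Y z^\varphi\,d\mu_0=\hat\varphi(z)$. This is what feeds into the perturbation expansion $\lambda(z)=\hat\varphi(z)+O(\|R^*(z)-R^*(1)\|_\theta^2)$ and hence into the asymptotics of $1-\lambda(z)$. With that correction, the rest of your decomposition (the $P(z)/(1-\lambda(z))$ term, the identification of the $n$-th coefficient of $\mathcal W(z)$ with $\sum_{k>n}\mu_0(\varphi>k)$, and the reduction of the error to Gou\"ezel's multiplier bounds under (P3) and (H0)(a)) is accurate.
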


\begin{lemma}~\cite[Theorem 9.1]{MT}\label{lemma-MT}
Assume that  $F$ is Gibbs Markov  and that (H0) b)holds. Let $q=\max\{j\geq 0:2(j+1)\beta>2j+1\}$. Then there exist
real constants $d_0,\ldots, d_{q-1}$ (depending only on the constants and parameters involved in (H0) b))\footnote{For the precise
form of these constants we refer to~\cite[Theorem 9.1]{MT} and~\cite[Theorem 1.1]{Terhesiu12}.}
such that
\[
T_n^*=(d_0n^{\beta-1}+\ldots d_{q-1} n^{q(\beta-1)})P +D_n, \qquad n \geq 1,
\] 
where $D_n$  is an operator on $\B_\theta(Y)$ satisfying $\|D_n\|_\theta=O(n^{-(\beta-1/2)})$.
\end{lemma}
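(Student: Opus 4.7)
The plan is to apply the operator renewal theory of~\cite{MT,Terhesiu12} to the first-return scheme $(Y, T_\Delta^\varphi = F, \mu_0)$ on the base of the tower. The starting point is the renewal identity~\eqref{eq-ren-tower}, $T^*(z) = (I - R^*(z))^{-1}$, valid on $\bar\D \setminus \{1\}$ by (P1)--(P2). Since $1$ is a simple isolated eigenvalue of $R^*(1)$ on $\B_\theta(Y)$ with spectral projection $P$, Kato's analytic perturbation theory yields in a neighborhood of $z=1$ a decomposition $R^*(z) = \lambda(z) \Pi(z) + N(z)$, with $\lambda(1) = 1$, $\Pi(1) = P$, and spectral radius of $N(z)$ uniformly bounded below $1$. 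Thus $(I - R^*(z))^{-1} = \Pi(z)/(1 - \lambda(z)) + (I - N(z))^{-1}$ near $z = 1$, and the analysis is reduced to the scalar $1/(1-\lambda(z))$ weighted by $\Pi(z)$.

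Next, I would convert (H0) b) into an asymptotic expansion of $1 - \lambda(z)$. The standard Karamata/Abel computation, applied to the generating-function identity $1 - \sum_n \mu_0(\varphi=n) z^n = (1-z) \sum_n \mu_0(\varphi > n) z^n$ in combination with $\mu_0(\varphi > n) = cn^{-\beta} + O(n^{-2\beta})$, gives
\[
1 - \lambda(z) = c\,\Gamma(1-\beta)(1-z)^\beta + O\bigl(|1-z|^{2\beta}\bigr) \quad \text{as } z \to 1 .
\]
Inverting and expanding geometrically in powers of $(1-z)^\beta$, I would produce a finite singular expansion of $(I - R^*(z))^{-1}$ of length $q = \max\{\, j \geq 0 : 2(j+1)\beta > 2j+1 \,\}$ plus a Hölder-continuous remainder $E(z)$ on $\bar\D$. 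The cutoff $q$ is chosen precisely so that any further omitted singular term would contribute a Taylor coefficient smaller than the target error $n^{-(\beta-1/2)}$.

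Finally, I would extract the $n$-th Taylor coefficient of $(I - R^*(z))^{-1}$ by a contour integration along $\partial\D$. Splitting the circle into a small arc around $z=1$ (on which the local singular expansion yields the stated polynomial $d_0 n^{\beta-1} + \ldots + d_{q-1} n^{q(\beta-1)}$ acting on $P$) and its complement (on which $(I - R^*(z))^{-1}$ is bounded by (P2), and the $\|\cdot\|_\theta$-regularity of $z \mapsto R^*(z)$ supplied by (P3), $\|R^*_n\|_\theta = O(\mu_0(\varphi = n))$, forces the Fourier coefficients of $E(z)$ to decay at least like $n^{-(\beta-1/2)}$) gives the stated expansion of $T_n^*$. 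The principal obstacle is the operator-valued Tauberian upgrade: all scalar asymptotics must be promoted to bounds in the operator norm on $\B_\theta(Y)$, which requires carefully tracking the $\|\cdot\|_\theta$-regularity of $\Pi(z)$ and $N(z)$ near $z = 1$, and exploiting the higher-order tail control $H(n) = O(n^{-2\beta})$ in (H0) b) to secure the sharp error $O(n^{-(\beta-1/2)})$ rather than merely a first-order one.
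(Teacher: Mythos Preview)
The paper does not prove this lemma at all: it is stated in Section~6.1 under the heading ``Previous ingredients'' and is attributed directly to \cite[Theorem~9.1]{MT} (with the footnote pointing also to \cite[Theorem~1.1]{Terhesiu12}). There is therefore nothing in the present paper to compare your argument against.

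That said, your sketch is a faithful outline of the strategy actually carried out in \cite{MT}: the renewal identity $T^*(z)=(I-R^*(z))^{-1}$, the spectral decomposition $R^*(z)=\lambda(z)\Pi(z)+N(z)$ via analytic perturbation, the Abelian expansion of $1-\lambda(z)$ from the tail hypothesis (H0)~b), the geometric inversion producing the finite sum of singular powers $(1-z)^{-\beta}, (1-z)^{-(2\beta-1)},\dots$, and the Fourier-coefficient extraction with a remainder controlled in $\|\cdot\|_\theta$. The only point where your description is slightly imprecise is the treatment of the remainder: in \cite{MT} the error $O(n^{-(\beta-1/2)})$ is not obtained simply from H\"older regularity of $E(z)$ on all of $\bar\D$, but from a more delicate splitting of the contour near $z=1$ together with a quantitative continuity estimate of the type recalled here as Lemma~\ref{lemma-T*}. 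This is exactly the ``operator-valued Tauberian upgrade'' you flag as the principal obstacle, so you have correctly identified where the real work lies; filling in those details is the content of \cite[Sections~4 and~9]{MT}.
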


\subsection{New ingredients: estimates related to $A_n, B_n, C_n$ under (H1)}
\label{subsec-new}

In the results stated below  we use the norm $\|\,\|_{\theta}^*$ and $\|\,\|_{\infty}^*$  as 
defined in~\eqref{eq-extracond2}. Recall that $v_\Delta = v_X \circ \pi$ and 
 $w_\Delta = w_X \circ \pi$. The proofs of the following results are postponed to Subsection~\ref{subsec-someproofs}.

\begin{lemma}~\label{lemma-Cn}
Assume that (H1) holds.
Let $v_X, w_X:X\to\R$ such that $\|v_X\|^*_{\infty}<\infty$ and  $ \| w_X\|_{L^\infty(\mu_X)}<\infty$. Then there exists $C>0$ such that for any $n\geq 0$,
\[
| \int_\Delta C_n v_\Delta w_\Delta \, d\mu_\Delta |\leq C \mu_0(\varphi>n) \|v_X\|^*_{\infty}   \| w_X\|_{L^\infty(\mu_X)}.
\]
\end{lemma}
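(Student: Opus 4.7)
The plan is to reduce the left hand side to an integral on the base $Y$ via transfer-operator duality, and then use the weighted norm $\|\cdot\|_\infty^*$ together with (H1) to absorb the sum over the tower levels. Since $L_\Delta$ is the transfer operator of $T_\Delta$ with respect to $\mu_\Delta$,
\[
\int_\Delta C_n v_\Delta \, w_\Delta \, d\mu_\Delta = \int_\Delta 1_{\{\varphi_\Delta > n\}\setminus Y}\, v_\Delta \, (w_\Delta \circ T_\Delta^n) \, d\mu_\Delta.
\]
A point $(y,i)$ in $\{\varphi_\Delta > n\}\setminus Y$ has $i \geq 1$ and $\varphi(y) > n+i$, and $T_\Delta^n(y,i) = (y, n+i)$, so $\pi(T_\Delta^n(y,i)) = f^{n+i}(y)$. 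Using $\mu_\Delta(A,i) = \mu_0(A)$, I would unfold the integral as
\[
\int_\Delta C_n v_\Delta \, w_\Delta \, d\mu_\Delta = \sum_{i\geq 1} \int_{\{\varphi > n+i\}} v_X(f^iy)\, w_X(f^{n+i}y)\, d\mu_0(y).
\]

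Now apply the pointwise bounds $|v_X(f^iy)| \leq \|v_X\|_\infty^* \, \tau^*(f^iy)^{-(1+\eps)}$ and $|w_X| \leq \|w_X\|_{L^\infty(\mu_X)}$, swap sum and integral, and extend the range to $i \in [1,\varphi(y)-1]$ (at the cost of enlarging $\{\varphi > n+i\}$ to $\{\varphi > n\}$):
\[
\Bigl|\int_\Delta C_n v_\Delta w_\Delta \, d\mu_\Delta\Bigr| \leq \|v_X\|_\infty^* \|w_X\|_{L^\infty(\mu_X)} \int_{\{\varphi > n\}} \Bigl(\sum_{i=1}^{\varphi(y)-1} \tau^*(f^iy)^{-(1+\eps)}\Bigr)\, d\mu_0(y).
\]

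The heart of the argument, and the only place one has to do any real work, is bounding the inner sum in terms of $\rho(y)$. Split $[1,\varphi(y)-1]$ using the successive first returns $0=\tau_0 < \tau_1 < \ldots < \tau_{\rho(y)} = \varphi(y)$ of $y$ to $Y$. For $i$ with $\tau_k \leq i < \tau_{k+1}$, we have $f^iy \in Y$ only at $i=\tau_k$ (where $\tau^*=1$), and otherwise $\tau^*(f^iy) = \tau_{k+1}-i+1$ takes each value in $\{2,3,\ldots,\tau_{k+1}-\tau_k\}$ exactly once. Hence
\[
\sum_{i=\max(\tau_k,1)}^{\tau_{k+1}-1} \tau^*(f^iy)^{-(1+\eps)} \leq 1 + \sum_{j\geq 2} j^{-(1+\eps)} =: C_\eps,
\]
and summing over the $\rho(y)$ intervals gives $\sum_{i=1}^{\varphi(y)-1} \tau^*(f^iy)^{-(1+\eps)} \leq C_\eps \rho(y)$. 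Plugging this back and applying (H1), which says precisely that $\int_{\{\varphi>n\}} \rho\, d\mu_0 \leq C \mu_0(\varphi > n)$, yields the desired bound with constant $C C_\eps$. The main obstacle is purely the combinatorial bookkeeping of $\tau^*$ along the tower orbit; the role of the weight $\tau^{*\,1+\eps}$ in the definition of $\|\cdot\|_\infty^*$ is exactly to make this sum summable on each sub-interval, so that (H1) can then convert the factor $\rho(y)$ into the target tail $\mu_0(\varphi > n)$.
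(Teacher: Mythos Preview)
Your proof is correct and follows essentially the same route as the paper: duality to pass from $C_n$ to an integral over the base, then splitting the sum $\sum_{i=1}^{\varphi(y)-1}$ into blocks $[\tau_k,\tau_{k+1})$, bounding each block by a constant $C_\eps$ via the weight $\tau^{*\,1+\eps}$, and finally invoking (H1) to convert $\int_{\{\varphi>n\}}\rho\,d\mu_0$ into $\mu_0(\varphi>n)$. Your bookkeeping of $\tau^*(f^iy)$ on each block is in fact slightly more careful than the paper's, which writes the bound as $(\tau_{k+1}-\tau_k-j)^{-(1+\eps)}$ without separating the case $i=\tau_k$.
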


\begin{lemma}\label{lemma-An}
Assume that (H1) holds. Let $v_Y:Y \to \R$
and $w_X:X\to\R$ be such that $\| v_Y \|_{L^\infty(\mu_0)}<\infty$  and $\|w_X\|^*_{\infty}<\infty$. Then there exists $C>0$ such that for any $n\geq 0$,
\[
\Big|\int_\Delta \sum_{j\geq n} A_j v_Y  w_\Delta \, d\mu_\Delta\Big|\leq C \mu_0(\varphi>n) \|v_Y\|_{L^\infty(\mu_0)}  \| w_X \|_{\infty}^*.
\]
\end{lemma}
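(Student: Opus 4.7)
The plan is to reduce the operator sum, via the transfer-operator duality, to an orbit-sum on the base $Y_0$, and then exploit the excursion structure of the orbit together with hypothesis (H1).

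By duality and the fact that $T_\Delta^j(y,0)=(y,j)$ whenever $\varphi(y)>j$,
\[
\int_\Delta A_j v_Y \cdot w_\Delta\,d\mu_\Delta=\int_\Delta L_\Delta^j(1_{\{\varphi>j\}}v_Y)\,w_\Delta\,d\mu_\Delta=\int_{\{\varphi>j\}} v_Y(y)\,w_X(f^j y)\,d\mu_0(y),
\]
using that $w_\Delta(y,j)=w_X(\pi(y,j))=w_X(f^j y)$. Summing over $j\geq n$ and interchanging the (pointwise finite) sum with the integral yields
\[
\sum_{j\geq n}\int_\Delta A_j v_Y\,w_\Delta\,d\mu_\Delta=\int_{\{\varphi>n\}} v_Y(y)\sum_{j=n}^{\varphi(y)-1}w_X(f^j y)\,d\mu_0(y).
\]

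Next I would use $|w_X(x)|\leq \|w_X\|^*_\infty\,\tau^*(x)^{-(1+\eps)}$ to bound the inner sum, partitioning the index range $[n,\varphi(y)-1]$ according to the excursions $[\tau_k(y),\tau_{k+1}(y))$, $k=0,\ldots,\rho(y)-1$, of the orbit of $y$ from $Y$ back to $Y$. On such an excursion of length $T=\tau_{k+1}-\tau_k$, one has $\tau^*(f^{\tau_k}y)=1$ and $\tau^*(f^iy)=1+\tau_{k+1}-i$ for $\tau_k<i<\tau_{k+1}$, hence
\[
\sum_{i=\tau_k}^{\tau_{k+1}-1}\frac{1}{\tau^*(f^iy)^{1+\eps}}\leq 1+\sum_{m=2}^\infty \frac{1}{m^{1+\eps}}=:C_\eps<\infty,
\]
where finiteness crucially uses $\eps>0$. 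Summing over the at most $\rho(y)$ complete excursions intersecting $[n,\varphi(y)-1]$ (clipping at the endpoints absorbed into the constant) gives
\[
\sum_{j=n}^{\varphi(y)-1}\tau^*(f^jy)^{-(1+\eps)}\leq C_\eps\,\rho(y).
\]

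Combining these estimates and invoking (H1),
\[
\Big|\sum_{j\geq n}\int_\Delta A_j v_Y\,w_\Delta\,d\mu_\Delta\Big|\leq C_\eps\,\|v_Y\|_{L^\infty(\mu_0)}\|w_X\|^*_\infty\int_{\{\varphi>n\}}\rho\,d\mu_0\leq C\,\mu_0(\varphi>n)\,\|v_Y\|_{L^\infty(\mu_0)}\|w_X\|^*_\infty,
\]
which is the claim. Conceptually, the weight $\tau^{*\,1+\eps}$ in the norm $\|\cdot\|^*_\infty$ is calibrated to pay for a single excursion away from $Y$, so the total cost along an orbit from time $n$ up to $\varphi(y)$ is controlled by the number of excursions $\rho(y)$; hypothesis (H1) is precisely what is needed to convert this orbit-wise bound into the tail estimate $\mu_0(\varphi>n)$. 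I do not foresee a genuine obstacle; the only care points are tracking the excursion structure carefully and remembering that $\eps>0$ is essential for summability.
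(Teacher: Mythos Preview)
Your proof is correct and follows essentially the same approach as the paper: reduce via duality to an orbit sum over the base, split the range $[n,\varphi(y)-1]$ into the excursions $[\tau_k(y),\tau_{k+1}(y))$, use the weight $\tau^{*\,1+\eps}$ to bound each excursion by a uniform constant $C_\eps$, and finish with (H1). Your computation of $\tau^*(f^i y)$ along an excursion is in fact slightly more careful than the paper's corresponding estimate~\eqref{eq:w*}.
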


For the statement below we note that  by definition, $B_nv_\Delta$ is a function supported on the base $Y$ of the tower $\Delta$,
so $\|B_nv_\Delta\|_\theta$ makes sense.

\begin{lemma}\label{lemma-Bn2} Assume that (H1) holds. Let $v_X:X\to\R$ such that $\|v_X\|_{\theta}^*<\infty$.
Then there exists $C>0$ such that for any $n\geq 0$,
\[
\sum_{j \geq n} \| B_j v_\Delta\|_\theta \leq C \mu_0(\varphi > n) \| v_X\|_\theta^* .
\]

\end{lemma}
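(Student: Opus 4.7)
The plan is to give an explicit formula for $B_n v_\Delta$ on $Y$ in terms of the Gibbs Markov transfer operator $R^*$, bound the result elementwise on $a \in \alpha$ using the weighted norms, and then exchange the order of summation so that (H1) becomes applicable.

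For $n \geq 1$, the set $\{\varphi_\Delta = n\} \setminus Y$ consists precisely of points $(y,\varphi(y)-n)$ with $\varphi(y) > n$, and $T_\Delta^n$ sends such a point to $(Fy,0) \in Y$. Writing the action of $L_\Delta^n$ via duality and using that $F:a\to Y$ is a bijection $\bmod\ \mu_0$, I obtain, for $\mu_0$-a.e.\ $y' \in Y$,
\[
B_n v_\Delta(y') \;=\; \sum_{a\in\alpha,\,\varphi(a)>n} e^{p(y_a)} \cdot v_X\!\left(f^{\varphi(a)-n}(y_a)\right),
\]
where $y_a\in a$ is the unique preimage of $y'$ under $F|_a$; in other words, $B_n v_\Delta = R^*\bigl(1_{\{\varphi>n\}}\cdot (v_X\circ f^{\varphi-n})\bigr)$. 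For the $L^\infty$-bound I apply \eqref{eq:locLip} together with the definition of $\|v_X\|_\infty^*$, noting by (H2) that $\tau^*\circ f^i(a)$ is well-defined. For the Lipschitz seminorm, I compare values at $y',y''\in Y$ and split $|e^{p(y_a(y'))}v_X\circ f^{\varphi(a)-n}(y_a(y'))-e^{p(y_a(y''))}v_X\circ f^{\varphi(a)-n}(y_a(y''))|$ in the usual way: one term is controlled by $|e^{p(y_a(y'))}-e^{p(y_a(y''))}|\leq C_1\mu_0(a)\theta^{s(y_a(y'),y_a(y''))}$ and the other by $|v_X\circ f^{\varphi(a)-n}(y_a(y'))-v_X\circ f^{\varphi(a)-n}(y_a(y''))|\leq |v_X|_\theta^*\theta^{s(y_a(y'),y_a(y''))}/(\tau^*\circ f^{\varphi(a)-n}(a))^{1+\eps}$. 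Using $s(y_a(y'),y_a(y''))=1+s(y',y'')$, this yields
\[
\|B_n v_\Delta\|_\theta \;\leq\; C\,\|v_X\|_\theta^* \sum_{a:\varphi(a)>n} \frac{\mu_0(a)}{(\tau^*\circ f^{\varphi(a)-n}(a))^{1+\eps}}.
\]

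Summing over $j\geq n$ and interchanging the order of summation gives
\[
\sum_{j\geq n}\|B_j v_\Delta\|_\theta \;\leq\; C\,\|v_X\|_\theta^* \sum_{a:\varphi(a)>n}\mu_0(a)\sum_{k=1}^{\varphi(a)-n}\frac{1}{(\tau^*\circ f^k(a))^{1+\eps}}.
\]
The combinatorial heart of the argument is the inequality $\sum_{k=1}^{\varphi(a)} 1/(\tau^*\circ f^k(a))^{1+\eps}\leq C_\eps\,\rho(a)$, which I would derive by partitioning the levels $1\leq k\leq \varphi(a)$ according to the intermediate first-return times $0=\tau_0<\tau_1<\cdots<\tau_{\rho(a)}=\varphi(a)$ of $a$ to $Y$ (well-defined uniformly on $a$ by (H2)): within the $j$-th excursion $(\tau_{j-1},\tau_j]$ one has $\tau^*\circ f^{\tau_{j-1}+m}(a)=\tau_j-\tau_{j-1}-m+1$, so the contribution per excursion is at most $\zeta(1+\eps)$. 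Inserting this bound and applying (H1) in the form $\int_{\{\varphi>n\}}\rho\,d\mu_0\leq C\mu_0(\varphi>n)$ closes the proof; the case $n=0$ is immediate since $B_0 v_\Delta=1_Y v_X$.

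The main obstacle I anticipate is the bookkeeping between tower levels and the intermediate first-return times: converting the per-level factor $(\tau^*\circ f^k(a))^{-(1+\eps)}$ into a clean upper bound proportional to $\rho(a)$ is where (H2) is essential, and it is precisely this conversion that bridges the weighted norm $\|\cdot\|_\theta^*$ on the one hand and the tail-of-$\rho$ estimate (H1) on the other.
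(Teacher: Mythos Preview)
Your proposal is correct and follows essentially the same approach as the paper: write $B_n v_\Delta$ as $R^*$ applied to $1_{\{\varphi>n\}}\,(v_X\circ f^{\varphi-n})$, bound each $a$-contribution by $C\mu_0(a)(\tau^*\circ f^{\varphi(a)-n}(a))^{-(1+\eps)}\|v_X\|_\theta^*$ using \eqref{eq:locLip} and the definition of $\|\cdot\|_\theta^*$, then swap the order of summation, partition the levels of each $a$ according to the intermediate first returns $\tau_0<\cdots<\tau_{\rho(a)}$, bound each excursion by $\zeta(1+\eps)$, and finish with (H1). The paper phrases the same computation via auxiliary operators $B_{j,a}$ and tower-level weighted norms $\|\cdot\|_{\Delta,\theta}^*$, but the content is identical; your explicit treatment of the $n=0$ case ($B_0v_\Delta=1_Yv_X$) is a small tidiness the paper leaves implicit.
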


\begin{remark}\label{rmk-suppY2}
Continuing from Remark~\ref{rmk-suppY1}, we note that
under the assumption that $v_X, w_X$ are supported on $Y$, all the statements 
in this subsection simplify since the weighted norms 
$\|\,\|_{\theta}^*$ and $\|\,\|_{\infty}^*$ coincide with $\|\,\|_{\theta}$ and $\|\,\|_{L^\infty(\mu_0)}$.
Moreover, under this assumption, the proofs in this subsection
are simplified, although the assumption (H1) is still required. In Subsection~\ref{subsec-someproofs} we point out such a simplification for 
the proof of Lemma~\ref{lemma-Cn}: see Remark~\ref{rmk-suppY3}.
\end{remark}

\subsection{Proofs  of  Theorem~\ref{prop-corel-finite} and Theorem~\ref{prop-corel-infinite}}
\label{subsec-strat}

From the statement of Theorem~\ref{prop-corel-finite}, recall that $d\mu=\frac{1}{\bar\varphi} d\mu_X$ is the 
normalized $f$-invariant measure (when $\bar\varphi<\infty$).
By~\eqref{eq:muDeltamuX} and the definition of $\mu$, in order to estimate the correlation function for observables on the space $X$,
in the \emph{finite} measure case it suffices to estimate
\begin{equation}\label{eq-opcorelfin}
\int_X v_X w_X \circ f^n \, d\mu=
\frac{1}{\bar\varphi}\int_\Delta v_\Delta w_\Delta \circ T_\Delta^n \, d\mu_\Delta=
\frac{1}{\bar\varphi}\int_\Delta L_\Delta^n v_\Delta w_\Delta  \, d\mu_\Delta.
\end{equation}
Similarly, due to~\eqref{eq:muDeltamuX} in the \emph{infinite} measure case we estimate
\begin{equation}
\label{eq-opcorelinf}
\int_X v_X w_X \circ f^n \, d\mu_X=\int_\Delta L_\Delta^n v_\Delta w_\Delta  \, d\mu_\Delta.
\end{equation}
By equation~\eqref{eq-LDelta} and Lemma~\ref{lemma-Cn},
\begin{align}
\label{eq-LnCn}
\int_\Delta L_\Delta^n v_\Delta  w_\Delta \, d\mu_\Delta=
\int_\Delta\sum_{n_1+n_2+n_3=n} A_{n_1}T^*_{n_2}B_{n_3}v_\Delta  w_\Delta \, d\mu_\Delta + O(\mu_0(\varphi>n) \|v_X\|^*_{\infty}  \| w_X \|_{L^\infty(\mu_X)}).
\end{align}
In order to take advantage of the full force of Lemmas~\ref{lemma-Gouezel} and~\ref{lemma-MT} along with the estimates related to $A_n, B_n$ in Subsection~\ref{subsec-new}
we define
\begin{align*}
 A(z):=\sum_{n\geq 0} A_n z^n, \quad B(z):=\sum_{n\geq 0} B_n z^n,\qquad z\in\D
\end{align*}
and recall that when viewed as a family of operators acting on $\B_\theta(Y)\subset L^\infty(\mu_0)$,
$T^*(z)=\sum_{n\geq 0} T^*_n z^n$ is well defined on $\D$ (in fact on $\bar\D$, but we do not use this
information below). 
By  definition, the operator sequences
$A_n: L^\infty(\mu_0)\to L^1(\mu_\Delta)$,
$B_n: L^\infty(\mu_\Delta)\to L^\infty(\mu_0)$, $T_n^*: L^\infty(\mu_0)\to L^\infty(\mu_0)$  are bounded. 
 As a consequence,
$ A(z)T^*(z)B(z), z\in\D$,
is well defined as a family of operators from $L^\infty(\mu_\Delta)$ to $L^1(\mu_\Delta)$. Given this we can view 
\[
\int_\Delta G_n v_\Delta  w_\Delta \, d\mu_\Delta:=\int_\Delta\sum_{n_1+n_2+n_3=n} A_{n_1}T^*_{n_2}B_{n_3}v_\Delta  w_\Delta \, d\mu_\Delta
\]
as the $n$-th coefficient of $\int_\Delta G(z) v_\Delta  w_\Delta \, d\mu_\Delta:=\int_\Delta A(z)T^*(z)B(z) v_\Delta  w_\Delta \, d\mu_\Delta, z\in\D$.

In the sequel we also use the following statement on $A(1)$ and $B(1)$ that does not require (H1); the statement on $B(1)$ below relies on the fact that
$B(1) v_\Delta$ is a function supported on $Y$.
\begin{lemma}\label{lemma-of1} We have
$$
\begin{cases}
\int_\Delta A(1) 1_{Y} w_\Delta d\mu_\Delta 
= \int_\Delta w_\Delta d\mu_\Delta=\int_X w_X d\mu_X,
\\[2mm]
\int_\Delta B(1) v_\Delta d\mu_\Delta =\int_Y B(1) v_\Delta d\mu_0=
 \int_\Delta v_\Delta d\mu_\Delta=\int_X v_X d\mu_X.
\end{cases}
$$
\end{lemma}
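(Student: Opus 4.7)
The plan is to reduce both identities to bookkeeping on the tower, via the adjoint relation
$\int_\Delta L_\Delta^n f\cdot g\,d\mu_\Delta = \int_\Delta f\cdot g\circ T_\Delta^n\,d\mu_\Delta$,
applied termwise. The final equalities $\int_\Delta w_\Delta\,d\mu_\Delta = \int_X w_X\,d\mu_X$ (and the analogue for $v$) are just the change of variables $\mu_X = \mu_\Delta\circ\pi^{-1}$ together with $w_\Delta=w_X\circ\pi$, so the content is in rewriting the sums $\sum_n\int A_n 1_Y\cdot w_\Delta\,d\mu_\Delta$ and $\sum_n\int B_n v_\Delta\,d\mu_\Delta$ as $\int_\Delta w_\Delta\,d\mu_\Delta$ and $\int_\Delta v_\Delta\,d\mu_\Delta$ respectively.

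For the $A$-identity, the first step would be the termwise computation
\begin{equation*}
\int_\Delta A_n 1_Y\cdot w_\Delta\,d\mu_\Delta = \int_\Delta 1_{\{\varphi>n\}}\cdot w_\Delta\circ T_\Delta^n\,d\mu_\Delta = \int_{\{\varphi>n\}} w_\Delta(y,n)\,d\mu_0(y),
\end{equation*}
using that $1_{\{\varphi>n\}}$ is supported in the base $Y_0=Y$, that $T_\Delta^n(y,0)=(y,n)$ whenever $\varphi(y)>n$, and that $\mu_\Delta|_Y=\mu_0$. Summing over $n\ge 0$ and interchanging sum and integral by Fubini produces
$\int_Y \sum_{n=0}^{\varphi(y)-1} w_\Delta(y,n)\,d\mu_0(y)$,
which is precisely $\int_\Delta w_\Delta\,d\mu_\Delta$ by the very definition of $\mu_\Delta$ level by level on the tower.

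For the $B$-identity, the first equality $\int_\Delta B(1) v_\Delta\,d\mu_\Delta = \int_Y B(1) v_\Delta\,d\mu_0$ is immediate since each $B_n v_\Delta$ is supported in $Y$ by definition. Duality then yields, for each $n\ge 1$,
\begin{equation*}
\int_\Delta B_n v_\Delta\,d\mu_\Delta = \int_\Delta 1_{\{\varphi_\Delta=n\}\setminus Y}\cdot v_\Delta\cdot (1_Y\circ T_\Delta^n)\,d\mu_\Delta,
\end{equation*}
and the key observation is that a point $(y,\varphi(y)-n)\in\{\varphi_\Delta=n\}\setminus Y$ is carried by $T_\Delta^n$ to $(Fy,0)\in Y$, so $1_Y\circ T_\Delta^n\equiv 1$ on that set; the integral simplifies to $\int_{\{\varphi_\Delta=n\}\setminus Y} v_\Delta\,d\mu_\Delta$. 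Combining this with the $n=0$ contribution $\int_Y v_\Delta\,d\mu_0$ and using that the sets $\{\varphi_\Delta=n\}$ for $n\ge 1$ partition $\Delta$ with $\{\varphi_\Delta=n\}\cap Y=\{\varphi=n\}$, all pieces reassemble into $\int_\Delta v_\Delta\,d\mu_\Delta$.

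No substantive obstacle appears: the lemma is just a conservation identity expressing that summing the \emph{height-above-base} contributions $A_n 1_Y$, resp.\ the \emph{time-before-next-return-to-base} contributions $B_n v_\Delta$, recovers $\int_\Delta(\cdot)\,d\mu_\Delta$ by the natural partitions of $\Delta$ by tower level or by distance to the next hit of $Y$. The only mild care needed is in interchanging sum and integral, which is routine: in the contexts where the lemma is applied, $v_X$ and $w_X$ are bounded via the weighted norms of \eqref{eq-extracond2}, and the partial sums are dominated either by $\mu_\Delta(\Delta)=\bar\varphi$ in the finite-measure case or by the tail bounds of Lemmas \ref{lemma-An} and \ref{lemma-Bn2} in the infinite-measure case.
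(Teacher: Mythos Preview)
Your proof is correct. For the $A$-identity it is essentially the paper's argument verbatim: dualize, use that $1_{\{\varphi>n\}}$ lives on the base, and sum over levels.

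For the $B$-identity you take a somewhat different route from the paper. The paper decomposes $1_{\{\varphi_\Delta=k\}}v_\Delta$ over levels $Y_j$, pulls each piece down to a function $u_k$ on the base, and rewrites $1_Y L_\Delta^k v_k$ as $R^* u_k$; the identity then follows from $\int_Y R^* u_k\,d\mu_0 = \int_Y u_k\,d\mu_0$. You instead integrate $B_n v_\Delta$ against $1_\Delta$, dualize once, and use that $T_\Delta^n$ maps $\{\varphi_\Delta=n\}\setminus Y$ into $Y$ so that the factor $1_Y\circ T_\Delta^n$ disappears; the sum over $n\ge 1$ together with the $n=0$ base term then reassembles $\int_\Delta v_\Delta\,d\mu_\Delta$ via the partition of $\Delta$ by $\varphi_\Delta$. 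Your argument is more elementary: it avoids introducing $R^*$ and the auxiliary functions $u_k$, and makes the ``conservation'' nature of the identity completely transparent. The paper's version, on the other hand, exhibits the relation $1_Y L_\Delta^k(1_{\{\varphi_\Delta=k\}}v_\Delta) = R^* u_k$, which could be useful elsewhere but is not needed for this lemma.
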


\subsubsection{Finite measure case}

 Let $E_n:\B_\theta(Y)\to \B_\theta(Y)$ be as in the statement of
Lemma~\ref{lemma-Gouezel} and put $E(z)=\sum_n E_n z^n$, $z\in\D$. 
By  Lemma~\ref{lemma-Gouezel},
\begin{equation}\label{eq-decomp}
\int_\Delta  G(z) v_\Delta w_\Delta  \, d\mu_\Delta =I_{main}(z)(v_\Delta, w_\Delta) +I_E(z)(v_\Delta, w_\Delta)
\end{equation}
for
$$
\begin{cases}
I_{main}(z) (v_\Delta, w_\Delta):=\\\quad\quad  \frac{1}{1-z} \int_\Delta A(z) \Big(\frac{P}{\bar\varphi}+(1-z)\sum_{n=0}^\infty( \frac{1}{\bar\varphi^2}
\sum_{k=n+1}^\infty \sum_{j=k+1}^\infty P R_j P) z^n\Big)
B(z) v_\Delta w_\Delta  \, d\mu_\Delta,\\[2mm]
 I_E(z)(v_\Delta, w_\Delta) := \int_\Delta A(z) E(z) B(z) v_\Delta w_\Delta  \, d\mu_\Delta.
\end{cases}
$$
By the above, in order to estimate $\int_\Delta G_n v_\Delta  w_\Delta \, d\mu_\Delta$, we need to estimate the coefficients of the functions
$I_{main}(z)(v_\Delta, w_\Delta)$ and $I_E(z)(v_\Delta, w_\Delta)$, $z\in\D$ for
appropriate $v_\Delta, w_\Delta$ (equivalently for appropriate $v_X, w_X$).
Let 
\begin{align}\label{eq-Vz}
 V(z)(v_\Delta, w_\Delta) :=
\int_\Delta A(1)\Big(\sum_{n=0}^\infty( \frac{1}{\bar\varphi^2}\sum_{k=n+1}^\infty \sum_{j=k+1}^\infty P R_j P) z^n\Big) B(1) v_\Delta w_\Delta  \, d\mu_\Delta
\end{align}
and note  that
\begin{align}\label{eq-decomp_main}
\nonumber I_{main}(z)(v_\Delta, w_\Delta)-&\frac{1}{1-z} \int_\Delta A(1) \frac{P}{\bar\varphi}B(1) v_\Delta w_\Delta  \, d\mu_\Delta-V(z)(v_\Delta, w_\Delta) \\
&= I^A(z)(v_\Delta, w_\Delta) + I^B(z)(v_\Delta, w_\Delta)
\end{align}
for
$$
\begin{cases}
I^A(z)(v_\Delta, w_\Delta) :=\int_\Delta \frac{A(z)-A(1)}{1-z}  \Big(\frac{P}{\bar\varphi}+
(1-z)\sum_{n=0}^\infty( \frac{1}{\bar\varphi^2}\sum_{k=n+1}^\infty \sum_{j=k+1}^\infty 
P R_j P) z^n\Big)B(1) v_\Delta w_\Delta  \, d\mu_\Delta\\[2mm]
I^B(z)(v_\Delta, w_\Delta) := \int_\Delta A(1)\Big(\frac{P}{\bar\varphi}+
(1-z)\sum_{n=0}^\infty( \frac{1}{\bar\varphi^2}\sum_{k=n+1}^\infty \sum_{j=k+1}^\infty P R_j P) z^n\Big)\frac{B(z)-B(1)}{1-z}  v_\Delta w_\Delta  \, d\mu_\Delta\end{cases}
$$
Below we provide the estimates obtained in the sequel for the coefficients of the terms
in~\eqref{eq-Vz} and~\eqref{eq-decomp_main} and as such complete

\begin{proof}[Proof of Theorem~\ref{prop-corel-finite}]
By Lemma~\ref{lemma-of1}, 
\[
\frac{1}{1-z}\int_\Delta A(1)\frac{P}{\bar\varphi} B(1)
 v_\Delta w_\Delta  \, d\mu_\Delta=\frac{1}{\bar\varphi}\int_X v_X\,d\mu_X\int_X w_X\,d\mu_X\sum_{n=0}^\infty z^n.
\] 
By Lemma~\ref{lemma-first2main}, the coefficients $V_n(v_\Delta, w_\Delta)$ of $V(z)(v_\Delta, w_\Delta)$, $z\in\D$ are given by
\[
V_n(v_\Delta, w_\Delta)=\frac{1}{\bar\varphi^2}\sum_{k=n+1}^\infty \mu_0(\varphi>k)  \int_X v_X\, d\mu_X \int_X w_X  \, d\mu_X.
\]
We continue with the estimates for  the coefficients of the terms
in the RHS of~\eqref{eq-decomp_main}.
 Lemmas~\ref{lemma-thirdmain}
and~\ref{lemma-lastmain} together with (H0) a) imply that the coefficients of the functions $I^A(z)(v_\Delta, w_\Delta)$ and $I^B(z)(v_\Delta, w_\Delta)$, $z\in\D$
are  $O(\| v_X \|_{\theta}^*\, \| w_X \|_{\infty}^*\,\mu_0(\varphi>n))=O(n^{-\beta}\| v_X \|_{\theta}^*\, \| w_X \|_{\infty}^*)$.

It remains to estimate the  coefficients
$\int_\Delta \sum_{n_1+n_2+n_3=n} A_{n_1}E_{n_2}B_{n_3} v_\Delta w_\Delta  \, d\mu_\Delta$ of the function $I_E(z)$, $z\in\D$.
By Lemma~\ref{lemma-Bn2} and (H0) a), $\|B_nv_\Delta\|_\theta=O(\mu_0(\varphi > n)\|v_X\|_\theta^*)=O(n^{-\beta}\|v_X\|_\theta^*)$.
Hence,  the convolution of $E_n$ and $B_n$ satisfies
$\|\sum_{n_2+n_3=n}E_{n_2}B_{n_3} v_\Delta\|_\theta=O(\|E_n\|_\theta \cdot \|v_X\|_\theta^*)$. 
Next,  given that $v_Y$ is a function supported on the base $Y$, the definition of $A_n$ and (H0) a) gives
\[
|\int_\Delta A_n v_Y w_\Delta \, d\mu_\Delta| \ll \mu_0(\varphi > n) \|w_\Delta\|_{L^\infty(\mu_\Delta)}\|v_Y\|_{L^\infty(\mu_0)}
\ll n^{-\beta} \|w_X\|_{L^\infty(\mu_X)}\|v_Y\|_\theta.
\]
Hence, $|\int_\Delta \sum_{n_1+n_2+n_3=n} A_{n_1}E_{n_2}B_{n_3} v_\Delta w_\Delta  \, d\mu_\Delta|=O(\|E_n\|_\theta \cdot \|v_X\|_\theta^*\cdot \|w_X\|_{L^\infty(\mu_X)})$.

Putting the above together and using~\eqref{eq-decomp_main} and~\eqref{eq-LnCn},
\begin{align*}
\int_\Delta v_\Delta w_\Delta \circ T_\Delta^n \, d\mu_\Delta&=\frac{1}{\bar\varphi}\int_X v_X\,d\mu_X\int_X w_X\,d\mu_X
+\frac{1}{\bar\varphi^2}\sum_{k=n+1}^\infty \mu_0(\varphi>k)  \int_X v_X\, d\mu_X \int_X w_X  \, d\mu_X\\
&\quad +O(\|E_n\|_\theta \cdot \|v_X\|_\theta^*\cdot \|w_X\|_\infty^*).
\end{align*}
The conclusion follows recalling that $d\mu=\frac{1}{\bar\varphi}d\mu_X$ and using equation~\eqref{eq-opcorelfin}.~\end{proof}

{\bf Infinite measure case.} 
Write
\begin{equation}\label{eq-decomp1}
\int_\Delta G(z)  v_\Delta w_\Delta \, d\mu_\Delta -
\int_\Delta A(1)T^*(z)B(1)
 v_\Delta w_\Delta  \, d\mu_\Delta = I_{\inf}^A(z) + I_{\inf}^B(z)
\end{equation}
for
$$
\begin{cases}
 I_{\inf}^A(z)(v_\Delta, w_\Delta) := \int_\Delta (A(z)-A(1)) T^*(z) B(1) v_\Delta w_\Delta  \, d\mu_\Delta,\\[2mm]
I_{\inf}^B(z)(v_\Delta, w_\Delta) := \int_\Delta A(1)T^*(z) (B(z)-B(1)) v_\Delta w_\Delta  \, d\mu_\Delta.
\end{cases}
$$
Below, we provide the estimates obtained in the sequel for the coefficients of the terms in~\eqref{eq-decomp1} and as such
complete
\begin{proof}[Proof of Theorem~\ref{prop-corel-infinite}]
By Lemma~\ref{lemma-MT}, the $n$-th coefficient $[\int_\Delta A(1)T^*B(1)
 v_\Delta w_\Delta  \, d\mu_\Delta]_n$ of the function $\int_\Delta A(1)T^*(z)B(1)
 v_\Delta w_\Delta  \, d\mu_\Delta$, $z\in\D$,  satisfies
\begin{align*}
[\int_\Delta A(1)T^*B(1)
 v_\Delta w_\Delta  \, d\mu_\Delta]_n&=
(d_0n^{\beta-1}+\ldots +d_{q-1} n^{q(\beta-1)})\int_\Delta A(1)P B(1)
 v_\Delta w_\Delta  \, d\mu_\Delta\\
&+\int_\Delta A(1)D_n B(1) v_\Delta w_\Delta  \, d\mu_\Delta,
\end{align*}
where $\|D_n\|_\theta=O(n^{-(\beta-1/2)})$. 
By Lemma~\ref{lemma-Bn2}, $\|B(1)v_\Delta\|_\theta\leq C\| v_X \|_\theta^*$  for some $C>0$. Hence,
\begin{align*}
|\int_\Delta A(1)D_n B(1) v_\Delta w_\Delta  \, d\mu_\Delta|&=O(n^{-(\beta-1/2)}\|w_\Delta\|_{L^\infty(\mu_\Delta)}
\|B(1)v_\Delta\|_\theta)\\
&=O(n^{-(\beta-1/2)}\|w_X\|_{L^\infty(\mu_X)}
 \| v_X \|_\theta^*).
\end{align*}
By Lemma~\ref{lemma-of1}, $\int_\Delta A(1)P B(1)
 v_\Delta w_\Delta  \, d\mu_\Delta=\int_X v_X\,d\mu_X \int_X w_X\,d\mu_X$. Putting these together,
\begin{align*}
[\int_\Delta A(1)T^*(z)B(1)
 v_\Delta w_\Delta  \, d\mu_\Delta]_n&=
(d_0n^{\beta-1}+\ldots +d_{q-1} n^{q(\beta-1)})\int_X v_X\,d\mu_X \int_X w_X\,d\mu_X\\
&+O(n^{-(\beta-1/2)}\|w_X\|_{L^\infty(\mu_X)}
 \| v_X \|_\theta^*).
\end{align*}

By Lemma~\ref{lemma-IAn-IBn} the coefficients of the functions $I_{\inf}^A(z)(v_\Delta, w_\Delta)$ and 
$I_{\inf}^B(z)(v_\Delta, w_\Delta)$, $z\in\D$, are  $O(n^{-\beta} \|v_X\|_{\theta}^*\|w_X\|_{\infty}^*)$.

Putting the above together and using equations~\eqref{eq-decomp1} and~\eqref{eq-LnCn},
\begin{align*}
\int_\Delta v_\Delta w_\Delta \circ T_\Delta^n \, d\mu_\Delta&=(d_0n^{\beta-1}+\ldots +d_{q-1} n^{q(\beta-1)})\int_X v_X\,d\mu_\Delta\int_X w_X\,d\mu_X \\
& \quad +O(n^{-\beta}\| v_X \|_\theta^*\,\|w_X\|_{\infty}^*).
\end{align*}
The conclusion follows from the above equation together with equation~\eqref{eq-opcorelinf}. ~\end{proof}

\subsection{Proofs of the lemmas in Subsections~\ref{subsec-new} and~\ref{subsec-strat}}
\label{subsec-someproofs}

\begin{proof}[Proof of Lemma~\ref{lemma-Cn}]
Recall that $\tau_0(y) = 0$ and 
$\tau_k(y) = \tau_{k-1}(y)+\tau(f^{\tau_{k-1}(y)}(y))$ is the $k$-th return time
to $Y$. Compute for $y \in Y$
\begin{align}\label{eq:w*}
\sum_{j=\tau_k(y)}^{\tau_{k+1}(y)-1} & | v_\Delta \circ T^j_\Delta(y,0)|
=
\sum_{j=0}^{\tau_{k+1}(y)-\tau_k(y)-1} | v_\Delta \circ T^j_\Delta(T^{\tau_k(y)}(y,0))|
\nonumber \\
&=
\sum_{j=0}^{\tau_{k+1}(y)-\tau_k(y)-1} | v_X \circ f^j(f^{\tau_k(y)}(y))|
\nonumber \\
&\leq \sum_{j=0}^{\tau_{k+1}(y)-\tau_k(y)-1} \| v_X \|^*_\infty (\tau_{k+1}(y)-\tau_k(y)-j)^{-(1+\eps)} \leq C_\eps  \|  v_X \|^*_\infty,
\end{align}
where $C_\eps = \sum_{j \geq 1} j^{-(1+\eps)}$. 
Thus,
\begin{align*}
\Big|\int_\Delta & C_n v_\Delta  w_\Delta \, d\mu_\Delta\Big|
= \Big|\int_\Delta L^n_\Delta(1_{\{ \varphi_\Delta > n\}\setminus Y}v_\Delta)  w_\Delta \, d\mu_\Delta \Big| \\
&\leq \int_{\{\varphi_\Delta > n\} \setminus Y}  |v_\Delta|   |w_\Delta \circ T^n_\Delta| \, d\mu_\Delta
\leq \| w_\Delta \|_{L^\infty(\mu_\Delta)} \int_{\{\varphi > n\} }  \sum_{j=1}^{\varphi(y)-n-1} |v_\Delta \circ T^j_\Delta| \, d\mu_0 \\
&\le
\| w_X \|_{L^\infty(\mu_X)}  \int_{\{ \varphi > n\} }  \sum_{k=0}^{\rho(y)-1} 
\sum_{j=\tau_k(y)}^{\tau_{k+1}(y)-1} |v_\Delta \circ T^j_\Delta| \, d\mu_0 \\
&\leq \| w_X\|_{L^\infty(\mu_X)}  C_\eps \| v_X\|^*_\infty \int_{\{\varphi > n\}} \rho(y)  \, d\mu_0 
\leq C C_\eps \mu_0(\varphi > n)  \| v_X\|^*_\infty\,  \| w_X\|_{L^\infty(\mu_X)},
\end{align*}
where the last inequality is obtained using (H1).
~\end{proof}

\begin{remark}\label{rmk-suppY3} Continuing from Remark~\ref{rmk-suppY2} we note the following.
If $v_X$ is supported on $Y$, then the sum $\sum_{j=\tau_k(y)}^{\tau_{k+1}(y)-1}  | v_\Delta \circ T^j_\Delta(y,0)|$
reduces to single term, namely $ | v_\Delta \circ T^{\tau_k(y)}_\Delta(y,0)|$. In this case the constant $C_\eps$ appearing in~\eqref{eq:w*}
disappears, but condition (H1) is still required.
\end{remark}

\begin{proof}[Proof of Lemma~\ref{lemma-An}]
Using \eqref{eq:w*} for $w_\Delta$ instead of $v_\Delta$ we find
\begin{align*}
\Big|\int_\Delta & \sum_{j=n}^\infty A_j v_Y  w_\Delta \, d\mu_\Delta\Big|
= \Big|\int_Y \sum_{j=n}^\infty 1_{\{ \varphi > j \} } v_Y  w_\Delta \circ T^j_\Delta \, d\mu_0\Big| \\
&\leq \int_{\{\varphi > n\}}  |v_Y| \ \sum_{j=n}^{\varphi(y)-1} 1_{\{\varphi > j\}}  |w_\Delta \circ T^j_\Delta| \, d\mu_0 \le
\|v_Y\|_{L^\infty(\mu_0)} \int_{\{\varphi > n\}}  \sum_{k=0}^{\rho(y)-1} 
\sum_{j=\tau_k(y)}^{\tau_{k+1}(y)-1} |w_\Delta \circ T^j_\Delta| \, d\mu_0 \\
&\le
\|v_Y\|_{L^\infty(\mu_0)} C_\eps \| w_X\|^*_\infty \int_{\{\varphi > n\}} \rho(y)  \, d\mu_0 
\leq C C_\eps \mu_0(\varphi > n) \|v_Y\|_{L^\infty(\mu_0)} \| w_X\|^*_\infty
\end{align*}
by (H1). This completes the proof.
\end{proof}

\begin{proof}[Proof of Lemma~\ref{lemma-Bn2}]
For the purpose of the argument below, we define weighted norms on the tower analogous to
\eqref{eq-extracond2}. Let
$\tau^*_\Delta(x) = 1+\min\{ i \geq 0 : T_\Delta^i(x) \in \hat Y\}$
for $\hat Y = \pi^{-1}(Y)$, and define
\begin{equation*}
\begin{cases}
\|v_\Delta \|_{\Delta,\infty}^*=\sup_{x\in \Delta}|v_\Delta(x)|\tau^*_\Delta(x)^{1+\epsilon}, \\[2mm]
|v_\Delta |_{\Delta,\theta}^* = \sup_{a \in \alpha} \sup_{0 \leq i < \varphi(a)} \sup_{x,x' \in a}
 \frac{\tau^*_\Delta(x,i) }{\theta^{s(x,x')}} |v_\Delta(x,i) - v_\Delta(x',i)|,
\end{cases}
\end{equation*}
and $\| v_\Delta \|_{\Delta, \theta}^* = \| v_\Delta \|_{\Delta,\infty}^* + 
| v_\Delta |_{\Delta,\theta}^*$.
In this way, $\| v_X \|_\theta^* = \| v_\Delta \|_{\Delta,\theta}^*$
whenever $v_\Delta = v_X \circ\pi$.
 
For $a \in \alpha$, $0 \leq j < \varphi(a)$, define
$B_{j,a}v_\Delta := L_\Delta^j(1_{\{ (y,i) : y \in a, i+j = \varphi(y)\}} v_\Delta)$.
The definition of $p_\Delta$ implies that for points on level $i$ of the tower, 
the potential $p_\Delta$ satisfies
$\sum_{j=0}^{\varphi(y)-i-1} p_\Delta \circ T^j_\Delta(y,i) = p(y)$.
Writing $y = F^{-1}(x) \cap a$ and  $y' = F^{-1}(x') \cap a$, we compute
using \eqref{eq:locLip}
\begin{eqnarray*}
\| B_{j,a}v_\Delta\|_{\theta} &=&
\sup_{x \in Y} e^{p(y)} |v_\Delta(y,\varphi(y)-j)| \\
&& \quad + 
\sup_{x,x' \in Y} \theta^{-s(x,x')} \left|e^{p(y)} v_\Delta(y, \varphi(y)-j)
- e^{p(y')} v_\Delta(y', \varphi(y')-j) \right| \\
&\le&
\mu_0(a) (\tau^* \circ f^{\varphi(a)-j}(a) )^{-(1+\eps)} \|v_\Delta\|_{\Delta,\infty}^* \\
&& \quad + 
\sup_{x,x' \in Y} \theta^{-s(x,x')} \left( |e^{p(y)} - e^{p(y')}| 
 (\tau^* \circ f^{\varphi(a)-j}(a) )^{-(1+\eps)} \|v_\Delta\|_{\Delta,\infty}^* \right. \\
&& \quad + \left. \mu_0(a)  (\tau^* \circ f^{ \varphi(a)-j }(a) )^{-(1+\eps)} 
|v_\Delta |_\theta^* \right)
\\
&\le& C_1 \mu_0(a) (\tau^* \circ f^{ \varphi(a)-j }(a) )^{-(1+\eps)} \|v_\Delta\|_{\Delta,\theta}^*.
\end{eqnarray*}

Thus,
\begin{align*}
\sum_{j \geq n} & \| B_jv_\Delta \|_\theta 
\leq \sum_{\stackrel{a \in \alpha}{\varphi(a) > n}} \sum_{j=1}^{\varphi(a)-1} \| B_{j,a}v_\Delta \|_\theta  
\leq \sum_{\stackrel{a \in \alpha}{\varphi(a) > n}} 
\sum_{k=0}^{\rho(a)-1} \sum_{j=\tau_k(a)+1}^{\tau_{k+1}(a)} \| B_{j,a}v_\Delta \|_\theta  \\
&\leq \sum_{\stackrel{a \in \alpha}{\varphi(a) > n}} 
C_1 \mu_0(a) \sum_{k=0}^{\rho(a)-1} \sum_{j=\tau_k(a)+1}^{\tau_{k+1}(a)} 
(1+\tau_{k+1}(a)-j)^{-(1+\eps)}  \| v_\Delta \|^*_{\Delta,\theta} \\
&\leq  C_1 C_\eps\| v_\Delta \|^*_{\Delta,\theta}\sum_{\stackrel{a \in \alpha}{\varphi(a) > n}} 
 \mu_0(a) \rho(a) = C_1 C_\eps \| v_\Delta \|^*_{\Delta,\theta}
\int_{\{\varphi > n\}} \rho(y) \, d\mu_0 \\
&\leq C_1 C_\eps C\, \| v_\Delta \|^*_{\Delta,\theta} \, \mu_0(\varphi > n),
\end{align*}
where the last inequality was obtained using  (H1).
\end{proof}

\begin{proof}[Proof of Lemma~\ref{lemma-of1}]
By direct computation:
\begin{eqnarray*}
\int_\Delta A(1) 1_{Y} w_\Delta d\mu_\Delta &=&
\int_\Delta \sum_{n \geq 0}^\infty  
L_\Delta^n(1_{\{ \varphi > n\}}) w_\Delta d\mu_\Delta 
=\int_\Delta \sum_{n \geq 0}^\infty  
1_{\{ \varphi > n\}} w_\Delta \circ T_\Delta^n\, d\mu_\Delta \\
&=&\int_{Y} \sum_{n \geq 0}^{\varphi-1} w_\Delta \circ T_\Delta^n d\mu_0
= \int_{\Delta} w_\Delta d\mu_\Delta.
\end{eqnarray*}
The statement on $A$ follows. For the statement on $B$, let $v_k = 1_{\{\varphi_\Delta = k\}}v_\Delta$ for $k  \geq 0$ and set $v_{k,j} = 1_{Y_j}v_k$
where $Y_j$ is the $j$-th level of the tower.
Then $v_{k,j}$ have disjoint supports, and  for each element $a \in \alpha$, there is only one $j$ such that $v_{k,j}$
is supported on $f_\Delta^j(a)\subset Y_j$, namely, $k+j = \varphi(a)$.
Let $u_k(y,0) = v_k(y,j)$ and compute that
\begin{equation}\label{eq-Bn}
L_\Delta^kv_k = \sum_{j=1}^\infty L_\Delta^k v_{k,j} = \sum_{j = 1}^\infty L_\Delta^{k} L_\Delta^{j} u_k
= L_\Delta^\varphi u_k=R^*u_k,
\end{equation}
Recall that $B(1)v_\Delta$ is supported on $Y$. Hence,
\begin{align*}
\int_Y B(1) v_\Delta \, d\mu_0 &= \int_{Y} \sum_{k=0}^\infty L_\Delta^k v_k \, d\mu_0
= \int_{Y} \sum_{k=0}^\infty R^* u_k  \, d\mu_0
 = \int_{Y} \sum_{k=1}^\infty u_k  \, d\mu_0 =
\int_\Delta v_\Delta \, d\mu_\Delta.
\end{align*}
The conclusion follows since $\int_\Delta v_\Delta \, d\mu_\Delta=\int_X v_X \, d\mu_X$.
\end{proof}

\section{Proofs of  Lemmas used in the proof of Theorem~\ref{prop-corel-finite} (finite measure case)}
\label{sec-proofs_finite}

\subsection{Estimating the coefficients of $V(z)(v_\Delta, w_\Delta)$ defined in~\eqref{eq-Vz} }

\begin{lemma}\label{lemma-first2main}Assume the setting of  Lemma~\ref{lemma-Gouezel}.
Then the coefficients $V_n(v_\Delta, w_\Delta)$ of the function $V(z)(v_\Delta, w_\Delta)$, $z\in\D$ are given by
\[
V_n(v_\Delta, w_\Delta)=\frac{1}{\bar\varphi^2}\sum_{k=n+1}^\infty \mu_0(\varphi>k)  \int_X v_X\, d\mu_X \int_X w_X  \, d\mu_X.
\]
\end{lemma}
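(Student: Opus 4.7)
The plan is to read off the $n$-th coefficient of $V(z)(v_\Delta,w_\Delta)$ directly from its definition and then simplify the double sum by evaluating $PR_jP$ explicitly. From \eqref{eq-Vz} the coefficient is
\[
V_n(v_\Delta,w_\Delta)=\frac{1}{\bar\varphi^{\,2}}\int_\Delta A(1)\Bigl(\sum_{k=n+1}^\infty\sum_{j=k+1}^\infty PR_jP\Bigr)B(1)\,v_\Delta w_\Delta\,d\mu_\Delta,
\]
so the task reduces to computing $PR_jP$ as an operator and then using Lemma~\ref{lemma-of1} to evaluate the remaining integral.

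First, I would compute $PR_jP$ (where $R_j=R_j^*$ in the tower notation of Section~\ref{sec-gentower}). Since $P$ is the rank-one projection $Pu=\bigl(\int_Y u\,d\mu_0\bigr)\mathbf{1}_Y$, the composition $PR_j^*P$ applied to any $u\in\B_\theta(Y)$ yields $\bigl(\int_Y u\,d\mu_0\bigr)\bigl(\int_Y R_j^*\mathbf{1}_Y\,d\mu_0\bigr)\mathbf{1}_Y$. By the defining formula $R_j^*\mathbf{1}_Y=\mathbf{1}_YL_\Delta^{j}(\mathbf{1}_{\{\varphi=j\}})$ and duality with respect to $\mu_\Delta$,
\[
\int_Y R_j^*\mathbf{1}_Y\,d\mu_0=\int_\Delta \mathbf{1}_{\{\varphi=j\}}\,\mathbf{1}_Y\!\circ T_\Delta^{j}\,d\mu_\Delta=\mu_0(\varphi=j),
\]
since points in $\{\varphi=j\}$ return to $Y$ after exactly $j$ steps. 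Hence $PR_j^*P=\mu_0(\varphi=j)\,P$, and summing twice gives
\[
\sum_{k=n+1}^\infty\sum_{j=k+1}^\infty PR_j^*P=\Bigl(\sum_{k=n+1}^\infty\mu_0(\varphi>k)\Bigr)P.
\]

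Next, I would evaluate the scalar factor $\int_\Delta A(1)PB(1)\,v_\Delta w_\Delta\,d\mu_\Delta$. Since $B(1)v_\Delta$ is supported on $Y$, Lemma~\ref{lemma-of1} gives $\int_Y B(1)v_\Delta\,d\mu_0=\int_X v_X\,d\mu_X$, so
\[
PB(1)v_\Delta=\Bigl(\int_X v_X\,d\mu_X\Bigr)\mathbf{1}_Y.
\]
Applying $A(1)$, pairing with $w_\Delta$, and using the other half of Lemma~\ref{lemma-of1}, which identifies $\int_\Delta A(1)\mathbf{1}_Yw_\Delta\,d\mu_\Delta$ with $\int_X w_X\,d\mu_X$, yields
\[
\int_\Delta A(1)PB(1)\,v_\Delta w_\Delta\,d\mu_\Delta=\int_X v_X\,d\mu_X\int_X w_X\,d\mu_X.
\]
Substituting back produces the claimed formula.

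There is no real obstacle here: the argument is a bookkeeping exercise, provided one keeps track of two things, namely that the unstarred $R_j$ in Lemma~\ref{lemma-Gouezel} refers to the tower operator $R_j^*$ acting on $\B_\theta(Y)$ and that all sums and integrals interchange freely (which is justified because $P$ has rank one, $\sum_j\mu_0(\varphi=j)=1$, and Lemma~\ref{lemma-Bn2} provides a convergent majorant for $B(1)v_\Delta$). The only mildly delicate point is checking $\int_Y R_j^*\mathbf{1}_Y\,d\mu_0=\mu_0(\varphi=j)$, and this is immediate from duality on the tower.
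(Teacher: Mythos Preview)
Your proof is correct and follows essentially the same route as the paper's: both evaluate $PR_j^*P$ as the scalar $\mu_0(\varphi=j)$ times $P$ (the paper does this implicitly after first applying $P$ to $B(1)v_\Delta$), sum in $j$ and $k$ to produce $\sum_{k=n+1}^\infty\mu_0(\varphi>k)$, and then invoke both halves of Lemma~\ref{lemma-of1} to identify $\int_\Delta A(1)PB(1)v_\Delta w_\Delta\,d\mu_\Delta$ with $\int_X v_X\,d\mu_X\int_X w_X\,d\mu_X$. Your write-up is in fact slightly more explicit about the duality step $\int_Y R_j^*\mathbf{1}_Y\,d\mu_0=\mu_0(\varphi=j)$ than the paper's.
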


\begin{proof}
By Lemma~\ref{lemma-of1}, $\int_Y B(1) v_\Delta d\mu_0 =
 \int_X v_X\, d\mu_X $. Recalling $R_n^*v=1_{Y}L_\Delta^n(1_{\{\varphi=n\}}v)$,
we compute that
\[
\Big(\sum_{n=0}^\infty( \frac{1}{\bar\varphi^2}\sum_{k=n+1}^\infty \sum_{j=k+1}^\infty P R_j^* P) z^n\Big)B(1) v_\Delta
=\frac{1}{\bar\varphi^2}\int_X v_X\, d\mu_X\sum_{n=0}^\infty \sum_{k=n+1}^\infty\mu_0(\varphi>k) z^n.
\]
Thus, using Lemma~\ref{lemma-of1} (first, for the statement on $B$ and at the end of the argument for $A$),
\begin{align*}\label{eq-secmain}
\nonumber V(z)(v_\Delta, w_\Delta)&=\int_\Delta A(1)\Big(\sum_{n=0}^\infty(\frac{1}{\bar\varphi^2}\sum_{k=n+1}^\infty \sum_{j=k+1}^\infty P R_j^* P) z^n\Big) B(1) v_\Delta w_\Delta  \, d\mu_\Delta\\
\nonumber &=\frac{1}{\bar\varphi^2}\int_X v_X\, d\mu_X\sum_{n=0}^\infty \sum_{k=n+1}^\infty\mu_0(\varphi>k) z^n\int_\Delta A(1) 1_Y w_\Delta  \, d\mu_\Delta\\
&=\frac{1}{\bar\varphi^2}\int_X v_X\, d\mu_X\, \int_X w_X d\mu_X\, \sum_{n=0}^\infty \sum_{k=n+1}^\infty\mu_0(\varphi>k) z^n.
\end{align*}
The conclusion follows.~\end{proof}

\subsection{Estimating the coefficients of $I^{A}(z)(v_\Delta, w_\Delta)$ and $I^{B}(z)(v_\Delta, w_\Delta)$ defined in~\eqref{eq-decomp_main} }

We begin with some immediate consequences of Lemmas~\ref{lemma-An} and~\ref{lemma-Bn2}.
\begin{lemma}\label{lemma-Az-A1}
Assume (H0) a) and (H1). Let $w_X:X\to\R$ such that $\|w_X\|_{\infty}^*<\infty$. Then
\[
\int_\Delta \frac{A(z)-A(1)}{1-z} 1_Y  w_\Delta \, d\mu_\Delta=\sum_{n\geq 0} a_n z^n,
\]
where $|a_n|=O(\mu_0(\varphi>n)  \| w_X \|_{\infty}^*)$.
\end{lemma}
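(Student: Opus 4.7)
The plan is to recognize $\frac{A(z)-A(1)}{1-z}$ as a power series whose $n$-th Taylor coefficient, after pairing with $1_Y w_\Delta$ and integrating against $\mu_\Delta$, is precisely the tail sum $-\int_\Delta \sum_{j>n} A_j 1_Y w_\Delta\,d\mu_\Delta$, and then to apply Lemma~\ref{lemma-An} directly with $v_Y = 1_Y$.

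First, I would use the elementary identity $\frac{z^j-1}{z-1} = 1 + z + \cdots + z^{j-1}$ together with $A(z) = \sum_{j \geq 0} A_j z^j$ to write, for $z \in \D$,
\[
\frac{A(z) - A(1)}{1-z} \;=\; -\sum_{j \geq 1} A_j \sum_{k=0}^{j-1} z^k \;=\; -\sum_{n \geq 0} z^n \sum_{j > n} A_j.
\]
Pairing with $1_Y w_\Delta$ and integrating against $\mu_\Delta$, the $n$-th coefficient reads
\[
a_n \;=\; -\int_\Delta \sum_{j > n} A_j 1_Y \cdot w_\Delta \, d\mu_\Delta.
\]

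Second, I would invoke Lemma~\ref{lemma-An} with the choice $v_Y = 1_Y$, so that $\|v_Y\|_{L^\infty(\mu_0)} = 1$. The lemma immediately yields
\[
|a_n| \;\leq\; C\,\mu_0(\varphi > n+1)\,\|w_X\|_\infty^* \;=\; O\bigl(\mu_0(\varphi > n)\,\|w_X\|_\infty^*\bigr),
\]
which is the asserted estimate.

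The only delicate point is the interchange of summation and integration leading to the tail-sum form of $a_n$. This is justified because under (H0) a) we have $\mu_0(\varphi > n) = O(n^{-\beta})$ with $\beta > 1$, so $\sum_{j} \mu_0(\varphi > j) < \infty$; combined with Lemma~\ref{lemma-An} applied at $n = 0$, which shows $\int_\Delta \sum_{j \geq 0} A_j 1_Y w_\Delta\,d\mu_\Delta$ is absolutely convergent and bounded by $C\|w_X\|_\infty^*$, Fubini legitimises all rearrangements for $z \in \D$. There is no genuine obstacle here; the lemma is essentially a bookkeeping repackaging of Lemma~\ref{lemma-An} into generating-function form, so that it can be fed termwise into the analysis of $I^A(z)(v_\Delta, w_\Delta)$ in~\eqref{eq-decomp_main}.
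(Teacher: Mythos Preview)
Your proof is correct and follows essentially the same approach as the paper: rewrite $-\frac{A(z)-A(1)}{1-z}$ as $\sum_{n\geq 0} z^n \sum_{j>n} A_j$, read off the coefficients, and apply Lemma~\ref{lemma-An} with $v_Y = 1_Y$. The only difference is that you spell out the Fubini justification and the off-by-one index shift, while the paper leaves these implicit.
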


\begin{proof}
Compute that
\begin{align*}
-\int_\Delta & \frac{A(z)-A(1)}{1-z}  1_Y  w_\Delta \, d\mu_\Delta
=\int_\Delta \sum_{n\geq 0} \sum_{j\geq n} A_j 1_Y  w_\Delta z^n \, d\mu_\Delta.
\end{align*}
The conclusion follows from the above equation together with Lemma~\ref{lemma-An}.~\end{proof}
\begin{lemma}\label{lemma-Vz-V1}
Assume (H0) a) and (H1). Let $v_X:X\to\R$ such that $\|v_X\|_{\theta}^*<\infty$. Then
\[
\int_\Delta \frac{B(z)-B(1)}{1-z} v_\Delta \, d\mu_\Delta=\sum_{n\geq 0} a_n z^n,
\]
where $|a_n|=O(\mu(\varphi>n)  \|v_X\|_{\theta}^*)$.
\end{lemma}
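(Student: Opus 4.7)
The plan parallels the proof of Lemma~\ref{lemma-Az-A1}. First I would carry out the algebraic expansion
\[
\frac{B(z)-B(1)}{1-z} \;=\; -\sum_{n\geq 0} B_n\,\frac{1-z^n}{1-z} \;=\; -\sum_{n\geq 0} B_n\,(1+z+\cdots+z^{n-1}),
\]
from which the coefficient of $z^m$ is $-\sum_{n\geq m+1} B_n$. Applying this to $v_\Delta$ and integrating against $\mu_\Delta$ gives
\[
a_m \;=\; -\int_\Delta \Big(\sum_{n\geq m+1} B_n v_\Delta\Big)\, d\mu_\Delta.
\]

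Next, I would invoke the fact that by definition $B_n v_\Delta$ is supported on the base $Y$ of the tower. Since $\mu_\Delta|_Y = \mu_0$ and $\mu_0(Y)=1$, the integral against $\mu_\Delta$ is bounded by an $L^1(\mu_0)$-norm on $Y$, which in turn is bounded by the $L^\infty(\mu_0)$-norm, and hence by the $\|\cdot\|_\theta$ norm (via (P1) a)). Thus
\[
|a_m| \;\leq\; \Big\|\sum_{n\geq m+1} B_n v_\Delta\Big\|_\theta \;\leq\; \sum_{n\geq m+1} \|B_n v_\Delta\|_\theta.
\]

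At this point the bound is handed to Lemma~\ref{lemma-Bn2}, which yields
\[
\sum_{n\geq m+1} \|B_n v_\Delta\|_\theta \;\leq\; C\, \mu_0(\varphi > m)\, \|v_X\|_\theta^*,
\]
giving $|a_m| = O(\mu_0(\varphi > m)\,\|v_X\|_\theta^*)$. Combining with (H0) a) (used if an explicit polynomial rate is wanted) completes the proof.

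There is essentially no obstacle: all the substantive work (the Gibbs--Markov distortion estimate, the weighted-norm bookkeeping with $\tau^*$, and the summation over levels of the tower controlled via (H1)) has already been done inside Lemma~\ref{lemma-Bn2}. This lemma is merely the coefficient-level reformulation of that estimate, directly analogous to how Lemma~\ref{lemma-Az-A1} packages Lemma~\ref{lemma-An}. The only points requiring care are the algebraic expansion in the first step and the observation that $B_n v_\Delta$ lives on $Y$, so the $L^1(\mu_\Delta)$-to-$\|\cdot\|_\theta$ reduction is immediate.
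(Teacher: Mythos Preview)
Your proposal is correct and follows precisely the route the paper intends: the paper's own proof simply says to repeat the argument of Lemma~\ref{lemma-Az-A1} with Lemma~\ref{lemma-Bn2} in place of Lemma~\ref{lemma-An}, and that is exactly what you do (including the extra observation, not needed in the $A$-case, that $B_n v_\Delta$ is supported on $Y$ so the $\mu_\Delta$-integral is controlled by $\|\cdot\|_\theta$).
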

\begin{proof}The conclusion follows by the argument used in the proof of Lemma~\ref{lemma-Az-A1}, using Lemma~\ref{lemma-Bn2}
instead of Lemma~\ref{lemma-An}.
\end{proof}

The coefficients of  $I^A(z)$ will be obtained by decomposing this  term into
$I^A(z)=D_A(z)+F_A(z)$, where
\[
D_A(z)(v_\Delta, w_\Delta):=\int_\Delta \frac{A(z)-A(1)}{1-z} \frac{P}{\bar\varphi}B(1) v_\Delta w_\Delta \, d\mu_\Delta
\]
and
\[
F_A(z)(v_\Delta, w_\Delta):=\int_\Delta \frac{A(z)-A(1)}{1-z}  Q(z)B(1)v_\Delta w_\Delta \, d\mu_\Delta,
\]
with
\begin{equation}\label{eq-Q}
Q(z) v:=(1-z)\sum_{n=0}^\infty( \frac{1}{\bar\varphi^2}\sum_{k=n+1}^\infty \sum_{j=k+1}^\infty P R_j^* P) z^n\Big)v.
\end{equation}

\begin{lemma}\label{lemma-thirdmain}Assume the setting of Lemma~\ref{lemma-Gouezel}. Assume that (H1) holds.
Suppose that $v_X, w_X:X\to\R$ are such that$\| v_X \|_{L^\infty(\mu_X)}<\infty$ and $\|w_X\|_{\infty}^*<\infty$. 
Then the coefficients $D_{A,n}(v_\Delta, w_\Delta)$, $F_{A,n}(v_\Delta, w_\Delta)$ of the functions $D_A(z)(v_\Delta, w_\Delta)$ and $F_A(z)(v_\Delta, w_\Delta)$, $z\in\D$
 satisfy
\[
\begin{cases}
|D_{A,n}(v_\Delta, w_\Delta)|= O(\mu_0(\varphi>n)   \| v_X \|_{L^\infty(\mu_X)} \| w_X \|_{\infty}^*),\\[1mm] 
\, |F_{A,n}(v_\Delta, w_\Delta)|= O(\mu_0(\varphi>n)   \| v_X \|_{L^\infty(\mu_X)}  \| w_X \|_{\infty}^*).
\end{cases}
\]
~\end{lemma}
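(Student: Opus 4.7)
The key simplification is that by Lemma~\ref{lemma-of1}, $PB(1)v_\Delta$ is a scalar multiple of $1_Y$: explicitly $PB(1)v_\Delta = \bigl(\int_X v_X\,d\mu_X\bigr)\cdot 1_Y$. In the finite-measure setting $|\int_X v_X\,d\mu_X| \le \mu_X(X)\|v_X\|_{L^\infty(\mu_X)}$, so this scalar factor is harmless and it absorbs the entire $v_\Delta$ dependence in both $D_A(z)$ and $F_A(z)$. Each functional then reduces to a product of a scalar power series with the series $\int_\Delta \tfrac{A(z)-A(1)}{1-z}\,1_Y w_\Delta\,d\mu_\Delta$, whose $n$-th coefficient is $O(\mu_0(\varphi>n)\|w_X\|_\infty^*)$ by Lemma~\ref{lemma-Az-A1}.

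For $D_{A,n}$ I would simply factor out the scalar,
\[
D_A(z)(v_\Delta,w_\Delta) \,=\, \frac{1}{\bar\varphi}\int_X v_X\,d\mu_X\;\cdot\int_\Delta \frac{A(z)-A(1)}{1-z}\,1_Y w_\Delta\,d\mu_\Delta,
\]
and read off the bound directly from Lemma~\ref{lemma-Az-A1} combined with the estimate $|\int v_X\,d\mu_X|\le \mu_X(X)\|v_X\|_{L^\infty(\mu_X)}$ above.

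For $F_{A,n}$ the real task is to evaluate $Q(z)B(1)v_\Delta$ explicitly. Using $PB(1)v_\Delta = \bigl(\int v_X\,d\mu_X\bigr)1_Y$, the definition $R_j^* v = R^*(1_{\{\varphi=j\}}v)$, and duality $\int_Y R_j^* 1_Y\,d\mu_0 = \mu_0(\varphi=j)$, one obtains $PR_j^*PB(1)v_\Delta = \bigl(\int v_X\,d\mu_X\bigr)\mu_0(\varphi=j)\cdot 1_Y$, and hence
\[
Q(z)B(1)v_\Delta \,=\, \frac{1}{\bar\varphi^2}\int_X v_X\,d\mu_X\cdot g(z)\cdot 1_Y,\qquad g(z) := (1-z)\sum_{n\ge 0}\Bigl(\sum_{k\ge n+1}\mu_0(\varphi>k)\Bigr)z^n.
\]
A telescoping computation gives $g_n = -\mu_0(\varphi>n)$ for $n\ge 1$ and $|g_0|\le \bar\varphi$; in particular $|g_n| = O(\mu_0(\varphi>n))$ for every $n$. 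So $F_A(z)$ is the product of two scalar-valued power series whose $k$-th coefficients are each $O(\mu_0(\varphi>k)) = O(k^{-\beta})$ under (H0)~a) and Lemma~\ref{lemma-Az-A1}, and its $n$-th coefficient is the corresponding scalar convolution.

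The final ingredient is the classical tail-convolution fact that, for $\beta>1$, the convolution of two sequences of order $k^{-\beta}$ is again $O(n^{-\beta})$: split the convolution at $n/2$ and use summability of $\mu_0(\varphi>k)$ on the dominant half. This yields $|F_{A,n}| = O(\mu_0(\varphi>n)\|v_X\|_{L^\infty(\mu_X)}\|w_X\|_\infty^*)$, as claimed. The only slightly delicate point, not a genuine obstacle, is the explicit identification of $Q(z)B(1)v_\Delta$ as $(\text{scalar in }z)\times 1_Y$, which is what reduces the problem to a scalar convolution of tail sequences rather than an operator-valued one; the remaining steps are routine bookkeeping via Lemmas~\ref{lemma-of1} and~\ref{lemma-Az-A1}.
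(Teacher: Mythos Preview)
Your argument is correct and follows essentially the same route as the paper: factor out the scalar $\int_X v_X\,d\mu_X$ via Lemma~\ref{lemma-of1}, reduce $Q(z)B(1)v_\Delta$ to a scalar power series with coefficients $O(\mu_0(\varphi>n))$, and then bound the convolution with the $A$-series from Lemma~\ref{lemma-Az-A1}. You are slightly more explicit than the paper in two places: you identify the sign $g_n=-\mu_0(\varphi>n)$ for $n\ge 1$ (the paper writes the simplified series as $\sum_{n\ge0}\mu_0(\varphi>n)z^n$, which is off by a sign and at $n=0$, though harmless for the bound), and you spell out the $\beta>1$ convolution argument that the paper leaves implicit in ``The statement follows.''
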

\begin{proof} By Lemma~\ref{lemma-of1} (the statement on $B$),
\[
D_A(z)(v_\Delta, w_\Delta)=\frac{1}{\bar\varphi}\int_\Delta v_\Delta \, d\mu_\Delta\int_\Delta \frac{A(z)-A(1)}{1-z} 1_Y w_\Delta \, d\mu_\Delta.
\]
By Lemma~\ref{lemma-Az-A1},  $\int_\Delta \frac{A(z)-A(1)}{1-z} 1_Y w_\Delta \, d\mu_\Delta=\sum_{n\geq 0} a_n z^n$
with $|a_n|=O(\mu_0(\varphi>n)  \| w_X \|_{\infty}^*)$. The statement on $|D_{A,n}(v_\Delta, w_\Delta)|$ follows.

Next, by definition,
\begin{align*}
Q(z)B(1) v_\Delta &= \frac{1}{\bar\varphi^2}\int_\Delta B(1) v_\Delta \, d\mu_\Delta\times (1-z) \sum_{n=0}^\infty \sum_{k=n+1}^\infty\mu_0(\varphi>k) z^n\\
&=\frac{1}{\bar\varphi^2}\int_\Delta v_\Delta \, d\mu_\Delta\times \sum_{n=0}^\infty\mu_0(\varphi>n) z^n.
\end{align*}
Thus,
\[
F_A(z)(v_\Delta, w_\Delta)=\frac{1}{\bar\varphi^2}\int_\Delta v_\Delta \, d\mu_\Delta\times \sum_{n=0}^\infty\mu_0(\varphi>n) z^n\times
\int_\Delta \frac{A(z)-A(1)}{1-z}  1_Y w_\Delta \, d\mu_\Delta.
\]
We already know that  the coefficients of $\int_\Delta \frac{A(z)-A(1)}{1-z} 1_Y w_\Delta \, d\mu_\Delta$
are $O(\mu_0(\varphi>n)  \| w_X \|_{\infty}^*)$. The statement on $ |F_{A,n}(v_\Delta, w_\Delta)|$ follows.
\end{proof}

The next result provides estimates for the coefficients of  $I^{B}(z)(v_\Delta, w_\Delta)$, $z\in\D$ defined in~\eqref{eq-decomp_main}. 
Write $I^B(z)=D_B(z)+F_B(z)$, where given that $Q(z)$ is as defined in~\eqref{eq-Q},
\[
D_B(z):=\int_\Delta A(1)\frac{P}{\bar\varphi}\frac{B(z)-B(1)}{1-z}  v_\Delta w_\Delta  \, d\mu_\Delta
\]
and
\[
F_B(z):=\int_\Delta A(1)Q(z)\frac{B(z)-B(1)}{1-z}  v_\Delta w_\Delta  \, d\mu_\Delta.
\]
\begin{lemma}\label{lemma-lastmain}Assume the setting of  Lemma~\ref{lemma-thirdmain}. 
Then the coefficients $D_{B,n}(v_\Delta, w_\Delta)$, $F_{B,n}(v_\Delta, w_\Delta)$ of the functions $D_B(z)(v_\Delta, w_\Delta)$, $F_B(z)(v_\Delta, w_\Delta)$, $z\in\D$
satisfy
\[
\begin{cases}
|D_{B,n}(v_\Delta, w_\Delta)|= O(\mu_0(\varphi>n)   \| v_X  \|_\theta^* \|w_X \|_{\infty}^*),\\[1mm] 
\, |F_{B,n}(v_\Delta, w_\Delta)|= O(\mu_0(\varphi>n)   \| v_X \|_{\theta}^* \| w_X \|_{\infty}^*).
\end{cases}
\]
~\end{lemma}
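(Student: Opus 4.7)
The proof will run in close parallel to Lemma~\ref{lemma-thirdmain}, with the roles of $A$ and $B$ interchanged. The key preparatory observation is that each $B_n v_\Delta$ is supported on the base $Y$, so $\frac{B(z)-B(1)}{1-z} v_\Delta$ is a ($z$-dependent) function supported on $Y$. Consequently $\int_\Delta \frac{B(z)-B(1)}{1-z}v_\Delta\, d\mu_\Delta = \int_Y \frac{B(z)-B(1)}{1-z}v_\Delta\, d\mu_0$, and Lemma~\ref{lemma-Vz-V1} tells us that the $n$-th coefficient of this scalar generating function is $O(\mu_0(\varphi>n)\,\|v_X\|_\theta^*)$.

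For $D_B(z)$ I will use the rank-one formula $Pu = \bigl(\int_Y u\, d\mu_0\bigr) \cdot 1_Y$, which gives
$$
\frac{P}{\bar\varphi}\,\frac{B(z)-B(1)}{1-z} v_\Delta \;=\; \frac{1}{\bar\varphi}\Big(\int_Y \frac{B(z)-B(1)}{1-z}v_\Delta\, d\mu_0\Big)\,1_Y.
$$
Pulling the resulting scalar out of the integral and invoking Lemma~\ref{lemma-of1} for $\int_\Delta A(1) 1_Y w_\Delta\, d\mu_\Delta = \int_X w_X\, d\mu_X$ reduces $D_B(z)(v_\Delta,w_\Delta)$ to the product $\frac{1}{\bar\varphi}\int_X w_X\, d\mu_X\cdot\int_Y \frac{B(z)-B(1)}{1-z}v_\Delta\, d\mu_0$. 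The coefficient bound from Lemma~\ref{lemma-Vz-V1}, together with $|\int_X w_X\, d\mu_X| \leq C\|w_X\|_\infty^*$ (finite $\mu_X$ combined with the weight $(\tau^*)^{-(1+\eps)}$), delivers the required estimate for $|D_{B,n}|$.

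For $F_B(z)$ the same rank-one trick applied to $Q(z)$ (which depends on its argument only through $P$) yields
$$
Q(z)\,\frac{B(z)-B(1)}{1-z} v_\Delta \;=\; \frac{1}{\bar\varphi^2}\Big(\int_Y \frac{B(z)-B(1)}{1-z}v_\Delta\, d\mu_0\Big)\cdot(1-z)\sum_{n\geq 0} s_n z^n \cdot 1_Y,
$$
with $s_n := \sum_{k>n}\mu_0(\varphi>k)$. A telescoping computation (as in the analogous step of Lemma~\ref{lemma-thirdmain}) shows that $(1-z)\sum_n s_n z^n$ has $n$-th coefficient of order $O(\mu_0(\varphi>n))$. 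Combining these, $F_B(z)(v_\Delta,w_\Delta)$ factors as a scalar $\frac{1}{\bar\varphi^2}\int_X w_X\, d\mu_X$ times the product of two power series in $z$, whose coefficients are of order $O(\mu_0(\varphi>n)\|v_X\|_\theta^*)$ and $O(\mu_0(\varphi>n))$ respectively.

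The only computational point that is not purely algebraic is the coefficient bound for this product. Under (H0) a), $\mu_0(\varphi>n)\ll n^{-\beta}$ with $\beta>1$, so both sequences are summable with the same tail rate. The standard convolution estimate (split the sum at $n/2$ and use summability of each factor) shows that the convolution is again $O(n^{-\beta}) = O(\mu_0(\varphi>n))$. Combined with the $O(\|w_X\|_\infty^*)$ bound on $\int_X w_X\, d\mu_X$, this produces the claimed bound for $|F_{B,n}|$. I expect this convolution step to be the main, though entirely routine, technical point; the rest is bookkeeping to expose the right scalar generating functions so that Lemma~\ref{lemma-Vz-V1} can be applied.
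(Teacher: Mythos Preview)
Your proposal is correct and follows essentially the same approach as the paper, which simply states that the argument mirrors that of Lemma~\ref{lemma-thirdmain} with Lemma~\ref{lemma-Vz-V1} in place of Lemma~\ref{lemma-Az-A1}. You have spelled out precisely what that substitution entails, including the rank-one reduction via $P$, the use of Lemma~\ref{lemma-of1} for $A(1)$, and the convolution estimate for $F_{B,n}$ (the latter being implicit in the paper's treatment of $F_{A,n}$ as well).
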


\begin{proof}The required argument is similar to the one used in the proof of  Lemma~\ref{lemma-thirdmain}
with Lemma~\ref{lemma-Vz-V1} replacing  Lemma~\ref{lemma-Az-A1}.~\end{proof}

\section{Proofs of Lemmas used in the proof of Theorem~\ref{prop-corel-infinite} (infinite measure case)} 
\label{sec-inf-estimates}

Recall that for $z\in\D$, the functions $I_{\inf}^A(z)(v_\Delta, w_\Delta):=\int_\Delta (A(z)-A(1)) T^*(z) B(1) v_\Delta w_\Delta  \, d\mu_\Delta$
and  $I_{\inf}^B(z)(v_\Delta, w_\Delta):=\int_\Delta A(1)T^*(z) (B(z)-B(1)) v_\Delta w_\Delta  \, d\mu_\Delta$
were defined in equation~\eqref{eq-decomp1}. Let 
 $I_{\inf,n}^{A}(v_\Delta, w_\Delta)$ and
$I_{\inf,n}^{B}(v_\Delta, w_\Delta)$ denote their $n$-th coefficients.
The next result provides estimates for $I_{\inf,n}^{A}(v_\Delta, w_\Delta)$ and 
$I_{\inf,n}^{B}(v_\Delta, w_\Delta)$ and it was used in the proof of Theorem~\ref{prop-corel-infinite}. 

\begin{lemma}~\label{lemma-IAn-IBn}
Assume (H0) b) and (H1). Let $v_X, w_X:X\to\R$ such that
$ \| v_X \|_\theta^*, \|w_X\|_{\infty}^*<\infty$. Then
\[
|I_{\inf,n}^{A}(v_\Delta, w_\Delta)|=O( n^{-\beta} \| v_X \|_\theta^* \|w_X\|_{\infty}^*), \quad |I_{\inf,n}^{B}(v_\Delta, w_\Delta)|=O(n^{-\beta}\| v_X \|_\theta^* \cdot\|w_X\|_{\infty}).
\]
\end{lemma}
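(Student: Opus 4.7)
My plan is to analyze both $I_{\inf}^{A}(z)$ and $I_{\inf}^{B}(z)$ coefficient-wise by first extracting the factor $(1-z)$ that vanishes at the boundary: write
$$
A(z)-A(1) = -(1-z)\sum_{k\ge 0}z^k\bar A_k, \qquad \bar A_k := \sum_{j>k}A_j,
$$
and analogously $B(z)-B(1) = -(1-z)\sum_{k\ge 0}z^k\bar B_k$. The payoff is that Lemma~\ref{lemma-An} yields $|\int_\Delta \bar A_k f\, w_\Delta\,d\mu_\Delta| \le C\mu_0(\varphi>k)\|f\|_{L^\infty(\mu_0)}\|w_X\|_\infty^* = O(k^{-\beta})$ for any $f\in\B_\theta(Y)$, and Lemma~\ref{lemma-Bn2} yields $\|\bar B_k v_\Delta\|_\theta = O(k^{-\beta}\|v_X\|_\theta^*)$. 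I then insert the decomposition $T_n^*=\pi(n)P+D_n$ from Lemma~\ref{lemma-MT}, where $\pi(n)=d_0 n^{\beta-1}+\cdots+d_{q-1}n^{q(\beta-1)}$ is scalar and $P$ is the rank-one projection $Pg=\int_Y g\,d\mu_0$, and treat the rank-one piece and the $D_n$ piece separately.

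For the $\pi(n)P$ contribution to $I_{\inf,n}^{A}$, the identity $PB(1)v_\Delta = \bigl(\int_X v_X\,d\mu_X\bigr)1_Y$ from Lemma~\ref{lemma-of1} reduces it to the scalar Cauchy product $-\bigl(\int v_X d\mu_X\bigr)\sum_{l+k=n}\pi'_l \beta_k$ where $\pi'_l = \pi(l)-\pi(l-1)=O(l^{\beta-2})$ (discrete derivative of the polylogarithmic polynomial) and $\beta_k := \int_\Delta \bar A_k 1_Y w_\Delta\,d\mu_\Delta = O(k^{-\beta}\|w_X\|_\infty^*)$. Splitting at $n/2$: the sum $\sum_l l^{\beta-2}$ converges (since $\beta<1$) so the $k>n/2$ half contributes $O(n^{-\beta})$, while the $k\le n/2$ half contributes $O(n^{\beta-2}\cdot n^{1-\beta})=O(n^{-1})$, both dominated by $O(n^{-\beta})$. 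The scalar factor $|\int v_X d\mu_X|\le C\|v_X\|_\infty^*$ follows from (H1) at $n=0$ (which gives $\int_Y \rho\,d\mu_0<\infty$) together with the $\tau^*$-weighted telescoping used in the proof of Lemma~\ref{lemma-Cn}. The parallel argument handles the corresponding piece of $I_{\inf,n}^{B}$ after rewriting $\int_\Delta A(1)g\,w_\Delta\,d\mu_\Delta = \int_Y g(y) W(y)\,d\mu_0$ with $W(y):=\sum_{i=0}^{\varphi(y)-1}w_X(f^iy)$ satisfying $|W(y)|\le C_\eps\|w_X\|_\infty^* \rho(y)$, whence $|\int_\Delta A(1)g\,w_\Delta\,d\mu_\Delta|\le C\|g\|_{L^\infty(\mu_0)}\|w_X\|_\infty^*$ by (H1).

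The main obstacle is the $D(z)$ piece, where the $(1-z)$-factorization produces convolutions of the form $\sum_{k=0}^{n-1}\bar A_k\bigl(D_{n-k-1}-D_{n-k}\bigr)B(1)v_\Delta$, and the naive triangle bound $\|D_m-D_{m-1}\|_\theta=O(m^{-(\beta-1/2)})$ only yields $O(n^{3/2-2\beta})$, which is insufficient when $\beta<3/4$. The remedy is the sharper difference estimate $\|D_m - D_{m-1}\|_\theta = O(m^{-(\beta+1/2)})$, obtained by revisiting the contour representation that produces $D_n$ in~\cite[Theorem 9.1]{MT} and~\cite[Theorem 1.1]{Terhesiu12}: inserting the extra factor $(1-z)$ in the integrand annihilates one order of the principal singularity of $(I-R^*(z))^{-1}$ at $z=1$. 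With this estimate in hand, the convolution $\sum_k k^{-\beta}(n-k)^{-(\beta+1/2)}$ splits into an $O(n^{-\beta})$ tail piece (where $\beta+1/2>1$ provides summability) and an $O(n^{1/2-2\beta})$ interior piece dominated by $O(n^{-\beta})$ since $\beta>1/2$. Boundary terms such as $\int_\Delta \bar A_n T_0^*B(1)v_\Delta w_\Delta\,d\mu_\Delta$ and $\int_\Delta \bar A_n D_nB(1)v_\Delta w_\Delta\,d\mu_\Delta$ are directly $O(n^{-\beta})$ and $O(n^{-(2\beta-1/2)})$ respectively by Lemma~\ref{lemma-An}; feeding everything back into $-(1-z)$ times the resulting series gives the claimed $O(n^{-\beta}\|v_X\|_\theta^*\|w_X\|_\infty^*)$ bound for both $I_{\inf,n}^{A}$ and $I_{\inf,n}^{B}$.
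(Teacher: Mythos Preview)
Your route is a genuine time-domain alternative to what the paper does. The paper never splits $T_n^*=\pi(n)P+D_n$ at this stage; instead it stays in the $z$-variable throughout. It shows (Corollary~\ref{cor-IaIBinf}) that the scalar functions $z\mapsto I_{\inf}^A(z)(v_\Delta,w_\Delta)$ and $z\mapsto I_{\inf}^B(z)(v_\Delta,w_\Delta)$ are uniformly bounded on $\bar\D$ and satisfy the increment estimate $|b(e^{-(u-i(\theta+h))})-b(e^{-(u-i\theta)})|\ll h^\beta|u-i\theta|^{-\beta}$. These follow by combining the bounds $\|T^*(z)\|_\theta\ll|1-z|^{-\beta}$ and $\|T^*(e^{-(u-i(\theta+h))})-T^*(e^{-(u-i\theta)})\|_\theta\ll h^\beta|u-i\theta|^{-2\beta}$, quoted directly from~\cite[Lemma~4.1]{MT} as Lemma~\ref{lemma-T*}, with the $h^\beta$-H\"older continuity of $A(z)-A(1)$ and $B(z)-B(1)$ obtained from Lemmas~\ref{lemma-An} and~\ref{lemma-Bn2} via the elementary Lemma~\ref{lemma-an}. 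A short Cauchy-integral argument (Lemma~\ref{lemma-b}) then converts these boundary estimates into the claimed $O(n^{-\beta})$ coefficient decay.

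The advantage of the paper's route is that it uses only off-the-shelf $z$-domain statements; no refinement of $D_n$ beyond $\|D_n\|_\theta=O(n^{-(\beta-1/2)})$ is needed. Your approach, by contrast, stands or falls on the estimate $\|D_m-D_{m-1}\|_\theta=O(m^{-(\beta+1/2)})$, and the one-line justification you give (``$(1-z)$ annihilates one order of the principal singularity'') is a heuristic, not a proof. The error term $D_n$ in~\cite[Theorem~9.1]{MT} assembles several contributions---the complementary spectral projection, the difference $P(z)-P$, and higher corrections in the expansion of $1/(1-\lambda(z))$---and showing that the discrete derivative of each gains a full factor $m^{-1}$ means re-running a nontrivial portion of that argument. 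As you yourself note, without this sharpening the convolution estimate only gives $O(n^{3/2-2\beta})$, which fails for $\beta\le 3/4$. So your strategy is viable in principle but incomplete as written, whereas the paper's frequency-domain argument bypasses the difficulty entirely.
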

The proof of the above result relies on  standard continuity properties  of the functions $I_{\inf}^A(z)$ and $I_{\inf}^B(z)$, $z\in\D$
which we recall below.

\subsection{Continuity properties of $I_{\inf}^A(z)$ and $I_{\inf}^B(z)$, $z\in\D$}

First we note some standard consequences  of Lemmas~\ref{lemma-An}, ~\ref{lemma-Bn2} and ~\ref{lemma-an}
which give the continuity properties of some quantities involving $A(z), B(z)$, $z\in\D$.

\begin{lemma}\label{lemma-an} Let $a(z)$ be a function acting on some function space $\B$ with norm $\|\,\|$, well defined on $\D$.
Suppose that its coefficients
satisfy  $\sum_{j>n}\|a_j\|\leq C_1 n^{-\beta}$ for $\beta>0$ and $C_1>0$.  
Then there exists $C_2>0$ such that for all $h>0$, all $u>0$ and all $\theta\in (-\pi,\pi]$,  $\|a(e^{-(u-i(\theta+h))}) - a(e^{-(u-i\theta)})\|\leq C_2 h^\beta$.~\end{lemma}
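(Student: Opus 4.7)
The plan is to expand $a(z)=\sum_{j\ge 0}a_j z^j$ as a power series and estimate $a(z_1)-a(z_2)$ term-by-term in $\B$, using the classical split at scale $N\sim 1/h$ to balance the two natural bounds on $|e^{ijh}-1|$.

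First, writing $z_1=e^{-(u-i(\theta+h))}$ and $z_2=e^{-(u-i\theta)}$, I would factor
\[
z_1^j-z_2^j=e^{-uj}e^{ij\theta}\bigl(e^{ijh}-1\bigr),
\]
so that the triangle inequality in $\B$ yields
\[
\|a(z_1)-a(z_2)\|\le\sum_{j\ge 1}\|a_j\|\,e^{-uj}\,|e^{ijh}-1|\le\sum_{j\ge 1}\|a_j\|\,|e^{ijh}-1|,
\]
using $e^{-uj}\le 1$ since $u>0$. The dependence on $u$ and $\theta$ is absorbed at this step, which is precisely why the final constant $C_2$ will be uniform in these variables.

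Next, restricting to $h\in (0,\pi]$ (for larger $h$ the claim is trivial, since the tail hypothesis at $n=1$ already yields $\sum_{j\ge 1}\|a_j\|<\infty$ and one just bounds the whole sum by $2\sum_j\|a_j\|$, which is $\ll h^\beta$ for bounded $h$), I would set $N=\lceil 1/h\rceil$ and split the sum at $j=N$. For $j>N$ I use $|e^{ijh}-1|\le 2$ and apply the hypothesis directly: $\sum_{j>N}\|a_j\|\le C_1 N^{-\beta}\le C_1 h^{\beta}$. For $j\le N$ I use $|e^{ijh}-1|\le jh$, reducing the head to $h\sum_{j=1}^N j\|a_j\|$. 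Setting $S_k:=\sum_{j>k}\|a_j\|$, so that $\|a_j\|=S_{j-1}-S_j$, a telescoping (Abel) summation gives
\[
\sum_{j=1}^N j\,\|a_j\|=S_0-NS_N+\sum_{k=1}^{N-1}S_k\le S_0+C_1\sum_{k=1}^{N-1}k^{-\beta}.
\]
For $\beta\in(0,1)$ the last partial sum is $O(N^{1-\beta})$, and multiplying by $h\asymp 1/N$ produces $O(N^{-\beta})=O(h^\beta)$, matching the tail contribution.

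The main obstacle is not technical but bookkeeping: verifying the Abel summation, fixing $N\asymp 1/h$ as the optimal balance point, and tracking that all constants depend only on $C_1$ and $\beta$. The boundary case $\beta=1$ produces an extra logarithmic factor in the head sum which must be absorbed by a slightly different split, but in the application of this lemma inside the proof of Theorem~\ref{prop-corel-infinite} we have $\beta\in(1/2,1)$, so this nuisance never appears.
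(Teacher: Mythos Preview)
Your proof is correct and follows essentially the same route as the paper: bound $|z_1^j-z_2^j|\le |e^{ijh}-1|$, split at $N\asymp 1/h$, use $|e^{ijh}-1|\le 2$ on the tail together with the hypothesis, and use $|e^{ijh}-1|\le jh$ on the head followed by Abel summation of $j\|a_j\|$ against the tail sums $S_k$. Your Abel identity $\sum_{j=1}^N j\|a_j\|=S_0-NS_N+\sum_{k=1}^{N-1}S_k$ is exactly the telescoping the paper performs (written there slightly less transparently), and your observation that the argument as stated only yields $O(h^\beta)$ for $\beta\in(0,1)$ is accurate and in fact sharper than the paper's statement---the lemma is applied only under (H0)~b), so this restriction is harmless.
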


\begin{proof}This  proof is standard. We provide it here only for completeness. Compute that
\begin{equation}\label{eq-a}
\|a(e^{-(u-i(\theta+h))})-a(e^{-(u-i\theta)})\|\leq h\sum_{j\leq h^{-1}}j\|a_j\|+\sum_{j> h^{-1}}\|a_j\|.
\end{equation}
By assumption, the second term is  bounded by $C_1 h^\beta$.
Next, let $s_n:=\sum_{j>n}\|a_j\|$ and note that 
\begin{align*}
\sum_{j\leq h^{-1}}j\|a_j\|=\sum_{j\leq h^{-1}}j(s_{j-1}-s_j)&=\sum_{j\leq h^{-1}}(j-1)s_{j-1}-\sum_{j\leq h^{-1}}js_{j}+\sum_{j\leq h^{-1}}s_{j-1}\\
&\leq C_1 (h^{-1}-1) h^\beta +C_1h^{\beta-1}\leq 2C_1 h^{\beta-1}.
\end{align*}
Hence, the first term of~\eqref{eq-a} is bounded by  $2C_1 h^\beta$, as required.~\end{proof}

\begin{lemma}\label{lemma-Az-A1_2}
Assume (H0) (either a) or b)) and (H1). Let $v_Y:Y\to\R$, $w_X:X\to\R$ such that $\|v_Y\|_{L^\infty(\mu_0)}$, $\|w_X\|_{\infty}^*<\infty$. 
Then there exist $C_1, C_2>0$ such that for all $h>0$, all $u>0$ and all $\theta\in (-\pi,\pi]$,  
\begin{align*}
& \Big|\int_\Delta (A(e^{-(u-i(\theta+h))})-A(e^{-(u-i\theta)}))v_Y  w_\Delta \, d\mu_\Delta\Big| 
 \leq C_1 h^\beta \|v_Y\|_{L^\infty(\mu_0)}\| w_X \|_{\infty}^*,
\\[1mm]
& \Big|\int_\Delta (A(e^{-(u-i\theta)})-A(1))v_Y  w_\Delta \, d\mu_\Delta\Big|
\leq C_2 |\theta|^\beta\|v_Y\|_{L^\infty(\mu_0)}\|w_X \|_{\infty}^*.
\end{align*}
\end{lemma}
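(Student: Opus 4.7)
The plan is to reduce both statements to H\"older continuity (in the boundary angular variable) of the scalar-valued power series
\[ a(z) := \int_\Delta A(z)\, v_Y\, w_\Delta\, d\mu_\Delta = \sum_{n\geq 0} a_n z^n, \qquad a_n := \int_\Delta A_n\, v_Y\, w_\Delta\, d\mu_\Delta, \]
and then to invoke Lemma~\ref{lemma-an} with the scalar space $\B=\R$ and $\|a_j\|=|a_j|$. The key technical input is an absolute-value refinement of Lemma~\ref{lemma-An}, namely $\sum_{j\geq n} |a_j| \ll \mu_0(\varphi>n)\, \|v_Y\|_{L^\infty(\mu_0)} \|w_X\|_\infty^*$. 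Since $a_j = \int_Y 1_{\{\varphi>j\}} v_Y\, w_\Delta \circ T_\Delta^j\, d\mu_0$, pulling the modulus inside the sum and integral and repeating the proof of Lemma~\ref{lemma-An} verbatim---that is, splitting the range $[n,\varphi(y))$ into blocks $[\tau_k(y),\tau_{k+1}(y))$, applying the telescoping bound \eqref{eq:w*} with $w_X$ in the role of $v_X$, and finally invoking (H1)---gives exactly this. Under either form of (H0) the right-hand side is $O(n^{-\beta})$, which is the hypothesis of Lemma~\ref{lemma-an}.

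The first inequality of Lemma~\ref{lemma-Az-A1_2} is then an immediate instance of Lemma~\ref{lemma-an} applied to $a(\cdot)$, with the constant $C_2$ there proportional to $\|v_Y\|_{L^\infty(\mu_0)} \|w_X\|_\infty^*$. For the second, specialize the reference point to $\theta=0$ and take the increment $h=|\theta|$ in Lemma~\ref{lemma-an}, giving $|a(e^{-(u-i\theta)})-a(e^{-u})| \ll |\theta|^\beta$. The replacement of $a(e^{-u})$ by $a(1)$ is controlled by bounding
\[ |a(e^{-u})-a(1)| = \Big|\sum_n a_n(e^{-un}-1)\Big| \]
via the same dyadic-type splitting at $n\sim u^{-1}$ as in the proof of Lemma~\ref{lemma-an} (using $|e^{-un}-1|\leq \min(1,un)$), which produces an $O(u^\beta)$ bound. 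Combining the two yields the advertised $O(|\theta|^\beta)$ estimate in the regime $0<u\leq|\theta|$, which is the range in which this lemma is invoked downstream---where $u$ plays the usual role of a vanishing regularization parameter in the Fourier inversion used to extract the $n$-th coefficient of $a$.

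The main obstacle is the first step: extracting from the proof of Lemma~\ref{lemma-An} the pointwise-in-$j$ statement $\sum_{j\geq n} |a_j| \ll \mu_0(\varphi>n)$ rather than the merely signed version $|\sum_{j\geq n} a_j| \ll \mu_0(\varphi>n)$. Once one observes that this is free---every bound in that proof is already a pointwise absolute-value estimate, the modulus having been introduced at the very first step---what remains is a bookkeeping variant of the already-established Lemma~\ref{lemma-an} and requires no new dynamical input.
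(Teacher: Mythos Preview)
Your approach is essentially the paper's: extract from the proof of Lemma~\ref{lemma-An} the absolute-value tail bound $\sum_{j\geq n}|a_j|\ll\mu_0(\varphi>n)\,\|v_Y\|_{L^\infty(\mu_0)}\|w_X\|_\infty^*$ (which, as you correctly note, is what that proof actually establishes), feed it into Lemma~\ref{lemma-an}, and read off the H\"older estimates. Your extra care with the second inequality---splitting off $|a(e^{-u})-a(1)|=O(u^\beta)$ and noting the restriction $u\le|\theta|$---is in fact a genuine refinement: the paper's stated bound $C_2|\theta|^\beta$ cannot literally hold for all $u>0$ (take $\theta=0$), and the correct uniform statement is $O(|u-i\theta|^\beta)$, which is what your argument yields and what is actually needed in Corollary~\ref{cor-IaIBinf}.
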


\begin{proof} By Lemma~\ref{lemma-An}, there exists $C>0$ such that
$\sum_{j=n}^\infty |\int_Y A_j  v_Y  w_\Delta\, d\mu_\Delta |\leq C \mu_0(\varphi>n)\|v_Y\|_{L^\infty(\mu_0)}
\| w_X \|_{\infty}^*$, for all $n\geq 0$. The conclusion follows by Lemma~\ref{lemma-an}.
\end{proof}

\begin{lemma}\label{lemma-Bz-B1_2}
Assume (H0) (either a) or b)) and (H1). Let $w_X:X\to\R$ such that  $\| v_X \|_\theta^* < \infty$. 
Then there exist $C_1, C_2>0$ such that  for all $h>0$, all $u\geq 0$ and all $\theta\in (-\pi,\pi]$, 
\begin{align*}
& 
\|(B(e^{-(u-i(\theta+h))})-B(e^{-(u-i\theta)}))v_\Delta\|_\theta\leq C_1 h^\beta\| v_X \|_\theta^*,\\[1mm]
&
\|(B(e^{-(u-i\theta)})-B(1))v_\Delta\| \leq C_2 |\theta|^\beta\| v_X \|_\theta^*.
\end{align*}
\end{lemma}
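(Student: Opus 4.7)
The plan is to mirror the proof of Lemma~\ref{lemma-Az-A1_2} verbatim, with Lemma~\ref{lemma-Bn2} replacing Lemma~\ref{lemma-An} as the source of the tail estimate. Specifically, Lemma~\ref{lemma-Bn2} gives
$$
\sum_{j \geq n} \|B_j v_\Delta\|_\theta \leq C \, \mu_0(\varphi > n)\, \|v_X\|_\theta^*,
$$
and both versions of (H0) yield $\mu_0(\varphi > n) = O(n^{-\beta})$ (directly in case a), and via the leading term $c n^{-\beta}$ dominating the $O(n^{-2\beta})$ remainder in case b)). Each $B_j v_\Delta$ is supported on the base $Y$ and lies in $\B_\theta(Y)$, so the norms above are well-defined.

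I would then apply Lemma~\ref{lemma-an} on the Banach space $(\B_\theta(Y), \|\cdot\|_\theta)$ with $a(z) = B(z) v_\Delta$ and exponent $\beta$. This produces the first inequality immediately. For the second inequality, the computation inside the proof of Lemma~\ref{lemma-an} adapts to base point $z=1$ and displacement $u - i\theta$ in place of $h$: one writes
$$
B(1) v_\Delta - B(e^{-(u-i\theta)}) v_\Delta = \sum_{n \geq 1}\bigl(1 - e^{-n(u-i\theta)}\bigr) B_n v_\Delta,
$$
splits the sum at $n \sim |\theta|^{-1}$, bounds $|1 - e^{-n(u-i\theta)}|$ by $2$ on the high range (using the tail estimate directly) and by $n(|\theta|+u)$ on the low range (then Abel summation against the tail bound), exactly as in Lemma~\ref{lemma-an}.

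The argument is essentially mechanical once the tail estimate is in place; the only small wrinkle, and what I expect to be the main point requiring care, is the radial variable $u$ in the second inequality, since Lemma~\ref{lemma-an} as stated handles only tangential continuity. This is harmless: the $u$-contribution in the splitting produces a term $\lesssim u^\beta$ by the identical argument, and in every invocation of this estimate $u$ is coupled to $|\theta|$ (typically $u \asymp 1/n$ with $|\theta| \leq \pi$), so the $u^\beta$ part is absorbed into $|\theta|^\beta$. With that observation, there is no substantive obstacle.
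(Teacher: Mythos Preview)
Your proposal is correct and follows essentially the same approach as the paper: invoke Lemma~\ref{lemma-Bn2} to obtain the tail estimate $\sum_{j\geq n}\|B_j v_\Delta\|_\theta \leq C\mu_0(\varphi>n)\|v_X\|_\theta^*$, combine with (H0) to get the $n^{-\beta}$ decay, and then appeal to Lemma~\ref{lemma-an}. The paper's proof is a one-liner doing exactly this; your additional remarks about handling the second inequality and the radial variable $u$ are more careful than the paper itself, which simply asserts that both inequalities follow from Lemma~\ref{lemma-an}.
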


\begin{proof} By Lemma~\ref{lemma-Bn2}, there exists $C>0$ such that
 $\sum_{j\geq n}\|B_j v_\Delta\|\leq C\| v_X \|_\theta^*$,  for all $n\geq 0$. The conclusion follows from Lemma~\ref{lemma-an}.~\end{proof}

The following result was obtained in~\cite[Lemma 4.1]{MT} (see also~\cite[Lemma 2.4]{MT11} and its proof for a different argument).

\begin{lemma}\label{lemma-T*} Assume that  $F$  is Gibbs Markov and (H0)  b) holds. Then
for all  $u\geq 0$ and $\theta\in (-\pi,\pi]$,  there exist $C_1, C_2>0$ such that 
$\|T^*(e^{-(u-i\theta)})\|_\theta\leq C_1|u-i\theta|^{-\beta}$.  Moreover, for all  $h>0$, all $u\geq 0$ and all $\theta\in (-\pi,\pi]$, 
$\|T^*(e^{-(u-i(\theta+h))})-T^*(e^{-(u-i\theta)})\|_\theta \leq C_2 h^{\beta}|u-i\theta|^{-2\beta}$.
\end{lemma}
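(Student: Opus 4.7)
By the renewal equation \eqref{eq-ren-tower}, $T^*(z) = (I - R^*(z))^{-1}$ on $\bar\D \setminus \{1\}$, so the task reduces to controlling this resolvent. By (P1)-(P2), $(I - R^*(z))^{-1}$ is uniformly bounded on compact subsets of $\bar\D \setminus \{1\}$, making the claimed bounds trivial once $|u - i\theta|$ is bounded away from $0$. The substantive regime is $z = e^{-(u-i\theta)} \to 1$, where I would apply Nagaev--Guivarc'h perturbation theory. Since $1$ is a simple isolated eigenvalue of $R^*(1)$ on $\B_\theta(Y)$, on a neighborhood of $1$ one has the spectral decomposition $R^*(z) = \lambda(z) \Pi(z) + N(z)$, with $\lambda(1) = 1$, $\Pi(1) = P$, $\Pi(z)N(z) = N(z)\Pi(z) = 0$, and the spectral radius of $N(z)$ bounded away from $1$. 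Consequently
\[
T^*(z) = \frac{\Pi(z)}{1 - \lambda(z)} + (I - N(z))^{-1},
\]
where the second term is uniformly bounded in $\|\cdot\|_\theta$.

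\textbf{Scalar asymptotic of $1 - \lambda(z)$.} This is the heart of the matter. Using the standard identity derived from the Nagaev decomposition (essentially, evaluating the projected eigenvalue equation against $\mu_0$ and applying Abel summation), $1-\lambda(z)$ reduces up to uniformly bounded corrections to a scalar Laplace transform involving $\mu_0(\varphi>n)$. Under (H0) b), Karamata's Tauberian theorem gives
\[
\sum_{n \geq 0} z^n \mu_0(\varphi > n) = c\, \Gamma(1-\beta) (1-z)^{\beta-1} + O(1),
\]
so $1 - \lambda(z) \sim c\,\Gamma(1-\beta)(1-z)^\beta$ as $z\to1$ in $\bar\D$. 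Since $|1-e^{-s}| \asymp |s|$ as $s = u - i\theta \to 0$, this yields $|1-\lambda(z)|^{-1} \lesssim |u - i\theta|^{-\beta}$, hence $\|T^*(z)\|_\theta \leq C_1 |u - i\theta|^{-\beta}$.

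\textbf{Difference estimate.} Writing $z = e^{-(u-i\theta)}$ and $z' = e^{-(u-i(\theta+h))}$, the resolvent identity gives
\[
T^*(z') - T^*(z) = T^*(z')(R^*(z') - R^*(z))T^*(z).
\]
From (P3) and (H0) b), $\|R_n^*\|_\theta = O(\mu_0(\varphi=n))$ and $\sum_{k>n}\|R_k^*\|_\theta = O(n^{-\beta})$, so Lemma~\ref{lemma-an} applied to $a(z)=R^*(z)$ yields $\|R^*(z') - R^*(z)\|_\theta \leq C h^\beta$. Splitting into cases $h \leq |u-i\theta|/2$ (so that $|u-i(\theta+h)| \asymp |u-i\theta|$ and one multiplies the three estimates directly) and $h > |u-i\theta|/2$ (so that $h^\beta |u-i\theta|^{-2\beta} \gtrsim |u-i\theta|^{-\beta}$, and one uses the triangle inequality together with the first part of the lemma), one arrives at $\|T^*(z') - T^*(z)\|_\theta \leq C_2 h^\beta |u-i\theta|^{-2\beta}$.

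\textbf{Main obstacle.} The delicate step is pinning down the asymptotic $1 - \lambda(z) \sim c\,\Gamma(1-\beta)(1-z)^\beta$. Because $\beta < 1$, the mean return time is infinite and $\lambda(z)$ fails to be differentiable at $z=1$; the expansion is in powers of $(1-z)^\beta$ rather than $(1-z)$. One must apply the Tauberian/Karamata input to the regularly varying tail $\mu_0(\varphi>n)=cn^{-\beta}+O(n^{-2\beta})$ carefully enough that the error term in (H0) b) does not pollute the leading order on the complex unit disk, and ensure that the eigenprojector $\Pi(z)$ (rather than just the scalar Laplace transform) contributes only a bounded correction. Once this scalar asymptotic is in hand, the operator bounds and the difference estimate follow by routine resolvent manipulations combined with Lemma~\ref{lemma-an}.
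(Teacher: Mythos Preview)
The paper does not prove this lemma; it simply cites \cite[Lemma 4.1]{MT} (with \cite[Lemma 2.4]{MT11} as an alternative). Your plan is exactly the strategy of \cite{MT}: Nagaev--Guivarc'h spectral decomposition $R^*(z)=\lambda(z)\Pi(z)+N(z)$ near $z=1$, the asymptotic $1-\lambda(z)\sim c\,\Gamma(1-\beta)(1-z)^\beta$ coming from the regularly varying tail in (H0)~b), and the resolvent identity for the difference. Your identification of the scalar asymptotic as the main obstacle is correct.

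One detail needs tightening. In the case $h>|u-i\theta|/2$ you invoke the triangle inequality and the first bound of the lemma, but while $\|T^*(z)\|_\theta\lesssim|u-i\theta|^{-\beta}\lesssim h^\beta|u-i\theta|^{-2\beta}$ as you note, the companion term $\|T^*(z')\|_\theta\lesssim|u-i(\theta+h)|^{-\beta}$ is \emph{not} in general dominated by $h^\beta|u-i\theta|^{-2\beta}$: take $\theta=-h$ and $0<u\ll h$, so that $|u-i(\theta+h)|=u$ while $|u-i\theta|\asymp h$, and the desired inequality would force $h\lesssim u$. In fact the lemma as literally stated (for \emph{all} $u\ge0$, $\theta\in(-\pi,\pi]$, $h>0$) cannot hold in that regime, since $\|T^*(e^{-u})\|_\theta\to\infty$ as $u\to 0$ while the claimed bound stays finite. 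What the resolvent identity genuinely yields is the symmetric estimate $\|T^*(z')-T^*(z)\|_\theta\lesssim h^\beta|u-i\theta|^{-\beta}|u-i(\theta+h)|^{-\beta}$, and this is all that is used downstream: in Lemma~\ref{lemma-b} one has $u=1/n$ and $h=\pi/n$, so $|u-i(\theta+h)|\ge u\asymp h$ and the two bounds coincide. So your approach is right; just drop the case split in favour of the direct resolvent-identity bound, and regard the stated form of the lemma as a harmless over-simplification of what \cite{MT} actually proves and what the present paper actually needs.
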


Combining Lemmas~\ref{lemma-Az-A1_2}, ~\ref{lemma-Bz-B1_2} and~\ref{lemma-T*}, we obtain

\begin{cor}\label{cor-IaIBinf}There exist positive constants $C_1, C_2$ such that
for all  $u\geq 0$ and all $\theta\in (-\pi,\pi]$, 
$|I_{\inf}^A(e^{-(u-i\theta)})(v_\Delta, w_\Delta)|\leq C_1\| v_X \|_\theta^*\| w_X \|_{L^\infty(\mu_X)}$ and similarly,
$|I_{\inf}^B(e^{-(u-i\theta)})(v_\Delta, w_\Delta)|\leq C_2 \| v_X \|_\theta^*\| w_X \|_{L^\infty(\mu_X)}$.
Moreover, there exist positive constants $C_3, C_4$ such that for all  $h>0$, all $u\geq 0$ and $\theta\in (-\pi,\pi]$,  
\begin{align*}
&|(I_{\inf}^A(e^{-(u-i(\theta+h)})-I_{\inf}^A(e^{-(u-i\theta)}))(v_\Delta, w_\Delta)|\leq C_3 h^{\beta} |u-i\theta|^{-\beta} \| v_X \|_\theta^*\|w_X\|_{L^\infty(\mu_X)}\\
& |(I_{\inf}^B(e^{-(u-i(\theta+h)})-I_{\inf}^B(e^{-(u-i\theta)}))(v_\Delta, w_\Delta)|\leq C_4 h^{\beta} |u-i\theta|^{-\beta} \| v_X \|_\theta^*\|w_X\|_{L^\infty(\mu_X)}
\end{align*}
\end{cor}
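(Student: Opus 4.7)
The approach is to combine the continuity estimates of Lemmas~\ref{lemma-Az-A1_2}, \ref{lemma-Bz-B1_2}, and~\ref{lemma-T*} via straightforward product-rule-type decompositions. The key observation that makes the ``uniform'' bounds actually independent of $u,\theta$ (despite $\|T^*(e^{-(u-i\theta)})\|_\theta \lesssim |u-i\theta|^{-\beta}$ potentially blowing up as $u,\theta \to 0$) is the elementary inequality $|\theta|^\beta \leq |u-i\theta|^\beta$, which absorbs the singularity of $T^*$ against the H\"older factors produced by Lemmas~\ref{lemma-Az-A1_2} and~\ref{lemma-Bz-B1_2}.

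For the first pair of bounds, write $I_{\inf}^A(z)(v_\Delta,w_\Delta) = \int_\Delta (A(z)-A(1))\, v_Y\, w_\Delta\, d\mu_\Delta$ with $v_Y := T^*(z) B(1)v_\Delta$, which is supported on $Y$ since $B(1)v_\Delta$ is and $T^*$ preserves functions on $Y$. Lemma~\ref{lemma-Bn2} (applied at $n=0$) gives $\|B(1)v_\Delta\|_\theta \leq C\|v_X\|_\theta^*$, and Lemma~\ref{lemma-T*} then yields $\|v_Y\|_\theta \leq C|u-i\theta|^{-\beta}\|v_X\|_\theta^*$. The second bound of Lemma~\ref{lemma-Az-A1_2} applied with $v_Y$ in place of $v_Y$ gives a bound of $C |\theta|^\beta |u-i\theta|^{-\beta}\|v_X\|_\theta^*\|w_X\|_\infty^*$, and the ratio $|\theta|/|u-i\theta|$ is at most $1$. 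The bound for $I_{\inf}^B$ is symmetric: write $I_{\inf}^B(z) = \int_\Delta A(1)\tilde v_Y\, w_\Delta\,d\mu_\Delta$ with $\tilde v_Y := T^*(z)(B(z)-B(1))v_\Delta$; then Lemmas~\ref{lemma-Bz-B1_2} and~\ref{lemma-T*} yield $\|\tilde v_Y\|_\theta \leq C|\theta|^\beta|u-i\theta|^{-\beta}\|v_X\|_\theta^*$, which is $\leq C\|v_X\|_\theta^*$, and Lemma~\ref{lemma-An} at $n=0$ controls the integral.

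For the modulus of continuity on $I_{\inf}^A$, with $z = e^{-(u-i\theta)}$ and $z' = e^{-(u-i(\theta+h))}$, telescope
\[
(A(z')-A(1))T^*(z')B(1) - (A(z)-A(1))T^*(z)B(1)
= [A(z')-A(z)]\,T^*(z')B(1) + (A(z)-A(1))\,[T^*(z')-T^*(z)]\,B(1).
\]
For the first summand, use the first bound of Lemma~\ref{lemma-Az-A1_2} (continuity in $z$, giving a factor $h^\beta$) together with the sup bound for $T^*(z')B(1)v_\Delta$ from Lemmas~\ref{lemma-T*} and~\ref{lemma-Bn2} to obtain $C h^\beta |u-i(\theta+h)|^{-\beta}\|v_X\|_\theta^*\|w_X\|_\infty^*$. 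For the second summand, use the second bound of Lemma~\ref{lemma-Az-A1_2} (giving a factor $|\theta|^\beta$) together with the continuity estimate $\|T^*(z')-T^*(z)\|_\theta \leq Ch^\beta|u-i\theta|^{-2\beta}$ from Lemma~\ref{lemma-T*} and $\|B(1)v_\Delta\|_\theta \leq C\|v_X\|_\theta^*$; this yields $C|\theta|^\beta h^\beta |u-i\theta|^{-2\beta}\|v_X\|_\theta^*\|w_X\|_\infty^* \leq C h^\beta |u-i\theta|^{-\beta}\|v_X\|_\theta^*\|w_X\|_\infty^*$, again by $|\theta|^\beta \leq |u-i\theta|^\beta$. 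The estimate for $I_{\inf}^B$ is obtained by the mirror telescoping $A(1)T^*(z')[B(z')-B(1)] - A(1)T^*(z)[B(z)-B(1)]$, with Lemmas~\ref{lemma-Bz-B1_2} and~\ref{lemma-An} playing the roles that Lemmas~\ref{lemma-Az-A1_2} and the second appearance of the bound on $B(1)v_\Delta$ played above.

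The only mild subtlety is the discrepancy between $|u-i(\theta+h)|$ and $|u-i\theta|$ in the first summand: if $h \geq |u-i\theta|/2$ the desired bound $C h^\beta |u-i\theta|^{-\beta}$ already dominates the uniform constant from the first part of the corollary, while if $h \leq |u-i\theta|/2$ then $|u-i(\theta+h)|^{-\beta} \leq C|u-i\theta|^{-\beta}$ automatically. No step is genuinely hard; the main task is bookkeeping, i.e.\ ensuring that $B(1)v_\Delta$ (resp.\ $(B(z)-B(1))v_\Delta$) is controlled in $\|\cdot\|_\theta$ so that $T^*(z)$ can be applied and the result still lies in $\B_\theta(Y)$.
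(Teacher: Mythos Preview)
Your proposal is correct and is precisely the combination the paper has in mind: the paper's own ``proof'' of this corollary is the single line ``Combining Lemmas~\ref{lemma-Az-A1_2}, \ref{lemma-Bz-B1_2} and~\ref{lemma-T*}, we obtain'', and you have spelled out exactly that combination, including the product-rule telescoping, the use of $|\theta|^\beta \le |u-i\theta|^\beta$ to cancel the $T^*$ singularity, and the standard $h \lessgtr |u-i\theta|/2$ case split to reconcile $|u-i(\theta+h)|^{-\beta}$ with $|u-i\theta|^{-\beta}$. The only remark is that the corollary as stated in the paper carries $\|w_X\|_{L^\infty(\mu_X)}$ on the right-hand side, whereas the input Lemmas~\ref{lemma-An} and~\ref{lemma-Az-A1_2} produce $\|w_X\|_\infty^*$; your argument therefore yields the $\|w_X\|_\infty^*$ version, which is what is actually used downstream (cf.\ Lemma~\ref{lemma-IAn-IBn} and Theorem~\ref{prop-corel-infinite}).
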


\subsection{Proof of Lemma~\ref{lemma-IAn-IBn}}

The first result below will be instrumental in the proof of Lemma~\ref{lemma-IAn-IBn}.

\begin{lemma}\label{lemma-b}
 Let $b(z)$ be a function well defined on $\D$.
Assume that there exist $C_1, C_2>0$ such that
for any $h>0$ and for all $\theta\in (-\pi,\pi]$,  $|b(e^{-(u-i\theta)})|\leq C_1$ 
 and $|b(e^{-(u-i(\theta+h))})-b(e^{-(u-i\theta)})|\leq C_2 h^\beta|u-i\theta|^{-\beta}$, for $\beta\in (0,1)$.
Then the $n$-th coefficient $b_n$ of $b(z)$, $z\in \D$ is $O(n^{-\beta})$.
\end{lemma}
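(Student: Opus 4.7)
The plan is to apply the standard Cauchy/Fourier coefficient formula combined with a classical shift trick that turns the H\"older-type modulus of continuity of $b$ into decay of the Taylor coefficients. Since $b$ is well defined on $\D$, its $n$-th Taylor coefficient can be extracted from the circle $|z| = r := e^{-u}$ via
\[
2\pi r^n b_n = \int_{-\pi}^{\pi} b(e^{-(u-i\theta)})\, e^{-in\theta}\, d\theta.
\]
I will choose $u = 1/n$ so that $r^n = e^{-1}$ is an innocuous constant, and so that the integrand's singularity factor $|u - i\theta|^{-\beta}$ is controlled on scale $1/n$.

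The key step is the cancellation/shift trick. Since $e^{-in\pi/n} = -1$, the substitution $\theta \mapsto \theta + \pi/n$, together with $2\pi$-periodicity of $b(e^{-(u-i\theta)}) e^{-in\theta}$, gives
\[
2\pi r^n b_n = -\int_{-\pi}^{\pi} b(e^{-(u-i(\theta + \pi/n))})\, e^{-in\theta}\, d\theta.
\]
Averaging the two representations of $2\pi r^n b_n$ produces
\[
4\pi r^n b_n = \int_{-\pi}^{\pi} \bigl[\,b(e^{-(u-i\theta)}) - b(e^{-(u-i(\theta + \pi/n))})\,\bigr]\, e^{-in\theta}\, d\theta.
\]
Applying the hypothesised H\"older estimate with $h = \pi/n$ bounds the bracket by $C_2 (\pi/n)^\beta |u-i\theta|^{-\beta}$ pointwise, so
\[
|b_n| \ll n^{-\beta} \int_{-\pi}^{\pi} |u - i\theta|^{-\beta}\, d\theta.
\]

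Finally, since $|u - i\theta| \geq |\theta|$, the remaining integral is bounded by $\int_{-\pi}^{\pi} |\theta|^{-\beta}\, d\theta = \tfrac{2\pi^{1-\beta}}{1-\beta}$, which is finite precisely because $\beta \in (0,1)$; this constant is in particular independent of $n$, giving $|b_n| = O(n^{-\beta})$. The only mild obstacle is recognising the shift trick; the uniform bound $|b| \leq C_1$ from the hypothesis is not actually needed in the estimate (it only guarantees that the contour integral is well defined, which already follows from analyticity on $\D$). The argument is completely scalar and transparent, and is what makes Lemma~\ref{lemma-an} (and hence the vector-valued estimates used earlier) work by exactly the same pattern with norms in place of absolute values.
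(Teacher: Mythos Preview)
Your proof is correct and follows the same core strategy as the paper: express $b_n$ via Cauchy's formula on the circle of radius $e^{-1/n}$ and exploit the half-period shift $\theta \mapsto \theta + \pi/n$ to create a difference that the H\"older hypothesis controls. The paper's version is slightly more elaborate: it first splits off the interval $|\theta| \leq 1/n$ (bounded by $C_1 n^{-1}$ using the uniform bound) and then applies the shift only on $[1/n,\pi]$, producing two extra boundary terms $I_1, I_2$ that are again controlled via $C_1$. Your route applies the shift on the full interval $[-\pi,\pi]$ and absorbs the singularity at $\theta = 0$ directly through the integrability of $|\theta|^{-\beta}$ (valid precisely because $\beta < 1$); this is cleaner and, as you correctly observe, renders the uniform bound $|b|\le C_1$ unnecessary for the estimate itself.
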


\begin{proof}
We give the standard short proof only for completeness. We estimate the coefficients of $b(z)$, $z\in\D$, on the circle
 $\Gamma=\{e^{-u}e^{i\theta}:-\pi\leq \theta<\pi\}$ with $e^{-u}=e^{-1/n}$, where $n\geq 1$. Write
 \begin{align*}
b_n=\frac{1}{2\pi i}\int_\Gamma \frac{b(z)}{z^{n+1}} dz=
\frac{e}{2\pi}\int_{-\pi}^{\pi} b(e^{-1/n}e^{i\theta})e^{-in\theta}d\theta.
\end{align*}
Note that 
\[
|b_n|\ll \Big|\int_0^{1/n} b(e^{-1/n}e^{i\theta}) e^{-in\theta}\,d\theta\Big|+\Big|\int_{1/n}^{\pi} b(e^{-1/n}e^{i\theta})e^{-in\theta}\,d\theta\Big|.
\]
Since $| b(e^{-1/n}e^{i\theta})|\leq C_1$ we have $|\int_0^{1/n} b(e^{-1/n}e^{i\theta}) e^{-in\theta}\,d\theta|\leq C_1 n^{-1}$. To estimate the second term,
let $I:=\int_{1/n}^{\pi} b(e^{-1/n}e^{i\theta}) e^{-in\theta}\,d\theta$ and  note that
\[
I=\int_{1/n}^\pi  b(e^{-1/n}e^{i\theta})e^{-in\theta}\,d\theta
=-\int_{(1+\pi)/n}^{\pi+\pi/n} b(e^{-1/n}e^{i\theta}) e^{-in\theta}\,d\theta.
\]
Thus,
\[
2I=\int_{1/n}^\pi  b(e^{-1/n}e^{i\theta}) e^{-in\theta}\,d\theta
-\int_{(1+\pi)/n}^{\pi+\pi/n}b( e^{-1/n}e^{i(\theta-\pi/n)}) e^{-in\theta}\,d\theta=I_1+I_2-I_3,
\]
where
\begin{align*}
I_1  & = \int_\pi^{\pi+\pi/n} b(e^{-1/n}e^{i\theta})e^{-in\theta}\,d\theta, \qquad
I_2  = \int_{1/n}^{(1+\pi)/n} b(e^{-1/n}e^{i\theta}) e^{-in\theta}\,d\theta, \\
I_3 & =\int_{(1+\pi)/n}^\pi (b( e^{-1/n}e^{i(\theta-\pi/n)})-b(e^{-1/n}e^{i\theta}))e^{-in\theta}\,d\theta.
\end{align*}
Clearly, $|I_1|\ll n^{-1}$ and $|I_2|\ll n^{-1}$. By assumption, $|b( e^{-1/n}e^{i(\theta-\pi/n)})-b(e^{-1/n}e^{i\theta})|\leq C_2 n^{-\beta}|1/n-i\theta|^{-\beta}$.
Thus,  $|I_3|\ll n^{-\beta}$ and the conclusion follows.
\end{proof}

We can now complete
\begin{proof}[Proof of Lemma~\ref{lemma-IAn-IBn}] 
By Corollary~\ref{cor-IaIBinf},  $I_{\inf}^A(e^{-(u-i\theta)})(v_\Delta, w_\Delta)$ and $I_{\inf}^B(e^{-(u-i\theta)})(v_\Delta, w_\Delta)$
satisfy (in $|\, |$) the assumptions of Lemma~\ref{lemma-b} (where the involved constants include the product $\| v_X \|_\theta^*\|w_X\|_{\infty}$ ).
The conclusion follows by applying Lemma~\ref{lemma-b} to $I_{\inf}^A(e^{-(u-i\theta)})(v_\Delta, w_\Delta)$ and $I_{\inf}^B(e^{-(u-i\theta)})(v_\Delta, w_\Delta)$.
\end{proof}

\section{Non-Markov interval maps with indifferent fixed points}
\label{sec-AFN}

The works~\cite{Zweimuller98,Zweimuller00} studied a class
of non-Markov interval maps $f:[0,1] \to [0,1]$,
with indifferent fixed points, called
AFN maps, which stands for {\bf F}inite image, {\bf N}on-uniformly expanding
maps satisfying {\bf A}dler's distortion condition:
$f''/(f')^2$ is bounded.

\subsection{Known results for $f$ via first return inducing}
\label{sec-knownres}
For infinite measure preserving topologically mixing AFN maps $(f, [0,1],\mu)$, with $\mu_\tau(\tau>n)=n^{-\beta}\ell(n)$ for $\beta\in (1/2,1)$
and $\ell$ a slowly varying function,
and transfer operator $L$,~\cite[Theorem 1.1]{MT} shows that  $\lim_{n\to\infty}\ell(n)n^{1-\beta}L^n v=\frac{\sin\pi\beta}{\pi} \int v\, d\mu$,
uniformly on compact subsets of
$[0,1]\setminus I_p$, where $I_p$ is the set of indifferent fixed points,
for all $v=u/h$,  $u$ is a Riemann integrable on $[0,1]$ and  $h(x)=\frac{d\mu(x)}{dx}$. In particular, 
~\cite[Theorem 1.1]{MT} holds in the setting of \eqref{eq:f-neutral} below, for $v(x)=x^q$ with $q\beta\geq 1$.
For the LSV family of maps studied in~\cite{LiveraniSaussolVaienti99}, which induce with first return to a Gibbs Markov map, 
the work~\cite{MT} also obtains higher order asymptotics of $L^n v$, for some suitable $v$ supported on $(0,1]$ (we recall
 that such a map has a single indifferent fixed point at $0$); this result of~\cite{MT} implies higher order asymptotics for
the correlation function $\rho_n(v,w)=\int v w\circ f^n\,d\mu$ associated with the LSV family of
 maps~\cite{LiveraniSaussolVaienti99} (for the suitable $v$ and $w\in L^\infty$). These results on higher order asymptotics
have been improved in~\cite{Terhesiu12} and again, they apply to LSV maps~\cite{LiveraniSaussolVaienti99}.
Higher order
asymptotics of $\rho_n(v,w)$ in the setting of AFN maps without
Markov partition has \emph{ not} been addressed. The only obstacle in~\cite{MT, Terhesiu12} was that
the invariant density of the induced map is $BV$ and thus, the arguments used in~\cite{MT, Terhesiu12} to obtain higher order expansion
of $\mu_\tau(\tau>n)$ (which require smoothness of the induced invariant density)
do not apply\footnote{Higher order  expansion of $\mu(\tau>n)$
is required for results  which aim to address any type of error term in the infinite measure set-up: see~\cite{MT, Terhesiu12, Terhesiu12b}.}.

In what follows, in the process of verifying (H0) and (H1) for AFN maps, we obtain excellent estimates for $\mu_\tau(\tau>n)$.
This allows one to infer that the results in~\cite{MT, Terhesiu12} on higher order asymptotics of $L^n$ hold in the setting
of \eqref{eq:f-neutral}, a typical examples in the class
of AFN maps~\cite{Zweimuller98,Zweimuller00}; we recall that Theorems~\ref{prop-corel-finite}  and
\ref{prop-corel-infinite} 
only address the asymptotics of the correlation function $\rho_n(v,w)$
for appropriate $v,w$ (so a weaker result than higher order asymptotics of $L^n$). For details we refer to Section~\ref{sec-moregen}.

In the setting of finite measure preserving non-Markov, non-uniformly expanding interval maps $f:[0,1] \to [0,1]$
with a single indifferent fixed point at $0$, the works \cite{MT13, HuVaienti} consider
a first return induced map to $Y = [z,1]$, $z > 0$,
 to obtain upper/sharp mixing rates. The relevant Banach space in which renewal type arguments are developed or verified is 
$BV$.  The sharp results in \cite{HuVaienti} are for observables supported on $Y$.

\subsection{Verifying conditions (H0)-(H2)}\label{sec-ver}
One can verify the abstract conditions in Section~\ref{sec-frame},
and hence prove Theorems~\ref{prop-corel-finite}  and
\ref{prop-corel-infinite} for the general class of 
AFN maps  studied~\cite{Zweimuller98,Zweimuller00}.
For simplicity, we restrict here to the following example:
\begin{equation}\label{eq:f-neutral}
f(x) = f_{\alpha,b}(x) = x(1+ b x^\alpha) \bmod 1, \qquad
\alpha > 0,\, b \in (0,1].
\end{equation}
We induce on the interval $Y= [e_0,1]$, where $e_0 \in (0,1)$ is such that
$f(e_0) = 0$.
The fact that the orbit of $e_0$ is disjoint from the interior of $Y$ implies that
$e_0 \not\in f^i(a)$ for every $a \in \alpha$, $0 \leq i < \varphi(a)$, and therefore
condition (H2) follows immediately.

Adler's condition fails at $x = 0$ if $\alpha \in (0,1)$ 
in~\eqref{eq:f-neutral}, but the first return map $f^\tau$ to $Y$ 
is uniformly expanding and Adler's condition does hold for it.
This gives a uniform bound on the distortion of $g:=f^\tau$.
Indeed, if $g:J \to g(J)$ is a branch of $f^\tau$ with
$|g''(s)/(g'(s))^2| \leq C$, then for all $x,y\in J$,
\begin{eqnarray}\label{eq:distortion}
\left|\log \frac{g'(y)}{g'(x)}\right| &=& \left|\int_x^y \frac{d}{ds} \log g'(s) \, ds\right|  \nonumber\\
&=& \int_x^y \left|\frac{g''(s)}{g'(s)}\right| \, ds
\leq  C\int_x^y |g'(s)| \, ds = C |g(y)-g(x)|.
\end{eqnarray}
The same bound applies to iterates of $g$.
As a consequence, the proportion of subintervals of
the branch domains of $g^k$ doesn't vary too much under the map $g^k$.
This fact will be used throughout this section.

In general, $f$ is not Markov, but preserves an absolutely
continuous measure which is finite if and only if $\alpha \in (0,1)$.
Set $\beta = 1/\alpha$.
Let $x_0 = e_0$ and for $n \geq 1$, define recursively $x_{n+1} < x_n$ 
so that $f(x_n) = x_{n-1}$.
From \cite{Holland05} (in fact, sharper estimates can be found in 
\cite[Section B]{Terhesiu12}) one can establish the asymptotics
\begin{equation}\label{eq:asymp}
x_n = \frac{c^*}{(n+1)^{\beta}} + O\left(\frac{\log(n+1)}{(n+1)^{\beta+1}} \right)
\qquad  \text{ for some } c^* = c^*(\alpha) > 0.
\end{equation} 
For instance the condition $\| v_X \|_\infty^* < \infty$
can thus be written as $\sup_{x\in (0,1]} x^{-(1+\eps)/\beta } |v_X(x)| < \infty$.

For each $k \geq 1$, let $e_k > e_{k-1}$ be the right-most point
such that $f^{\tau_k(e_k)}(e_k) = e_0$. Then $f^{\tau_k}$ maps $[e_k, 1)$
monotonically but in general not surjectively into $Y$.
The general return time is
$\varphi(y) = \tau_k(y) + \tau \circ f^{\tau_k(y)}(y)$ for $y \in [e_k, e_{k+1})$.
The map $f^\varphi$ has only onto branches and thus, it is a Gibbs Markov induced map with good distortion 
properties derived from \eqref{eq:distortion}, and $\{ y \in Y : \varphi(y) = \tau_{k+1}(y) \} = [e_k, e_{k+1})$.

\begin{lemma}\label{lem:H0}
The family of maps~\eqref{eq:f-neutral} satisfy condition (H0).
\end{lemma}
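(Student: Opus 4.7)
The plan is to reduce the tail of $\mu_0(\varphi > n)$ to the sharp asymptotics \eqref{eq:asymp} of the preimages $x_n$ of the indifferent fixed point, exploiting the geometric description of the intervals $[e_k,e_{k+1})$.

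First, since $f^\varphi$ is Gibbs Markov with uniformly bounded distortion derived from Adler's condition on $g=f^\tau$ (see \eqref{eq:distortion}), its invariant density is bounded above and below on $Y$. Hence it suffices to estimate the Lebesgue measure of $\{\varphi>n\}$. Using the description $\varphi|_{[e_k,e_{k+1})}=\tau_{k+1}$, the map $f^{\tau_k}$ is a homeomorphism from $[e_k,e_{k+1})$ onto $[e_0,e_1)$ with bounded distortion, so the Lebesgue measure of any set in $[e_k,e_{k+1})$ equals, up to a multiplicative constant, $D_k^{-1}$ times the Lebesgue measure of its image in $[e_0,e_1)$, where $D_k=|(f^{\tau_k})'(y)|$ for any $y\in[e_k,e_{k+1})$.

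Next I would analyze the tail of $\tau$ on $[e_0,e_1)$. Since $f(e_0)=0$ and $f|_Y$ is uniformly expanding (Adler's condition holds for the right branch), for $y\in[e_0,e_1)$ we have $\tau(y)>n$ iff $f(y)<x_{n-1}$. Combined with \eqref{eq:asymp}, this yields
\[
|\{y\in[e_0,e_1):\tau(y)>n\}|=\tilde c\, n^{-\beta}+O(n^{-\beta-1}\log n).
\]
For $\beta<1$ we have $\beta+1>2\beta$, so the error term is $O(n^{-2\beta})$, as required by (H0)(b); for $\beta>1$ the bound $O(n^{-\beta})$ is immediate. Then one writes
\[
\{\varphi>n\}=\bigsqcup_{k\geq 0}\{y\in[e_k,e_{k+1}):\tau_{k+1}(y)>n\},
\]
and on $[e_k,e_{k+1})$ one has $\tau_{k+1}(y)=\tau_k(y)+\tau(f^{\tau_k}(y))$. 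The dominant piece is $\{\tau(f^{\tau_k}(y))>n\}$, whose pullback by $f^{\tau_k}$ contributes $D_k^{-1}\tilde c\, n^{-\beta}$ up to multiplicative distortion. Summing in $k$ gives the leading constant $c=\tilde c\sum_k D_k^{-1}$ (which is finite since $\sum_k|e_{k+1}-e_k|\leq 1$).

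The main obstacle is the dependence of $\tau_k(y)$ on $y\in[e_k,e_{k+1})$: replacing $n-\tau_k(y)$ by $n$ in the tail must be controlled. Split the sum at $k$ of order $n/C$ with $C$ large; for small $k$ use the expansion of the tail to see that replacing $n$ by $n-\tau_k(y)$ changes the contribution by $O(\tau_k\cdot n^{-\beta-1})$, and since $\sum_k\tau_k D_k^{-1}=\bar\varphi<\infty$ in the finite case (resp.\ grows like a power in the infinite case bounded by $n^{1-\beta}$), the total correction fits in $O(n^{-2\beta})$ using $\beta+1>2\beta$. For large $k$, bound $\{\varphi>n\}\cap[e_k,e_{k+1})$ crudely by $|e_{k+1}-e_k|$ and use geometric decay of the tail of $D_k^{-1}$ derived again from \eqref{eq:asymp} and \eqref{eq:distortion}. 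Assembling the estimates yields (H0)(a) when $\beta>1$ and (H0)(b) with the sharp remainder $O(n^{-2\beta})$ when $\beta\in(1/2,1)$.
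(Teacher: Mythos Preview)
Your strategy---decompose $\{\varphi>n\}$ over the levels $[e_k,e_{k+1})$, pull back by $f^{\tau_k}$ to $[e_0,e_1)$, and use the tail asymptotics of $\tau$ coming from \eqref{eq:asymp}---is the same skeleton as the paper's proof, and for (H0)(a) it is adequate. For (H0)(b), however, there is a genuine gap. You pass from $\mu_0$ to Lebesgue measure using only that the density is bounded above and below, and then compute Lebesgue measures of pullbacks ``up to multiplicative distortion.'' Neither step preserves a sharp constant: an $O(1)$ multiplicative uncertainty in each $k$-summand makes it impossible to conclude $\mu_0(\varphi>n)=cn^{-\beta}+O(n^{-2\beta})$, and your formula $c=\tilde c\sum_k D_k^{-1}$ is then not justified. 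The paper handles this by using that the density $h$ is $C^2$ and Taylor-expanding both $h$ and the inverse branch $g_k$ of $f^{\tau_k+1}$ to second order at $e_k$, obtaining $\mu_0(A_k)=\frac{h(e_k)c^*}{\lambda_k}(n-\tau_k)^{-\beta}$ plus an explicit $O((n-\tau_k)^{-2\beta})$ term; only then can one sum to a genuine constant $c=c^*\sum_k h(e_k)/\lambda_k$.

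A second issue is the summability of $\sum_k \tau_k D_k^{-1}$. Your justification via $\bar\varphi$ fails precisely in the case you care about, since $\bar\varphi=\infty$ when $\beta<1$; and ``grows like $n^{1-\beta}$'' would not give an $O(n^{-2\beta})$ remainder after multiplying by $n^{-\beta-1}$. The paper proves instead that $\tau_k/\lambda_k$ decays exponentially, via the product bound $1/\lambda_k\ll\prod_{j=1}^k 1/\max\{\sigma_j/\gamma,\lambda\}$ with $\tau_k=\sum_j\sigma_j$; this holds for all $\beta$ and is what makes the replacement of $n-\tau_k$ by $n$ cost only $O(n^{-(\beta+1)})$. (Incidentally, $\tau_k(y)$ is actually constant on $[e_k,e_{k+1})$, equal to $\tau_k(1)$, so the variability you flag is not the obstacle---but controlling the shift $n\mapsto n-\tau_k$ still requires exactly this exponential estimate.)
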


\begin{proof}
For $n \geq 1$, let $A_k := \{ y \in Y : \varphi(y) = \tau_{k+1}(y) > n\} \subset [e_k, e_{k+1})$.
Let in the remainder of this section $\tau_k = \tau_k(1)$.
We first estimate the derivatives $\lambda_k := Df^{\tau_k+1}(e_k)$ and lengths $|A_k|$.

For $j \geq 1$, let $y_j \in [e_0, e_1)$ be such that $f(y_j) = x_{j-1}$. Hence
$\tau(y_j) = j$ and $f^j(y_j) = e_0$,
so that $\{ \tau > n\} = (e_0, y_n)$.
Let $\sigma_j$ be the integers such that $f^{\tau_j}(1) \in [y_{\sigma_{j+1}},
 y_{\sigma_{j+1}-1})$ for $j \geq 0$, see Figure~\ref{fig:AFN}.
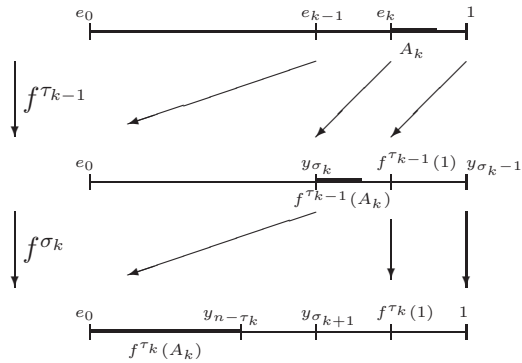
\begin{figure}[h]
\begin{center}
\unitlength=10mm
\begin{picture}(10,5)(2,-0.3)
\put(5,4){\line(1,0){5}} 
\put(9,4.03){\line(1,0){0.6}}\put(9.1,3.7){\tiny $A_k$}
\put(5,3.9){\line(0,1){0.2}} \put(4.8,4.2){\tiny $e_0$}
\put(8,3.9){\line(0,1){0.2}} \put(7.8,4.2){\tiny $e_{k-1}$}
\put(9,3.9){\line(0,1){0.2}} \put(8.8,4.2){\tiny $e_k$}
\put(10,3.9){\line(0,1){0.2}} \put(10,4.2){\tiny $1$}
\put(8,3.6){\vector(-3,-1){2.5}}
\put(9,3.6){\vector(-1,-1){1}}
\put(10,3.6){\vector(-1,-1){1}}
\put(4,3.6){\vector(0,-1){1}}\put(4.1,3){$f^{\tau_{k-1}}$}
\put(5,2){\line(1,0){5}} 
\put(8.3,2.03){\line(1,0){0.3}}
\put(8,2.03){\line(1,0){0.5}}\put(7.7,1.7){\tiny $f^{\tau_{k-1}}(A_k)$}
\put(5,1.9){\line(0,1){0.2}} \put(4.8,2.2){\tiny $e_0$}
\put(8,1.9){\line(0,1){0.2}} \put(7.8,2.2){\tiny $y_{\sigma_k}$}
\put(9,1.9){\line(0,1){0.2}} \put(8.8,2.2){\tiny $f^{\tau_{k-1}}(1)$}
\put(10,1.9){\line(0,1){0.2}} \put(10,2.2){\tiny $y_{\sigma_k-1}$}
\put(8,1.6){\vector(-3,-1){2.5}}
\put(9,1.5){\vector(0,-1){0.8}}
\put(10,1.6){\vector(0,-1){1}}
\put(4,1.6){\vector(0,-1){1}}\put(4.1,1){$f^{\sigma_k}$}
\put(5,0){\line(1,0){5}} 
\put(5,0.03){\line(1,0){2}}\put(5.5,-0.3){\tiny $f^{\tau_k}(A_k)$}
\put(5,-0.1){\line(0,1){0.2}} \put(4.8,0.2){\tiny $e_0$}
\put(7,-0.1){\line(0,1){0.2}} \put(6.5,0.2){\tiny $y_{n-\tau_{k}}$}
\put(8,-0.1){\line(0,1){0.2}} \put(7.8,0.2){\tiny $y_{\sigma_{k+1}}$}
\put(9,-0.1){\line(0,1){0.2}} \put(8.8,0.2){\tiny $f^{\tau_k}(1)$}
\put(10,-0.1){\line(0,1){0.2}} \put(9.9,0.2){\tiny $1$}
\end{picture}
\caption{\label{fig:AFN} The points $e_{k-1}, e_k, 1$ and set $A_k$ 
and their images.}
\end{center}
\end{figure}
Then also $f^{\tau_{k-1}}(A_k) \subset f^{\tau_{k-1}}([e_k, 1) ) \subset 
[y_{\sigma_k},  y_{\sigma_k-1})$ for each $k \geq 0$.
Using \eqref{eq:distortion} (or the fact that 
all branches of $f^\tau$ are convex upwards), we find
$$
\frac{|1-e_{j+1}|}{|1-e_j|} \le
\frac{|f^{\tau_j}(1)-y_{\sigma_{j+1}}|}{|f^{\tau_j}(1) - e_0|} \le
\frac{|y_{\sigma_{j+1}-1} - y_{\sigma_{j+1}}|}{|y_{\sigma_{j+1}} - e_0|}
\frac{|f^{\tau_j}(1)-y_{\sigma_{j+1}}|}{|y_{\sigma_{j+1}-1} - y_{\sigma_{j+1}}|}.
$$
Using \eqref{eq:asymp}, we can bound 
$\frac{|y_{\sigma_{j+1}-1} - y_{\sigma_{j+1}}|}{|y_{\sigma_{j+1}} - e_0|} \leq \min( \gamma/\sigma_{j+1}, 1/\lambda)$
for some uniform $\gamma > 0, \lambda > 1$.
We bound the second factor $\frac{|f^{\tau_j}(1)-y_{\sigma_{j+1}}|}{|y_{\sigma_{j+1}-1} - y_{\sigma_{j+1}}|} \leq 1$, except for $j = k-1$.
Taking the product over $j = 0,\dots, k-1$ gives
\begin{equation}\label{eq:p1}
\frac{|1-e_k|}{|1-e_0|} \le
\frac{|f^{\tau_{k-1}}(1)-y_{\sigma_k}|}{|y_{\sigma_k-1} - y_{\sigma_k}|}
\prod_{j=1}^k \min\{ \frac{\gamma}{\sigma_j} , \frac1\lambda \}.
\end{equation}
Boundedness of distortion of
$f^{\sigma_k}:[y_{\sigma_k} , y_{\sigma_k-1}) \to [e_0,1)$ together with \eqref{eq:asymp} implies
\begin{equation}\label{eq:p2}
\frac{|f^{\tau_{k-1}}(1)-y_{\sigma_k}|}{|y_{\sigma_k-1} - y_{\sigma_k}|}
\ll \frac{|f^{\tau_k}(1)-e_0|}{|1-e_0|} \le
\frac{c^*}{|1-e_0|} \sigma_{k+1}^{-\beta} (1+o(1)).
\end{equation}
The boundedness of distortion of $f^{\tau_k+1}: [e_k,1] \to [0, f^{\tau_k+1}(1)]$
combined with \eqref{eq:p1} and  the first inequality of \eqref{eq:p2}
leads to
\begin{align*}
\frac{1}{\lambda_k} &\ll \frac{1}{f'(e_0)} \frac{|1-e_k|}{|f^{\tau_k}(1)-e_0|}\\
&\ll
\frac{|f^{\tau_{k-1}}(1)-y_{\sigma_k}|}{|y_{\sigma_k-1} - y_{\sigma_k}|}
\frac{|1-e_0|}{|f^{\tau_k}(1)-e_0|}
\prod_{j=1}^k \min\{ \frac{\gamma}{\sigma_j} , \frac1\lambda \}  
\ll \prod_{j=1}^k  \frac{1}{ \max\{ \sigma_j/\gamma , \lambda \}}.
\end{align*}
As $\tau_k = \sum_{j=1}^k \sigma_j$, the quotients
$\tau_k/\lambda_k$ decrease exponentially and
are thus summable in $k$.

Define $h_{\max} := \sup_{x\in [e_0,1]} h(x)$ for
the density $h = \frac{d\mu_0}{dx}$. Take $n_0$ so large that  
\begin{equation}\label{eq:n0}
\prod_{j=1}^{k_0}  \frac{1}{ \max\{ \sigma_j/\gamma , \lambda \}} <  \frac1{n^{\beta+1}}
\quad \text{ and } \quad
c^* \sum_{k = k_0}^\infty \frac{h_{\max} }{\lambda_k} <  \frac1{n^{\beta+1}}
\end{equation}
for all $n \geq n_0$, where $k_0 = k_0(n) = \min\{k \geq 1 : 2\tau_{k+2} \geq n\}$.

Let $g_k$ denote the inverse branch of $f^{\tau_k+1}: A_k \to [0, x_{n-\tau_k-1}]$.
With the notation $\lambda_k = Df^{\tau_k+1}(e_k)$,
$\lambda'_k = D^2f^{\tau_k+1}(e_k)$ and
$q_k = -\lambda'_k/\lambda_k^2$ (which is bounded in $k$ because of
Adler's condition), we find
$$
g_k(x)-e_k = \frac1{\lambda_k}\left(x + \frac{q_k}{2} x^2 + O(x^3) \right).
$$
Since the density $h$ is $C^2$ smooth,
$\mu_0(A_k) = h(e_k) (g_k(x_{n-\tau_k}) - e_k) + \frac{h'(e_k)}{2} (g_k(x_{n-\tau_k}) - e_k)^2+ O((g_k(x_{n-\tau_k}) - e_k)^3)$.
Inserting the asymptotics for $g_k(x)-e_k$ and for
$x_{n-\tau_k-1}$ from \eqref{eq:asymp}, gives
\begin{eqnarray*}
\mu_0(A_k) &=& \frac{h(e_k)c^*}{\lambda_k} (n-\tau_k)^{-\beta}
+  \frac{(c^*)^2}{2\lambda_k} \Big(h(e_k) q_k + \frac{h'(e_k)}{\lambda_k}\Big) (n-\tau_k)^{-2\beta}\\
&& \quad +\ O\left( \frac{h(e_k)}{\lambda_k} (n-\tau_k)^{-(\beta+1)}\log(n-\tau_k) \right).
\end{eqnarray*}
Applying this for $k < k_0$ and therefore $n > 2\tau_k$, we can use the asymptotics $(n-\tau_k)^{-\beta} \leq n^{-\beta}(1+2\beta\tau_k/n)$. Therefore
\begin{eqnarray*}
\mu_0(A_k) &=&  n^{-\beta} \frac{h(e_k)c^*}{\lambda_k} 
+ O \left(\frac{\tau_k}{\lambda_k} n^{-(\beta+1)} +  n^{-(\beta+1)} \log(n)
+ n^{-2\beta} \right).
\end{eqnarray*}
Set $c := c^* \sum_{k \geq 0} \frac{ h(e_k) }{\lambda_k}$.
Because $\sum_{k \geq 0} \frac{\tau_k}{\lambda_k} < \infty$
and $|1-e_{k_0}| \leq |1-e_0| \prod_{j=1}^{k_0} \frac{1}{\max\{ \sigma_j/\gamma , \lambda \}}$ by \eqref{eq:p1} for $k = k_0$, we obtain
\begin{eqnarray*}
|\mu_0(\varphi > n) -  c n^{-\beta}| &=& \left| \sum_{k=0}^{k_0-1} \mu_0(A_k)
 + \mu_0([e_{k_0}, 1)) - \frac{c^*}{n^{\beta}} \sum_{k=1}^\infty \frac{h(e_k)}{\lambda_k} \right| \\
&=& \frac{c^*}{n^{\beta}} \sum_{k=k_0}^\infty \frac{h(e_k)}{\lambda_k} + 
\prod_{j=1}^{k_0} \frac{ h_{\max} }{ \max\{ \sigma_j/\gamma , \lambda \}} \\
&&\quad + \  O( n^{-(\beta+1)} \log n ) + O( n^{-(\beta+1)} ) +  O( n^{-2\beta} ).
\end{eqnarray*}
Recalling the choice of $n_0$ and hence $k_0$ in
\eqref{eq:n0}, we conclude that 
$|\mu_0(\varphi > n) -  c n^{-\beta}| = O(n^{-2\beta}, n^{-(\beta+1)} \log n)$, and condition (H0) follows.
\end{proof}

\begin{lemma}\label{lemma-compind}
For the maps given by \eqref{eq:f-neutral} we have
$\Big| \frac{1}{\bar\rho} \mu_0(\varphi > n) - \mu_\tau(\tau>n) \Big|
= O(n^{-(1+\beta)})$
and (H1) holds.
\end{lemma}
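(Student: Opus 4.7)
The lemma combines (H1) with a quantitative comparison between the tails $\mu_0(\varphi > n)$ and $\mu_\tau(\tau > n)$. I would handle (H1) first as the simpler half, then refine the branch-by-branch expansion already developed in the proof of Lemma~\ref{lem:H0} to obtain the comparison.

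For (H1), the key input is that the reinduce time $\rho$ has geometric tails. The construction of the points $e_k$ gives $\{\rho = k+1\} = [e_k, e_{k+1})$, hence $\{\rho > k\} = [e_k, 1)$. Combining \eqref{eq:p1} with $\min\{\gamma/\sigma_j, 1/\lambda\} \leq 1/\lambda$ yields $|1-e_k| \leq |1-e_0|\lambda^{-k}$, so $\mu_0(\rho > k) \ll \lambda^{-k}$. Splitting $\int_{\{\varphi > n\}} \rho\,d\mu_0 = \sum_{k\geq 0}(k+1)\mu_0(A_k)$ at the cut-off $k_0$ of Lemma~\ref{lem:H0}, use $\mu_0(A_k) \ll h_{\max} n^{-\beta}/\lambda_k$ for $k < k_0$ together with the summability of $(k+1)/\lambda_k$ (from the geometric growth of $\lambda_k$ proved there), and $\mu_0(A_k) \leq \mu_0([e_k,1)) \ll \lambda^{-k}$ together with the smallness of $\lambda^{-k_0}$ guaranteed by \eqref{eq:n0} for $k \geq k_0$. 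Both contributions are $O(n^{-\beta}) = O(\mu_0(\varphi>n))$, confirming (H1).

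For the tail comparison, first observe $\mu_\tau = \mu_0$: both are absolutely continuous probability measures invariant under the ergodic Gibbs Markov induced map $f^\tau$, hence equal by uniqueness. Since the $y_n$ decrease monotonically to $e_0$, for $n$ large the level set $\{\tau > n\} = (e_0, y_n)$ lies entirely in $[e_0, e_1) = \{\rho = 1\}$, where $\varphi = \tau$; therefore $\mu_\tau(\tau > n) = \mu_0(A_0)$. The target reduces to showing $\sum_{k\geq 0}\mu_0(A_k) - \bar\rho \mu_0(A_0) = O(n^{-(1+\beta)})$.

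To prove this I would push the expansion of $\mu_0(A_k)$ from the proof of Lemma~\ref{lem:H0} to the next order, using $(n-\tau_k)^{-\beta} = n^{-\beta}(1+\beta\tau_k/n) + O(\tau_k^2 n^{-(\beta+2)})$ together with the summability of $\tau_k/\lambda_k$ to obtain $\sum_k \mu_0(A_k) = c^* n^{-\beta}\sum_k h(e_k)/\lambda_k + O(n^{-(1+\beta)})$. The identification $\sum_k h(e_k)/\lambda_k = \bar\rho h(e_0)/f'(e_0)$, needed for matching with $\bar\rho \mu_0(A_0) \sim \bar\rho h(e_0) c^*/(f'(e_0) n^\beta)$, would follow from the $f^\tau$-invariance of $\mu_0$ applied to the reinduce partition $\{[e_k, e_{k+1})\}_k$, combined with Kac's formula expressing $\bar\rho = \sum_{k \geq 0}\mu_0([e_k, 1))$. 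The main obstacle is controlling the double-big-jump contribution --- events where two of the $k+1$ return-time increments making up $\tau_{k+1}$ are simultaneously large --- whose naive bound is $O(n^{-2\beta})$, exceeding $n^{-(1+\beta)}$ in the infinite-measure regime $\beta<1$; the required improvement rests on the $C^2$ smoothness of $h$ near $e_0$ and a cancellation parallel to the one already made for the leading $cn^{-\beta}$ coefficient in the proof of Lemma~\ref{lem:H0}.
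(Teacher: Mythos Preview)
Your approach to (H1) is essentially the paper's, though the paper carries out the estimate directly via \eqref{eq:p1}--\eqref{eq:p2} and the summability of $(k+1)\tau_k^\beta/\prod_j\max\{\sigma_j/\gamma,\lambda\}$ rather than passing through a geometric bound on $\mu_0(\rho>k)$. The serious problem is in the tail comparison.

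The claim $\mu_\tau = \mu_0$ is false. You argue that both are absolutely continuous probability measures invariant under $f^\tau$, but $\mu_0$ is \emph{not} $f^\tau$-invariant: it is the invariant measure of the \emph{general} return map $f^\varphi = (f^\tau)^\rho$, and since $\rho$ is not constant, invariance under $f^\varphi$ does not imply invariance under $f^\tau$. (Also, $f^\tau$ is not Gibbs Markov here --- that is precisely why one reinduces to $f^\varphi$.) The two densities $h = d\mu_0/dx$ and $h_\tau = d\mu_\tau/dx$ differ; in particular $h(e_0)\neq h_\tau(e_0)$ in general, so your proposed identity $\sum_k h(e_k)/\lambda_k = \bar\rho\, h(e_0)/f'(e_0)$ has no reason to hold (the correct constant on the right involves $h_\tau(e_0)$). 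Consequently the reduction ``$\mu_\tau(\tau>n)=\mu_0(A_0)$'' collapses: $\mu_0(A_0)=\mu_0(\tau>n)\neq\mu_\tau(\tau>n)$, and the whole scheme in your last paragraph is aimed at proving an identity between the wrong pair of quantities.

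The paper avoids any direct computation of $\mu_\tau$ by expressing it through the subtower $\hat Y=\pi^{-1}(Y)\subset\Delta$ (Lemma~\ref{lem:mutau}): pushing $\mu_\Delta|_{\hat Y}$ down to $Y$ gives $\bar\rho\,\mu_\tau$, which yields formula \eqref{eq:mutau1} for $\frac{1}{\bar\rho}\mu_0(\varphi>n)-\mu_\tau(\tau>n)$ as an explicit sum of integrals over $\{\varphi=\tau_{k+1}\}$ and $\{\varphi>\tau_{k+1}\}$. Those integrals are then bounded term by term using the same distortion and asymptotic estimates already developed in Lemma~\ref{lem:H0}. The missing idea in your proposal is this subtower representation of $\mu_\tau$; once \eqref{eq:mutau1} is in hand, the $O(n^{-(1+\beta)})$ bound follows directly, without the delicate cancellation argument you sketch.
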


\begin{proof}
To estimate $| \frac{1}{\bar\rho} \mu_0(\varphi > n) - \mu_\tau(\tau>n)|$,
we use Lemma~\ref{lem:mutau}.
Recall that $\lambda_k = Df^{\tau_k+1}(e_k)$ and $h_{\max} = \sup_{x\in [e_0,1]} h(x)$.
Take again $n_0$ and $k_0 = k_0(n)$ as in \eqref{eq:n0} and 
assume that $n \geq n_0$, so $n > 2\tau_k$ for all $k < k_0$.

Recall that $\{ \varphi = \tau_{k+1}\} = [e_k, e_{k+1})$.
The set of $y \in [e_k, e_{k+1})$ such that $n-\tau_k < \tau(f^{t_k(y)}(y)) \leq n$
is $O\Big(\frac{|y_n-y_{n-\tau_k}|}{|y_{\sigma_{k+1}} - e_0|}\Big)$ due to
 the boundedness of distortion of
$f^{\tau_k}:[e_k,1] \to [e_0, f^{\tau_k}(1)]$. Using also
\eqref{eq:asymp} to estimate $|y_n-y_{n-\tau_k}|$, 
we obtain for the first sum in \eqref{eq:mutau1}:
$$
\sum_{k \geq 0} \int_{ \{ \varphi = \tau_{k+1}\} }  
1_{ \{ n \geq \tau > n-\tau_k\} } \circ f^{\tau_k} \, d\mu_0
\leq h_{\max}
\sum_{k \geq 1}  |y_n-y_{n-\tau_k}| \frac{ |e_{k+1}-e_k| }{ |y_{\sigma_{k+1}} - e_0| } .
$$
By boundedness of distortion, 
$\frac{ |e_{k+1}-e_k| }{ |y_{\sigma_{k+1}} - e_0| } \ll \frac{1}{\lambda_k}$ and
since $\frac{\tau_k}{\lambda_k}$ is summable by 
Lemma~\ref{lem:H0}, we conclude
\begin{align}\label{eq:gg}
\sum_{k \geq 0} \int_{ \{ \varphi = \tau_{k+1}\} } &
1_{ \{ n \geq \tau > n-\tau_k\} } \circ f^{\tau_k} \, d\mu_0
\ll h_{\max} \Big(
\sum_{k=1}^{k_0-1} \frac{(n-\tau_k)^{-\beta} - n^{-\beta}}{\lambda_k} 
+ \sum_{k\geq k_0} \frac{|1-e_0|}{\lambda_k} \Big) \nonumber \\
\le& \Big(  h_{\max}
\sum_{k = 1}^{k_0-1} \frac{2\beta \tau_k}{\lambda_k} \ + \ 
\frac{|1-e_0|}{c^*} \Big) n^{-(\beta+1)} \ll  n^{-(\beta+1)}.
\end{align}
From \eqref{eq:asymp} we derive
$\frac{| f^{\tau_k}(1) - y_{\sigma_{k+1}}|}{|f^{\tau_k}(1)-e_0|} \ll \frac1{\sigma_{k+1}}$.
Using \eqref{eq:p1},
the second sum in \eqref{eq:mutau1} can be estimated as 
\begin{eqnarray*}
\int_{\{ \varphi > \tau_{k+1}\}} 1_{\{\tau > n\}} \circ f^{\tau_k} \, d\mu_0 &=&
\sum_{\sigma_{k+1} > n} \mu_0([e_{k+1}, 1]) \\
&\le& h_{\max} \sum_{\sigma_{k+1} > n} 
\frac{| f^{\tau_k}(1) - y_{\sigma_{k+1}}|}{|f^{\tau_k}(1)-e_0|} 
\prod_{i=1}^k \frac{1}{\max (\sigma_j/\gamma, \lambda)} \\
&\ll&  \frac{h_{\max}}{n} \sum_{\sigma_{k+1} > n}
\prod_{j=0}^k \frac{1}{\max (\sigma_j/\gamma, \lambda)} \\
&\le& \frac{h_{\max}}{n} 
\prod_{j=1}^{k_0} \frac{1}{\max (\sigma_j/\gamma, \lambda)}
\sum_{k \geq k_0, \sigma_{k+1} > n}
\prod_{j=k_0+1}^k \frac{1}{\max (\sigma_j/\gamma, \lambda)} \\
\end{eqnarray*}  
because $\sigma_{k+1} \leq n$ for $k \leq k_0$
by the definition of $k_0 = k_0(n)$ in \eqref{eq:n0},
which also gives
$\prod_{j=1}^{k_0} \frac{1}{\max (\sigma_j/\gamma, \lambda)}
\leq n^{-(1+\beta)}$ for all $n \geq n_0$. 
Therefore the quantity of the previous displayed equation
is $O(n^{-(\beta+2)})$, which is clearly negligible compared to the first 
term \eqref{eq:gg}.

To check condition (H1), we continue the proof of Lemma~\ref{lem:H0}.
Boundedness of distortion of 
$f^{\sigma_{k+1}}:[y_{\sigma_{k+1}} , y_{\sigma_{k+1}-1}) \to [e_0,1)$ gives
$\frac{|A_k|}{|1-e_k|} \ll
\frac{|y_{n-\tau_k} - e_0|}{|y_{\sigma_{k+1}-1} - e_0|}
\ll \frac{ \sigma_{k+1}^\beta }{ (n-\tau_k)^\beta }$.
Combining this with \eqref{eq:p1} and \eqref{eq:p2} we get
\begin{equation}\label{eq:k}
(k+1) |A_k| \ll
 \frac{ k+1 }{ (n-\tau_k)^\beta }
 \prod_{j=1}^k \min\{ \frac{\gamma}{\sigma_j} , \frac1\lambda \} 
= \frac{1}{n^{\beta}} 
\left( \frac{ n }{(n-\tau_k)\tau_k} \right)^\beta
\frac{ (k+1) \tau_k^{\beta}}{\prod_{j=1}^k \max\{ \frac{\sigma_j}{\gamma} , \lambda \} }.
\end{equation}
Since $n \leq (n-\tau_k)\tau_k$, this gives
$$
\int_{\{\varphi > n\}} \rho(y) \, d\mu_0 \leq h_{\max}
\sum_{k \geq 0} (k+1) |A_k| \ll 
\frac{1}{n^\beta} \sum_{k \geq 0} \frac{(k+1) \tau_k^\beta}
{\prod_{j=1}^k \max\{  \frac{\sigma_j}{\gamma} , \lambda \} }.
$$
Recall that $\tau_k = \sum_{j=1}^k \sigma_j$, so the sum in this expression
is finite. Since  $n^{-\beta} = O(\mu_0(\varphi > n))$, condition (H1)
follows.
\end{proof}

\subsection{Further results for the infinite measure setting of
\eqref{eq:f-neutral}} \label{sec-moregen}

Recall that $\mu_\tau$ is the absolutely continuous probability measure  preserved by the first return map $f^\tau:Y \to Y$.
Lemma~\ref{lemma-compind} shows that the tails 
$\frac{1}{\bar\rho} \mu_0(\varphi > n)$ and 
$\mu_\tau(\tau > n)$ coincide up to $O(n^{-(1+\beta)})$. 
As shown in Lemma~\ref{lem:H0},
 $\mu_0(\varphi > n)$ satisfies (H0) b). Thus, $\mu_\tau(\tau > n)$ also satisfies (H0) b).
Moreover, using sharper estimates of $x_n$ (as 
in~\cite[Section B]{Terhesiu12}), one obtains sharper estimates for $\mu_0(\varphi > n)$; in particular,
$\mu_0(\varphi > n)$ satisfies condition (H) in~\cite{Terhesiu12},
and by Lemma~\ref{lemma-compind}, $\mu_\tau(\tau > n)$ satisfies 
condition (H) in~\cite{Terhesiu12} as well.

As mentioned in Section~\ref{sec-knownres}, the only obstruction in~\cite{MT, Terhesiu12} to obtain higher
order asymptotics of the transfer operator $L^n$ for maps such as~\eqref{eq:f-neutral}
uniformly on $(0,1]$, for $BV$ functions supported on $(0,1]$,
was the higher order expansion of $\mu_\tau(\tau > n)$. But as shown here $\mu_\tau(\tau > n)$ satisfies (H0) b) (by Lemmas~\ref{lem:H0} and~\ref{lemma-compind})
and hence, the required tail conditions of~\cite[Theorem 9.1, Theorem 11.4]{MT} and~\cite[Theorem 1.1, Proposition 1.6]{Terhesiu12}.
As a consequence, these results apply to the map~\eqref{eq:f-neutral}.

Using the fact that ~\cite[Theorem 11.4]{MT} and~\cite[Proposition 1.6]{Terhesiu12} hold for the map~\eqref{eq:f-neutral},
one also obtains~\cite[Corollary 9.10]{MT} and ~\cite[Proposition 1.7]{Terhesiu12}, which provide error rates
for the arcsine law. ( It is known that arcsine laws hold for the general class of AFN maps, see~\cite{ThalerZweimuller06}.)
As shown in~\cite{Zweimuller00, ThalerZweimuller06}, the Darling Kac law 
 holds for the general
class of AFN maps considered in~\cite{Zweimuller00}. Error rates in the Darling Kac law for maps such as
the one studied in~\cite{LiveraniSaussolVaienti99}, characterized by good higher order asymptotics 
of the tail of the first return time, were obtained in~\cite[Theorem 1.1]{Terhesiu12b}. Again, the only obstruction
in~\cite{Terhesiu12b} to show that~\cite[Theorem 1.1]{Terhesiu12b} applies to maps of the form~\eqref{eq:f-neutral},
was the lack of knowledge on the higher order expansion of $\mu_\tau(\tau > n)$. Given the information on
$\mu_\tau(\tau > n)$ obtained here, one obtains that~\cite[Theorem 1.1]{Terhesiu12b} applies to 
the setting of~\eqref{eq:f-neutral}.

\appendix
\section{Comparing general and first returns}

In this appendix we prove a result used in Lemma~\ref{lemma-compind}, namely Lemma~\ref{lem:mutau}. A consequence
of  Lemma~\ref{lemma-compind}, namely Corollary~\ref{cor:mutau}  below, allows for a direct comparison between
$\sum_{j \geq n} \frac{1}{\bar\varphi} \mu_0(\varphi > j)$ (the leading term of the correlation decay in Theorem~\ref{prop-corel-finite})
and $\sum_{j \geq n}\frac{1}{\bar\tau} \mu_\tau(\tau > j)$ (the leading term of the  correlation decay, possibly, obtained via inducing
with respect to first return time); we refer to Remark~\ref{rem:mutau} for details in the setting of \eqref{eq:f-neutral}.

\begin{lemma}\label{lem:mutau}
Suppose that $\mu_0$ and $\mu_\tau$ are equivalent
measures on $Y$ that are preserved by the general return
map $f^\varphi$ and first return map $f^\tau$, respectively, and $\bar \rho = \int_Y \rho\, d\mu_0 < \infty$. Then
\begin{align}\label{eq:mutau1}
\frac{1}{\bar\rho}   \mu_0(\varphi & > j) - 
\mu_\tau(\tau > j) \nonumber \\
=\ &
\frac{1}{\bar\rho} \sum_{k \geq 0} 
\left( \int_{\{ \varphi = \tau_{k+1}\}} 1_{\{j \geq \tau > j-\tau_k\}}  \circ f^{\tau_k}
d\mu_0 -
\int_{\{ \varphi > \tau_{k+1} \}} 1_{\{\tau > j\}}  \circ f^{\tau_k}
d\mu_0 \right).
\end{align}
\end{lemma}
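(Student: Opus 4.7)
The plan is to derive the formula by identifying $(Y,f^\tau,\mu_\tau)$ as obtained from $(Y,F=f^\varphi,\mu_0)$ via a Kakutani-type tower whose height is the reinduce time $\rho$, and then comparing the two pieces $\mu_0(\varphi>j)$ and $\mu_\tau(\tau>j)$ against this tower representation.

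First I would set up the Kakutani tower $\widetilde Y=\{(y,k):0\le k<\rho(y)\}$ with tower map $\widetilde T(y,k)=(y,k+1)$ for $k+1<\rho(y)$ and $\widetilde T(y,\rho(y)-1)=(Fy,0)$, and with measure $\widetilde\mu=\mu_0\otimes(\text{counting})$, of total mass $\bar\rho$. Since $F=(f^\tau)^\rho$, the projection $\pi(y,k):=(f^\tau)^k(y)=f^{\tau_k}(y)$ satisfies $f^\tau\circ\pi=\pi\circ\widetilde T$, so $\pi_*\widetilde\mu$ is $f^\tau$-invariant, absolutely continuous with respect to $\mu_0$, and finite; by equivalence and uniqueness (up to scaling) of the $f^\tau$-invariant probability measure in this class, $\pi_*\widetilde\mu=\bar\rho\,\mu_\tau$. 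Evaluating on $A=\{\tau>j\}$ gives the key identity
\begin{equation*}
\bar\rho\,\mu_\tau(\tau>j)=\sum_{k\ge 0}\int_{\{\rho>k\}} 1_{\{\tau>j\}}\circ f^{\tau_k}\,d\mu_0=\sum_{k\ge 0}\int_{\{\varphi>\tau_k\}} 1_{\{\tau>j\}}\circ f^{\tau_k}\,d\mu_0,
\end{equation*}
where the last equality uses $\{\rho>k\}=\{\varphi=\tau_\rho>\tau_k\}$.

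Next I would expand $\mu_0(\varphi>j)$ by partitioning according to the value of $\rho$: since $\{\varphi=\tau_{k+1}\}=\{\rho=k+1\}$ and $\tau_{k+1}=\tau_k+\tau\circ f^{\tau_k}$,
\begin{equation*}
\mu_0(\varphi>j)=\sum_{k\ge 0}\int_{\{\varphi=\tau_{k+1}\}} 1_{\{\tau>j-\tau_k\}}\circ f^{\tau_k}\,d\mu_0.
\end{equation*}
For the second identity I would split $\{\varphi>\tau_k\}=\{\varphi=\tau_{k+1}\}\sqcup\{\varphi>\tau_{k+1}\}$ in the Kakutani formula, obtaining
\begin{equation*}
\bar\rho\,\mu_\tau(\tau>j)=\sum_{k\ge 0}\int_{\{\varphi=\tau_{k+1}\}} 1_{\{\tau>j\}}\circ f^{\tau_k}\,d\mu_0+\sum_{k\ge 0}\int_{\{\varphi>\tau_{k+1}\}} 1_{\{\tau>j\}}\circ f^{\tau_k}\,d\mu_0.
\end{equation*}

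Subtracting and using the trivial identity $1_{\{\tau>j-\tau_k\}}-1_{\{\tau>j\}}=1_{\{j\ge\tau>j-\tau_k\}}$ (valid since $\tau_k\ge 0$) then yields exactly \eqref{eq:mutau1}. The only nontrivial step is the Kakutani identification in the first paragraph; for this one must rely on the standing assumption that $\mu_\tau$ is equivalent to $\mu_0$ (to conclude uniqueness up to scaling) and on $\bar\rho<\infty$ (so that $\pi_*\widetilde\mu$ is a finite measure that can legitimately be normalized to $\mu_\tau$). Once that step is in place, the rest is bookkeeping on the partition $\{\rho=k+1\}$ and a one-line rewriting of indicator functions.
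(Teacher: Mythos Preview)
Your proof is correct and follows essentially the same route as the paper's: the paper realizes your Kakutani tower $\widetilde Y$ concretely as the subtower $\hat Y=\pi^{-1}(Y)\subset\Delta$, uses the same projection $(y,k)\mapsto f^{\tau_k}(y)$ to obtain the formula $\mu_\tau(\tau>j)=\frac{1}{\bar\rho}\sum_{k\ge0}\int_{\{\varphi>\tau_k\}}1_{\{\tau>j\}}\circ f^{\tau_k}\,d\mu_0$, and then performs the same partition on $\rho$ and indicator rewriting that you do. The only cosmetic difference is that the paper borrows the ambient tower $\Delta$ rather than building the height-$\rho$ tower from scratch.
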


\begin{proof}
The set $\hat Y = \pi^{-1}(Y) =
\sqcup_{k\geq 0} \hat Y_k = \sqcup_{k \geq 0} \{ (y, \tau_k) : \rho > k\}$ 
can be considered as a subtower of $\Delta$
with dynamics
$$
T_\Delta^\tau(y,\tau_i) = \begin{cases}
(y,\tau_{i+1}) & \text{ if } 0 \leq i < \rho(y)-1,\\
(Fy,0) & \text{ if } i = \rho(y)-1,
\end{cases}
$$
see Figure~\ref{fig:tower}.
Clearly $\mu_0$ is the invariant measure of the return map to the base.
Recall that $\mu_{\Delta}$ is the ``pushed-up'' measure from $\mu_0$
onto $\Delta$. Restricted to $\hat Y$,
$\mu_{\Delta}$ is $T_\Delta^\tau$-invariant, and
$\mu_{\Delta}(\hat Y) = \int_Y \rho \, d\mu_0 = \bar \rho < \infty$.
\begin{figure} [ht]
\begin{center}
\unitlength=5mm
\begin{picture}(12,6.5)
\thicklines
\put(11,0){\line(-1,0){12}} \put(11.5,-0.2){$\hat Y_0$}
\put(11,2){\line(-1,0){8}} \put(11.5,1.8){$\hat Y_1$}
\put(11,5){\line(-1,0){2}} \put(11.5,4.8){$\hat Y_2$}
\put(11,6){\line(-1,0){0.5}}\put(11.5,5.8){$\hat Y_3$}
\put(11,0.02){\line(-1,0){12}} 
\put(11,2.02){\line(-1,0){8}} 
\put(11,5.02){\line(-1,0){2}}
\put(11,6.02){\line(-1,0){0.5}}
 
\thinlines
 \put(-1,-0.2){$\underbrace{\qquad\qquad\quad}_{\varphi = \tau_1}$}
 \put(3,-0.2){$\underbrace{\qquad\qquad\qquad\quad}_{\varphi = \tau_2}$}
\put(8.8,-0.2){$\underbrace{\qquad}_{\varphi = \tau_3}$}
\put(11,0.5){\line(-1,0){11}}
\put(11,1){\line(-1,0){10}} 
\put(11,1.5){\line(-1,0){9}}
\put(11,2.5){\line(-1,0){7}} 
\put(11,3){\line(-1,0){6}} 
\put(11,3.5){\line(-1,0){5}}
\put(11,4){\line(-1,0){4}} 
\put(11,4.5){\line(-1,0){3}} 
\put(11,5.5){\line(-1,0){1}}
\put(11,6.5){\line(-1,0){0.2}}
\end{picture}
\end{center}
\caption{\label{fig:tower} The tower $\Delta$ and $\hat Y$ in between
with the bold-face levels.}
\end{figure}
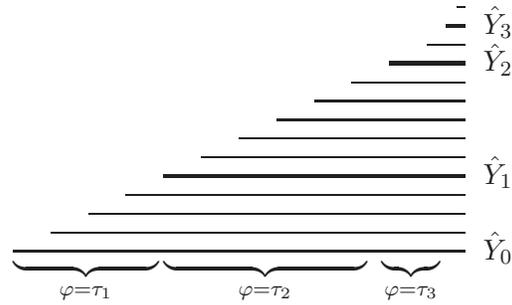 
The projection $\hat\pi:\hat Y \to Y$, $(y,k) \mapsto f^{\tau_k}(y)$
pushes $\mu_{\Delta}$ down to an $f^\tau$-invariant measure, which when normalized
has the formula
$\mu_\tau(A) = \frac{1}{\bar \rho} 
\sum_{k \geq 0} \mu_0(\hat \pi^{-1}(A) \cap \hat Y_k)$.
Applying this to $A = \{ \tau > n\}$ and recalling that 
$\hat Y_k = \{ \varphi > \tau_k\} \times \{ \tau_k \}$ gives
\begin{equation}\label{eq:mutau}
\mu_\tau(\tau > n) 
=  \frac{1}{\bar \rho} \sum_{k \geq 0} \int_{\{\varphi > \tau_k\}}
1_{\{ \tau > n\}} \circ f^{\tau_k}(y) \, d\mu_0(y).
\end{equation}
Next we specify the set $\{ \varphi > n\}$.
For each $y \in Y$, pick $k = \rho(y)-1$, so that $(y,k)$ is at the top level
of the subtower $\hat Y$, so $\varphi(y) = \tau_k(y) + \tau(f^{\tau_k}(y))$.
Therefore
\begin{eqnarray}\label{eq:muphi}
\mu_0(\varphi > n) &=&
\sum_{k \geq 0} \mu_0( \varphi = \tau_{k+1} \wedge \tau \circ f^{\tau_k} > n - \tau_k) \nonumber \\
&=& \sum_{k \geq 0} \int_{\{ \varphi = \tau_{k+1}\}}
1_{\{\tau > n-\tau_k\}} \circ f^{\tau_k} \, d\mu_0.
\end{eqnarray}
Combining \eqref{eq:mutau} and \eqref{eq:muphi} we get:
\begin{align*}
\frac{1}{\bar\rho}   \mu_0(\varphi & > j) - 
\mu_\tau(\tau > j) \nonumber \\
=\ &
\frac{1}{\bar\rho} \sum_{k \geq 0} 
\left( \int_{\{ \varphi = \tau_{k+1}\}} 1_{\{\tau > j-\tau_k\}}  \circ f^{\tau_k}
d\mu_0 -
\int_{ \{ \varphi > \tau_k\} } 1_{\{\tau > j\}  } \circ f^{\tau_k}
d\mu_0 \right) \nonumber \\
=\ &
\frac{1}{\bar\rho} \sum_{k \geq 0} 
\left( \int_{\{ \varphi = \tau_{k+1}\}} 1_{\{j \geq \tau > j-\tau_k\}}  \circ f^{\tau_k}
d\mu_0 -
\int_{\{ \varphi > \tau_{k+1} \}} 1_{\{\tau > j\}}  \circ f^{\tau_k}
d\mu_0 \right),
\end{align*}
proving~\eqref{eq:mutau1}.
\end{proof}

We have the following corollary in the finite measure setting, for which we recall
that
$\bar \varphi = \int_Y \varphi \, d\mu_0$
and  $\bar\tau = \int_Y \tau \, d\mu_\tau$ are finite.

\begin{cor}\label{cor:mutau}
Suppose that $\mu_0$ and $\mu_\tau$ are equivalent
measures on $Y$ that are preserved by the general return
map $f^\varphi$ and first return map $f^\tau$, respectively.
Then 
$$
\left| \sum_{j \geq n} \frac{1}{\bar\varphi} \mu_0(\varphi > j) -
\frac{1}{\bar\tau} \mu_\tau(\tau > j) \right| \leq 
 \frac{1}{\bar\varphi} \sum_{k \geq 1} \int_{\{ \varphi = \tau_{k+1} > n\}}
\tau_k\, d\mu_0.
$$
\end{cor}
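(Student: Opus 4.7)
The plan is to start from Lemma~\ref{lem:mutau}, match the two normalizations via Abramov's identity, carry out the $\sum_{j\geq n}$ termwise, and finally prove a sharp pointwise inequality on each level set $\{\varphi=\tau_\ell\}$. First I would verify the identity $\bar\varphi=\bar\rho\,\bar\tau$: unwinding the pushdown formula $\mu_\tau(A)=\frac{1}{\bar\rho}\sum_k \mu_0(\hat\pi^{-1}(A)\cap\hat Y_k)$ used in the proof of Lemma~\ref{lem:mutau} gives $\bar\tau=\int_Y\tau\,d\mu_\tau = \frac{1}{\bar\rho}\int_Y\sum_{k=0}^{\rho-1}(\tau_{k+1}-\tau_k)\,d\mu_0=\frac{1}{\bar\rho}\int_Y\tau_\rho\,d\mu_0=\bar\varphi/\bar\rho$. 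Dividing Lemma~\ref{lem:mutau} through by $\bar\tau$ therefore yields
$$
\frac{1}{\bar\varphi}\mu_0(\varphi>j)-\frac{1}{\bar\tau}\mu_\tau(\tau>j)=\frac{1}{\bar\varphi}\sum_{k\geq 0}\bigl(I_{j,k}^{(1)}-I_{j,k}^{(2)}\bigr),
$$
with $I_{j,k}^{(1)},I_{j,k}^{(2)}$ the two integrals on the right of \eqref{eq:mutau1}.

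Next I would compute the $j$-sum for each $k$ explicitly. On $\{\varphi=\tau_{k+1}\}$ we have $\tau\circ f^{\tau_k}=\varphi-\tau_k$, so the indicator $1_{\{j\geq\tau>j-\tau_k\}}\circ f^{\tau_k}$ is supported on the interval $j\in[\varphi-\tau_k,\varphi-1]$; intersecting with $j\geq n$ gives $\sum_{j\geq n}I_{j,k}^{(1)}=\int_{\{\varphi=\tau_{k+1}>n\}}\min(\tau_k,\varphi-n)\,d\mu_0$, which vanishes for $k=0$. Similarly $\sum_{j\geq n}I_{j,k}^{(2)}=\int_{\{\varphi>\tau_{k+1}\}}(\tau_{k+1}-\tau_k-n)_+\,d\mu_0$. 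Using $\{\varphi>\tau_{k+1}\}=\bigsqcup_{\ell\geq k+2}\{\varphi=\tau_\ell\}$ to reorganize the $k$-sum by the value of $\ell=\rho$, and writing $\sigma_i:=\tau_i-\tau_{i-1}$, the full right-hand side becomes
$$
\frac{1}{\bar\varphi}\sum_{\ell\geq 2}\int_{\{\varphi=\tau_\ell\}}\Phi_\ell\,d\mu_0,\qquad
\Phi_\ell:=1_{\{\varphi>n\}}\min(\tau_{\ell-1},\varphi-n)-\sum_{i=1}^{\ell-1}(\sigma_i-n)_+.
$$

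The crux, which I expect to be the only real obstacle, is the pointwise bound $0\leq \Phi_\ell\leq \tau_{\ell-1}\cdot 1_{\{\varphi>n\}}$ on $\{\varphi=\tau_\ell\}$. The upper bound is immediate from $\min(\tau_{\ell-1},\varphi-n)\leq \tau_{\ell-1}$ and the non-negativity of each summand $(\sigma_i-n)_+$. For the lower bound, when $\varphi\leq n$ all $\sigma_i\leq \varphi\leq n$ so both terms vanish. When $\varphi>n$, one has $\sum_{i=1}^{\ell-1}(\sigma_i-n)_+\leq \sum_{i=1}^{\ell-1}\sigma_i=\tau_{\ell-1}$, and separately, letting $m=\#\{i<\ell:\sigma_i>n\}$,
$$
\varphi-n-\sum_{i=1}^{\ell-1}(\sigma_i-n)_+=\sum_{i<\ell,\,\sigma_i\leq n}\sigma_i+mn+\sigma_\ell-n,
$$
which is $\geq \sigma_\ell\geq 1>0$ if $m\geq 1$, and equals $\varphi-n>0$ if $m=0$. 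Hence $\sum(\sigma_i-n)_+\leq \min(\tau_{\ell-1},\varphi-n)$, giving $\Phi_\ell\geq 0$. Integrating and using $\tau_{\ell-1}\cdot 1_{\{\varphi=\tau_\ell>n\}}$ as an upper envelope yields, after relabeling $k=\ell-1$, the stated inequality (in fact without an absolute value, since the difference is non-negative).
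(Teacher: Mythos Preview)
Your proof is correct. Both you and the paper begin with the Abramov identity $\bar\varphi=\bar\rho\,\bar\tau$ and Lemma~\ref{lem:mutau}, but the executions diverge thereafter. The paper bounds the two sums of integrals in~\eqref{eq:mutau1} separately and rather crudely, invoking along the way the property that $\tau$ is constant on $f^{\tau_k}(\{\varphi>\tau_{k+1}\})$, which holds in the AFN setting of Section~\ref{sec-AFN} but is not a consequence of the hypotheses of the corollary as stated. You instead evaluate each $j$-sum exactly, reorganize by the level sets $\{\varphi=\tau_\ell\}$, and prove the sharp pointwise inequality $0\le\Phi_\ell\le\tau_{\ell-1}\,1_{\{\varphi>n\}}$. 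This buys you two things: the argument is fully general and does not need constancy of $\tau\circ f^{\tau_k}$ on $\{\varphi>\tau_{k+1}\}$; and you in fact establish that the quantity inside the absolute value is non-negative, so the modulus sign can be dropped. Your approach is therefore a genuine improvement in both scope and information, at the modest cost of the combinatorial verification that $\Phi_\ell\ge 0$.
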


\begin{remark}\label{rem:mutau} In the setting of
\eqref{eq:f-neutral}, we can replace $k+1$ with $\tau_k$ in~\eqref{eq:k}
and obtain that $\sum_{k \geq 1} \int_{\{ \varphi = \tau_{k+1} > n\}}
\tau_k\, d\mu_0=O(\mu_0(\varphi > n))$.
This together with Corollary~\ref{cor:mutau}
implies that the leading term in Theorem~\ref{prop-corel-finite} applied to \eqref{eq:f-neutral}
matches the leading term of the correlation decay
results in~\cite{HuVaienti, MT}; although not exactly the same, the difference in the main terms
can be absorbed in the error term.
\end{remark}

\begin{proof}
Observe that
\begin{eqnarray*}\label{eq:varphitaurho}
\bar\varphi &=& \int_Y \varphi \, d\mu_0  = \mu_{\Delta}(\Delta)
= \sum_{k=0}^\infty \sum_{j=0}^{\tau_{k+1}-\tau_k-1} \mu_{\Delta}(Y_j)
= \sum_{k=0}^\infty \int_{\hat Y_k} \tau \circ f^{\tau_k} \, d\mu_{\Delta} 
\nonumber \\
&=& \int_Y \tau \, d\mu_{\Delta} \circ \pi^{-1} = \bar\rho \int_Y \tau \, d\mu_\tau
= \bar\rho \cdot \bar\tau.
\end{eqnarray*}
Therefore the statement of the corollary is equivalent to
\begin{equation}\label{eq:H1b}
\left| \sum_{j \geq n} \frac{1}{\bar\rho} \mu_0(\varphi > j) -
\mu_\tau(\tau > j) \right| \leq 
 \frac{1}{\bar\rho} \sum_{k \geq 1} \int_{\{ \varphi = \tau_{k+1} > n\}}
\tau_k\, d\mu_0.
\end{equation}
Continuing from Lemma~\ref{lem:mutau}, and since $\tau$ is constant
on $f^{\tau_k}(\{\varphi > \tau_{k+1}\})$, there is at most one $j \geq n$
for which $ 1_{\{\tau > j\}}  \circ f^{\tau_k} = 1$.
Therefore, using $\tau_{k+1} = \tau \circ f^{\tau_k} + \tau_k$, 
we can sum the second sum of the integrals in \eqref{eq:mutau1} over
$j \geq n$ and compute
\begin{align*}
\sum_{j \geq n} \sum_{k \geq 0} &
\int_{\{ \varphi > \tau_{k+1} \}} 1_{\{\tau > j\}}  \circ f^{\tau_k}\, d\mu_0
\leq \sum_{k \geq 0} \mu_0(\varphi > \tau_{k+1} > \tau_k + n) \\
\le& \sum_{k \geq 1} \mu_0(\varphi \geq \tau_{k+1} > n) 
\leq \sum_{k \geq 1} k \mu_0(\varphi = \tau_{k+1} > n)
= \sum_{k \geq 1} \int_{\{ \varphi = \tau_{k+1} > n\}} k \, d\mu_0,
\end{align*}
which is definitely less than the first sum of integrals in \eqref{eq:mutau1}, which we will
estimate now.

For the first sum of integrals in \eqref{eq:mutau1}, 
there are at most $\tau_k$ values of
$j \geq n$ making the indicator function $1$. 
Using again that $\tau_{k+1} = \tau \circ f^{\tau_k} + \tau_k$, 
we can sum the first sum of integrals in \eqref{eq:mutau1} over
$j \geq n$ and compute
\begin{eqnarray*}
\sum_{j \geq n} \sum_{k \geq 0}
\int_{\{ \varphi = \tau_{k+1}\}} 1_{\{j \geq \tau > j-\tau_k\}}  \circ f^{\tau_k}\,
d\mu_0
& \le& \sum_{k \geq 0} 
\int_{\{ \varphi = \tau_{k+1}\}}\tau_k\, 1_{\{\tau > n-\tau_k\}}  \circ f^{\tau_k}
 \, d\mu_0 \\
& \le& \sum_{k \geq 1} \int_{\{ \varphi = \tau_{k+1} > n \}}\tau_k \, d\mu_0.
\end{eqnarray*}
This proves \eqref{eq:H1b} and hence the corollary.
\end{proof}

\end{document}